\newtheorem{theorem}{Theorem}[section]
\newtheorem{prop}[theorem]{Proposition}
\newtheorem{cor}[theorem]{Corollary}
\newtheorem{lemma}[theorem]{Lemma}
\newtheorem{example}[theorem]{Example}
\newtheorem{remark}[theorem]{Remark}
\newtheorem{result}{Theorem}
\newtheorem{resultcor}[result]{Corollary}
\numberwithin{equation}{section}
\newcommand{\into}{\hookrightarrow}
\newcommand{\Tr}{\text{Tr}}
\newcommand{\tD}{\tilde{\Delta}}
\newcommand{\uv}{\underline}
\newcommand{\tr}{\text{tr}}
\newcommand{\id}{\text{id}}
\newcommand{\Ad}{\text{Ad}}
\newcommand{\I}{\text{I}}
\DeclareMathOperator{\Aff}{Aff}
\DeclareMathOperator{\KT}{KT}
\newcommand{\dlim}{\underset{\to}{\lim}}
\newcommand{\ov}{\overline}
\newcommand{\bN}{\mathbb{N}}
\newcommand{\bR}{\mathbb{R}}
\newcommand{\bZ}{\mathbb{Z}}
\newcommand{\bQ}{\mathbb{Q}}
\newcommand{\bC}{\mathbb{C}}
\newcommand{\bT}{\mathbb{T}}
\newcommand{\cS}{\mathcal{S}}
\newcommand{\cZ}{\mathcal{Z}}
\newcommand{\bh}{\mathcal{B}(\mathcal{H})}
\newcommand{\fX}{\mathfrak{X}}
\newcommand{\fY}{\mathfrak{Y}}
\newcommand{\fT}{\mathfrak{T}}
\begin{document}
\title{Unitary groups, $K$-theory and traces}
\author{Pawel Sarkowicz}
\email{\href{mailto:psark007@uottawa.ca}{psark007@uottawa.ca}}
\address{Department of Mathematics and Statistics, University of Ottawa, 75 Laurier Ave. East, Ottawa, ON, K1N 6N5 Canada}

  \begin{abstract}
    We show that continuous group homomorphisms between uni\hyp{}tary groups of unital C*-algebras induce maps between spaces of continuous real-valued affine functions on the trace simplices. Under certain $K$-theoretic regularity conditions, these maps can be seen to commute with the pairing between $K_0$ and traces. If the homomorphism is contractive and sends the unit circle to the unit circle, the map between spaces of continuous real-valued affine functions can further be shown to be unital and positive (up to a minus sign).
  \end{abstract}

  \maketitle
  \tableofcontents

  \section{Introduction}

\renewcommand*{\thetheorem}{\arabic{chapter}.\Alph{theorem}}
Unitary groups of C*-algebras have been long studied, and for many classes of operator algebras they form a complete invariant. In \cite{Dye53}, Dye studied the unitary group isomorphism problem between non-atomic W*-algebras, with the assumption of \emph{weak bicontinuity} of the isomorphism. He later showed that the unitary group, this time as an algebraic object, determined the type of a factor \cite{Dye55} (except for type $\I_{2n}$). He showed that such group isomorphisms were the restrictions of a *-isomorphism or a conjugate linear *-isomorphism multiplied by a possibly discontinuous character (\cite[Appendix A]{Booth98} gives exposition). Sakai generalized Dye's results to show that any uniformly continu\hyp{}ous unitary group isomorphism between AW*-factors comes from a *-isomor\hyp{}phism or conjugate-linear *-isomorphism \cite{Sakai55} (see also \cite{Yen56} for general AW*-algebras which have no component of type $I_n$). 

Dye's method was generalized to large classes of real rank zero C*-algebras by Al-Rawashdeh, Booth and Giordano in \cite{AlBoothGiordano12}, where they applied the method to obtain induced maps between $K$-theory, with a general linear variant being done by Giordano and Sierakowski in \cite{GiordanoSierakowski16}. The stably finite and purely infinite cases were handled separately. The unital, simple AH-algebras of slow dimension growth and of real rank zero were classified by the topological group isomorphism class of their unitary groups (or general linear groups), and the unital, simple, purely infinite UCT algebras were classified via the algebraic isomorphism classes of their unitary groups (or general linear groups). These results made use of the abundance of projections in real rank zero C*-algebras (at least to show there were isomorphic $K_0$-groups), and made use of the Dadarlat-Elliott-Gong \cite{Dadarlat95,Gong97} and Kirchberg-Phillips \cite{Phillips00} classification theorems respectively (see Theorems 3.3.1 and 8.4.1 of \cite{RordamBook} for each respective case).

In \cite{Paterson83}, it was proven by Paterson that two unital C*-algebras are iso\hyp{}morphic if and only there is an isometric isomorphism of the unitary groups which acts as the identity on the circle. In a similar vein, the metric structure of the unitary group has also played a role in determining the Jordan *-algebra structure on C*-algebras. In \cite{HatoriMolnar14}, Hatori and Moln{\'a}r showed that two unital C*-algebras are Jordan *-isomorphic if and only if their unitary groups are isometric as metric spaces, not taking into account any algebraic structure.

Chand and Robert have shown in \cite{ChandRobert23} that if $A$ and $B$ are prime traceless C*-algebras with full square zero elements such that $U^0(A)$, the subgroup of unitaries which are path connected to the identity, is algebraically isomorphic to $U^0(B)$, then $A$ is either isomorphic or anti-isomorphic to $B$. In fact, the group isomorphism is the restriction of a *-isomorphism or anti-*-isomorphism which follows from the fact that unitary groups associated to these C*-algebras have certain automatic continuity properties that allow one to use character\hyp{}izations of \emph{commutativity preserving maps} \cite{Bresar93} (see \cite{AraMathieubook}). Chand and Robert also show that if $A$ is a unital separable C*-algebra with at least one tracial state, then $U^0(A)$ admits discontinuous automorphisms. Thus the existence of traces is an obstruction to classification via algebraic structure on the unitary groups -- at least an obstruction to unitary group homomorphisms being the restrictions of *-homomorphisms or anti-*-homomorphisms.

In this paper, we show that uniformly continuous unitary group homo\hyp{}morphisms yield maps between traces which have several desirable $K$-theoretic properties -- especially under stricter continuity assumptions. Namely, that the homomorphism sends the circle to the circle and is contractive, which would be automatic if it had a lift to a *-homomorphism or conjugate-linear *-homomorphism.

We state our main results. Recall that if $A$ is a unital C*-algebra, $T(A)$ denotes the simplex of tracial states and $\Aff T(A)$ is the real Banach space of continuous affine functions $T(A) \to \bR$. 

\begin{result}[Corollary \ref{cor:pairing}]
Let $A,B$ be unital C*-algebras. If $\theta: U^0(A) \to U^0(B)$ is a uniformly continuous group homomorphism, then there exists a bounded $\bR$-linear map $\Lambda_\theta: \Aff T(A) \to \Aff T(B)$ such that
\begin{equation}
\begin{tikzcd}
\pi_1(U^0(A)) \arrow[r, "\tD^1_A"] \arrow[d, "\pi_1(\theta)"'] & \Aff T(A) \arrow[d, "\Lambda_\theta"] \\
\pi_1(U^0(B)) \arrow[r, "\tD^1_B"']                            & \Aff T(B)                     
\end{tikzcd}
\end{equation}
commutes.
\end{result}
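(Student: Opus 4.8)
The plan is to build $\Lambda_\theta$ out of the local (``Lie algebra'') behaviour of $\theta$ and then read off the commuting square from the explicit formula for the de la Harpe--Skandalis determinant on exponential arcs. Write $A_{\mathrm{sa}}$ for the self-adjoint part and $a\mapsto\hat a$ for the canonical map $A_{\mathrm{sa}}\to\Aff T(A)$, $\hat a(\tau)=\tau(a)$. For $a\in A_{\mathrm{sa}}$ the curve $t\mapsto\exp(ita)$ is a norm-continuous one-parameter subgroup of $U^0(A)$, so $t\mapsto\theta(\exp(ita))$ is a norm-continuous one-parameter subgroup of $U^0(B)$ and therefore equals $\exp(itL(a))$ for a unique $L(a)\in B_{\mathrm{sa}}$. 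First I would record the elementary properties of $L\colon A_{\mathrm{sa}}\to B_{\mathrm{sa}}$: homogeneity is immediate from reparametrising $t$; additivity follows from the Trotter product formula together with the continuity of $\theta$ (apply $\theta$ to $\exp(it(a+a'))=\lim_n(\exp(ita/n)\exp(ita'/n))^n$ and use multiplicativity); and boundedness, $\|L(a)\|\le C\|a\|$, follows from uniform continuity of $\theta$ near the identity after comparing $\|\exp(ia)-1\|$ with $\|a\|$ and rescaling. Thus $L$ is a bounded $\bR$-linear map.

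The next step is to show that $L$ descends to traces. Here the key point is that $L$ is a homomorphism for the Lie bracket: for $h,k\in A_{\mathrm{sa}}$ one has $L(i[h,k])=i[L(h),L(k)]$. I would prove this by applying $\theta$ to the group commutator $\exp(i\sqrt t\,h)\exp(i\sqrt t\,k)\exp(-i\sqrt t\,h)\exp(-i\sqrt t\,k)$, which by Baker--Campbell--Hausdorff agrees to leading order with $\exp(it\,i[h,k])$; multiplicativity of $\theta$ turns it into the corresponding commutator of the images $\exp(\pm i\sqrt t\,L(h)),\exp(\pm i\sqrt t\,L(k))$, and comparing generators gives the identity. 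Consequently, for every $\tau'\in T(B)$ the functional $\tau'\circ L$ vanishes on self-adjoint commutators, hence is a (bounded, signed) trace on $A$; equivalently $L$ carries $\{a:\hat a=0\}$ into $\{b:\hat b=0\}$. Therefore $\hat a\mapsto\widehat{L(a)}$ is well defined and extends to a bounded $\bR$-linear map $\Lambda_\theta\colon\Aff T(A)\to\Aff T(B)$.

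It remains to verify the square. I would represent a class in $\pi_1(U^0(A))$ by a piecewise-exponential loop, i.e.\ a concatenation of arcs $t\mapsto u_j\exp(ita_j)$ with $a_j\in A_{\mathrm{sa}}$; every homotopy class has such a representative since $\exp$ is a local homeomorphism and $\tD^1_A$ is a homotopy invariant of loops. For an exponential arc the determinant is computed directly: since $\frac{d}{dt}(u_j\exp(ita_j))\cdot(u_j\exp(ita_j))^{-1}=u_j(ia_j)u_j^{-1}$ and traces are conjugation invariant, the arc contributes $\tfrac{1}{2\pi}\widehat{a_j}$, so $\tD^1_A[\gamma]=\tfrac{1}{2\pi}\sum_j\widehat{a_j}$. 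Because $\theta$ is multiplicative and $\theta(\exp(ita_j))=\exp(itL(a_j))$, the loop $\theta\circ\gamma$ is the concatenation of the arcs $t\mapsto\theta(u_j)\exp(itL(a_j))$, whence $\tD^1_B[\theta\circ\gamma]=\tfrac{1}{2\pi}\sum_j\widehat{L(a_j)}=\Lambda_\theta\big(\tD^1_A[\gamma]\big)$. Since $\pi_1(\theta)[\gamma]=[\theta\circ\gamma]$, this is exactly the asserted commutativity.

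The main obstacle I expect is the passage from the merely pointwise data $\{\theta(\exp(ita))\}$ to a genuine bounded operator respecting the trace pairing: making the Trotter and Baker--Campbell--Hausdorff limits legitimate under the sole hypothesis of uniform continuity (so that $\theta$ may be interchanged with these norm limits), and verifying that $L$ is bounded rather than merely linear. Once $L$ is known to be a bounded Lie homomorphism, the descent to $\Aff T$ and the determinant computation are formal; the subtlety is concentrated in upgrading uniform continuity of $\theta$ into the linearity, boundedness and bracket-compatibility of $L$, together with the standard but necessary fact that every element of $\pi_1(U^0(A))$ is represented by a piecewise-exponential loop.
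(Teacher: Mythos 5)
Your construction has the same skeleton as the paper's: Stone's theorem produces $L = S_\theta$ (Lemma \ref{lem:stone-theorem-cons}), linearity and boundedness follow from the Lie product formula and continuity, $L$ descends to a bounded $\bR$-linear map $\Lambda_\theta$ on $\Aff T(\cdot)$, and the square is checked on piecewise-exponential representatives of loops, exactly as in Proposition \ref{rem:cts-to-piece-wise-smooth}, Proposition \ref{prop:pre-det-commuting-diagrams} and Corollary \ref{cor:pairing}. The one genuinely different ingredient is how you make $\tau'\circ L$ tracial. You prove that $L$ preserves brackets, $L(i[h,k]) = i[L(h),L(k)]$, and then let traces kill brackets. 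The paper instead proves conjugation equivariance: applying $\theta$ to the identity $u e^{2\pi i t a}u^* = e^{2\pi i t uau^*}$ of one-parameter groups gives $S_\theta(uau^*) = \theta(u)S_\theta(a)\theta(u)^*$ by Stone uniqueness, and invariance of $\tau'$ under $\Ad_{\theta(u)}$ finishes (Proposition \ref{prop:induced-aff-map}). The conjugation argument is cheaper --- it uses only multiplicativity of $\theta$ applied to an exact identity between one-parameter groups, with no limit formulas --- while your bracket argument proves more (bracket preservation, which the paper records but never uses for $S_\theta$, and which becomes essential in the general linear setting of Proposition \ref{prop:gl-G-theta-lift}).

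That bracket step, as you have sketched it, is the one real gap: the group commutator $e^{i\sqrt{t}h}e^{i\sqrt{t}k}e^{-i\sqrt{t}h}e^{-i\sqrt{t}k}$ is \emph{not} a one-parameter group in $t$, so ``comparing generators'' is not available --- $L$ is only defined on genuine one-parameter groups, and BCH ``to leading order'' does not let you slide $\theta$ past an error term. The repair is the exact limit formula $e^{[x,y]} = \lim_m\bigl(e^{-x/m}e^{-y/m}e^{x/m}e^{y/m}\bigr)^{m^2}$ (a variant of \cite[Theorem 2.11]{HallBook}): with $x = i\sqrt{t}\,h$, $y = i\sqrt{t}\,k$ each approximant is a finite product of exponentials, so multiplicativity of $\theta$ converts it into the corresponding approximant for $L(h),L(k)$, continuity of $\theta$ passes the limit, and Stone uniqueness then yields $L(i[h,k]) = i[L(h),L(k)]$; this is precisely how the paper handles the same issue for $G_\theta$ in Proposition \ref{prop:gl-G-theta-lift}. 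With that repair (or with the conjugation trick substituted in), your proof is complete.
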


Here $\pi_1(\theta)$ is the map between fundamental groups induced by $\theta$, and, for a C*-algebra $A$, $\tD^1_A$ is the \emph{pre-determinant} (used in the definition of the de la Harpe-Skandalis determinant associated to the universal trace) that takes a piece-wise smooth path in $U^0(A)$
beginning at the unit to an element of $\Aff T(A)$. See Section \ref{section:dlHS-det} for details.

Recall that the $K_0$-group of a unital C*-algebra can be identified with the fundamental group $\pi_1(U_{\infty}^0(A))$. Restricting to C*-algebras with sufficient $K_0$-regularity -- by this we mean C*-algebras whose $K_0$-group can be realized as loops in the connected component of its unitary group -- we get a map between $K_0$-groups and a map between spaces of continuous real-valued affine functions on the trace simplex which commute with the pairing.

\begin{resultcor}[Corollary \ref{cor:pairing}]
Let $A,B$ be unital C*-algebras such that the canonical maps
\begin{equation}\label{eq:cor3-hyp-iso}
\pi_1(U^0(A)) \to K_0(A) \text{ and }\pi_1(U^0(B)) \to K_0(B)
\end{equation}
are isomorphisms. If $\theta: U^0(A) \to U^0(B)$ is a continuous group homomorphism then there exists a bounded linear map $\Lambda_\theta: \Aff T(A) \to \Aff T(B)$ such that
\begin{equation}
\begin{tikzcd}
K_0(A) \arrow[r, "\rho_A"] \arrow[d, "K_0(\theta)"'] & \Aff T(A) \arrow[d, "\Lambda_\theta"] \\
K_0(B) \arrow[r, "\rho_B"']                            & \Aff T(B)                     
\end{tikzcd}
\end{equation}
commutes, where $K_0(\theta)$ is the map induced by $\pi_1(\theta)$ together with the isomorphisms of (\ref{eq:cor3-hyp-iso}).
\end{resultcor}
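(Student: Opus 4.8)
The plan is to deduce the statement from Theorem~A (the pre-determinant square) by first upgrading the hypothesis of mere continuity to uniform continuity, and then translating the $\pi_1$-level square into a $K_0$-level square via the identification of the pre-determinant with the tracial pairing. First I would note that the operator-norm metric $d(u,v)=\|u-v\|$ on $U^0(A)$ is bi-invariant, since for any unitary $w$ one has $\|wu-wv\|=\|w(u-v)\|=\|u-v\|$ and likewise on the right. For a group with a bi-invariant metric, continuity of a homomorphism at the identity already forces uniform continuity: given $\varepsilon>0$, pick $\delta>0$ with $\|\theta(x)-1\|<\varepsilon$ whenever $\|x-1\|<\delta$; then $\|u-v\|<\delta$ yields $\|uv^\ast-1\|=\|u-v\|<\delta$, hence $\|\theta(u)\theta(v)^\ast-1\|<\varepsilon$ and so $\|\theta(u)-\theta(v)\|<\varepsilon$. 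Thus $\theta$ is automatically uniformly continuous, and Theorem~A supplies a bounded $\bR$-linear map $\Lambda_\theta:\Aff T(A)\to\Aff T(B)$ with $\Lambda_\theta\circ\tD^1_A=\tD^1_B\circ\pi_1(\theta)$.

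Next I would invoke the behaviour of the pre-determinant on loops from Section~\ref{section:dlHS-det}. Writing $c_A:\pi_1(U^0(A))\to K_0(A)$ and $c_B:\pi_1(U^0(B))\to K_0(B)$ for the canonical maps (each factoring through $\pi_1(U^0_\infty)\cong K_0$), the key identity is that the universal-trace pre-determinant computes the pairing, i.e. $\tD^1_A=\rho_A\circ c_A$ and $\tD^1_B=\rho_B\circ c_B$ up to one fixed sign convention; concretely, the loop $t\mapsto\exp(2\pi i t p)$ attached to a projection $p$ is sent by $\tD^1_A$ to $\tau\mapsto\tau(p)=\rho_A([p])(\tau)$. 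Under the standing hypothesis that $c_A$ and $c_B$ are isomorphisms, this gives $\rho_A=\tD^1_A\circ c_A^{-1}$ and lets me set $K_0(\theta):=c_B\circ\pi_1(\theta)\circ c_A^{-1}$, which is the map ``induced by $\pi_1(\theta)$ together with the isomorphisms.''

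The conclusion is then a diagram chase,
\[
\Lambda_\theta\circ\rho_A=\Lambda_\theta\circ\tD^1_A\circ c_A^{-1}=\tD^1_B\circ\pi_1(\theta)\circ c_A^{-1}=\rho_B\circ c_B\circ\pi_1(\theta)\circ c_A^{-1}=\rho_B\circ K_0(\theta),
\]
which is precisely the asserted commutativity; note that because the \emph{same} sign convention governs both $A$ and $B$, any overall sign cancels, so the square commutes on the nose. I expect all the genuine content to be front-loaded into Theorem~A and into the pre-determinant/pairing identity of Section~\ref{section:dlHS-det}. The only remaining obstacle is bookkeeping: confirming that the canonical map $\pi_1(U^0(A))\to\pi_1(U^0_\infty(A))\cong K_0(A)$ is exactly the map $c_A$ through which $\tD^1_A$ factors, so that inverting $c_A$ is legitimate and the identity $\tD^1_A=\rho_A\circ c_A$ holds at the level of the unstabilized fundamental group rather than only after stabilization.
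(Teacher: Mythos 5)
Your proposal is correct and takes essentially the same route as the paper: the paper likewise deduces the $K_0$-square from the $\pi_1$-level square (the first half of Corollary \ref{cor:pairing}), disposing of the bookkeeping you flag via the stabilization identity $\tD_\tau^m(\xi \oplus 1_{m-n}) = \tD_\tau^n(\xi)$ together with the projection-loop computation $\tD_\tau(\xi_p)=\tau(p)$, which is exactly your identity $\tD_A^1 = \rho_A \circ c_A$. Your bi-invariance argument upgrading continuity at the identity to uniform continuity is a sound extra detail that the paper leaves implicit (its standing assumption in Section \ref{section:ugh-ctugh-and-traces} is uniform continuity, while the introduction's statement says only ``continuous'').
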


C*-algebras satisfying the above hypothesis are quite common -- for example C*-algebras having stable rank one \cite{Rieffel87} or that are $\cZ$-stable \cite{Jiang97} have this property. Viewing $\Aff T(A)$ and $\Aff T(B)$ as partially ordered real Banach spaces (under the uniform norm) with order units, it is not however true that $\Lambda_\theta$ is unital or positive (see Example \ref{example:nonpositive-example}). This is remedied by adding stricter continuity assumptions on the homomorphism $\theta$ (and possibly by replacing $\Lambda_\theta$ with $-\Lambda_\theta$).

When $\theta: U(A) \to U(B)$ is contractive, injective and sends the circle to the circle, then we show (Lemma \ref{lem:tildeS-isometric}) that either $\Lambda_\theta$ or $-\Lambda_\theta$ is unital and positive, and therefore $\theta$ induces a map between $K$-theory and traces in such a manner that respects the pairing (which in turn gives a map between Elliott invariants for certain simple C*-algebras). As a consequence, we can identify certain unitary subgroups with C*-subalgebras by using $K$-theoretic classification of embeddings \cite{CGSTW23}. 

\begin{result}[Corollary \ref{cor:ugh-subgroup-subalgebra}]
Let $A$ be a unital, separable, simple, nuclear C*-algebra satisfying the UCT which is either $\cZ$-stable or has stable rank one, and $B$ be a unital, separable, simple, nuclear $\cZ$-stable C*-algebra. If there is a contractive injective group homomorphism $U(A) \to U(B)$ which maps the circle to the circle, then there is a unital embedding $A \into B$.
\end{result}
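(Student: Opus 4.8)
The plan is to distill from $\theta$ a morphism of Elliott invariants and then feed it to the existence half of the classification of embeddings in \cite{CGSTW23}. First I would note that the regularity hypotheses are in force: since $A$ is $\cZ$-stable or has stable rank one and $B$ is $\cZ$-stable, the canonical maps $\pi_1(U^0(A))\to K_0(A)$ and $\pi_1(U^0(B))\to K_0(B)$ are isomorphisms (by \cite{Rieffel87,Jiang97}), and the companion identifications $U(A)/U^0(A)\cong K_1(A)$ and $U(B)/U^0(B)\cong K_1(B)$ hold as well. Being contractive, $\theta$ is continuous, so it carries $U^0(A)$ into $U^0(B)$ and its restriction is a uniformly continuous homomorphism; Corollary \ref{cor:pairing} then yields $K_0(\theta)\colon K_0(A)\to K_0(B)$ and a bounded linear $\Lambda_\theta\colon\Aff T(A)\to\Aff T(B)$ commuting with the pairings $\rho_A,\rho_B$. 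Passing to $\pi_0$, the same continuity gives an induced homomorphism on connected components, i.e.\ a map $K_1(\theta)\colon K_1(A)\to K_1(B)$.

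Next I would fix the sign and check the order normalizations. By Lemma \ref{lem:tildeS-isometric} there is $\epsilon\in\{\pm1\}$ making $\epsilon\Lambda_\theta$ unital and positive; I take $\epsilon\Lambda_\theta$ and $\epsilon K_0(\theta)$ as the candidate maps, so that the pairing square still commutes. The key point is that $\epsilon K_0(\theta)$ sends $[1_A]$ to $[1_B]$ exactly. Here the circle-to-circle hypothesis does the work: $\theta$ restricts to a continuous injective endomorphism of the scalar circle, necessarily $z\mapsto z^{k}$ with $k\in\{\pm1\}$, and under $\pi_1(U^0)\cong K_0$ the central loop $t\mapsto e^{2\pi i t}\mathbf{1}$ represents the class of the unit; hence $K_0(\theta)[1_A]=k[1_B]$ on the nose. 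Since the sign $k$ recorded by the circle endomorphism is the same one detected by $\Lambda_\theta$, we have $\epsilon=k$ and therefore $\epsilon K_0(\theta)[1_A]=k^2[1_B]=[1_B]$. Positivity of $\epsilon K_0(\theta)$ then follows from that of $\epsilon\Lambda_\theta$: a unital positive map of $\Aff T$-spaces preserves strict positivity, and by strict comparison in the simple $\cZ$-stable algebra $B$ this forces the image of a nonzero positive class to lie in $K_0(B)^+$.

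With this data in hand I would assemble the triple consisting of $\epsilon K_0(\theta)$, the map $K_1(\theta)$, and the continuous affine map $T(B)\to T(A)$ obtained by restricting the dual of $\epsilon\Lambda_\theta$ (contravariant, as an embedding demands) into a morphism of invariants; the only pairing compatibility to verify is the $K_0$–trace one, which holds by construction, while $K_1$ carries no trace constraint. Because $A$ satisfies the UCT, this $K$-theoretic and tracial data lifts to a $KK$-class of the sort required by \cite{CGSTW23}, whose existence theorem then produces a unital embedding $A\into B$ realizing the morphism.

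The step I expect to be the main obstacle is the normalization: manufacturing an honest morphism of invariants rather than one correct only modulo infinitesimals. Forcing $[1_A]\mapsto[1_B]$ exactly is precisely what compels the use of the circle-to-circle hypothesis (the degree $\pm1$ of the induced circle endomorphism) in place of the pairing alone, and ensuring the single sign $\epsilon$ is used coherently across $K_0$, the traces, and the chosen $K_1$-lift, so that all three are compatible with one admissible $KK$-class, is the delicate part.
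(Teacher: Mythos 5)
Your proposal follows essentially the same route as the paper's proof: use the regularity hypotheses (\cite{Jiang97}, \cite{Rieffel87}) to identify $\pi_1(U^0(\cdot))$ with $K_0$ and $\pi_0(U(\cdot))$ with $K_1$, invoke Corollary \ref{cor:pairing} for the pairing square, fix the sign via the degree $\pm 1$ of $\theta|_{\bT}$ and Lemma \ref{lem:tildeS-isometric} so that $\epsilon\Lambda_\theta$ is unital and positive, assemble a $KT_u$-morphism as in Theorem \ref{thm:KTu-morphism-from-injection} and Corollary \ref{cor:reg-KT_u-morph}, and then apply the existence theorem of \cite{CGSTW23}. Indeed, your explicit verification that $\epsilon K_0(\theta)([1_A]_0)=[1_B]_0$ — identifying $[1_A]_0$ with the central loop $t\mapsto e^{2\pi i t}1$ and noting that the degree $k$ of $\theta|_{\bT}$ coincides with the sign $\epsilon$ detected by $S_\theta(1)$ — spells out a point the paper leaves implicit in Theorem \ref{thm:KTu-morphism-from-injection}. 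Two remarks. First, your argument for positivity of $\epsilon K_0(\theta)$ is superfluous: the paper's definition of a $KT_u$-morphism imposes no positivity on the $K_0$-map, only unit-preservation and compatibility with $\rho_A,\rho_B$. Second, and this is the one genuine omission, \cite[Theorem B]{CGSTW23} produces an embedding only from a \emph{faithful} morphism of invariants, i.e.\ one whose induced map $T_\theta: T(B)\to T(A)$ carries traces on $B$ to faithful traces on $A$; you cite the existence theorem without checking this hypothesis. This is exactly where simplicity of $A$ enters the paper's proof: every tracial state on a simple unital C*-algebra is faithful (its null set $\{a : \tau(a^*a)=0\}$ is a proper closed two-sided ideal, hence zero), so faithfulness is automatic, but the step needs to be said for the citation to apply.
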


This paper is structured as follows. In Section \ref{section:ugh-ctugh-and-traces} we use a continuous unitary group homomorphism to construct a map between spaces of continuous affine functions on the trace simplices, and use the de la Harpe-Skandalis determinant to show that this map has several desirable properties with respect to the map induced on the fundamental groups of the unitary groups. In Section \ref{section:ugh-order-on-aff-classification} we discuss how our map between spaces of affine functions respects or flips the order under certain continuity assumptions on the unitary group homomorphism. In Section \ref{section:ugh-general-linear-variants} we discuss a slight general linear variant. We finish in Section \ref{section:ugh-final-remarks} with some open questions.

  \addtocontents{toc}{\protect\setcounter{tocdepth}{0}}
  \section*{Acknowledgements}
  Many thanks to my PhD supervisors Thierry Giordano and Aaron Tikuisis for many helpful discussions. Thanks to the authors of \cite{CGSTW23} for sharing a draft of their paper. Finally, thanks to the referee for asking for clarification on the general linear variant, which led me to consider a counter-example to $\bC$-linearity. 

  \addtocontents{toc}{\protect\setcounter{tocdepth}{1}}

\renewcommand*{\thetheorem}{\arabic{section}.\arabic{theorem}}

\section{Preliminaries and notation}

\subsection{Notation} 
For a group $G$, we denote by $DG$ the derived subgroup of $G$, i.e.,
\begin{equation} DG := \langle ghg^{-1}h^{-1} \mid g,h \in G \rangle. \end{equation}
If $G$ has an underlying topology, we denote by $CG$ the closure of $DG$ and $G^0$ the connected component of the identity. 

For a unital C*-algebra $A$, $U(A)$ denotes the unitary group of $A$, while $U^0(A)$ denotes the connected component of the identity in $U(A)$. For $n \in \bN$, we write $U_n(A) := U(M_n(A))$, $U_n^0(A) := U^0(M_n(A))$, and we set
\begin{equation} U_{\infty}(A) := \dlim\, U_n(A), \end{equation}
to be the inductive limit with connecting maps $U_n(A) \ni u \mapsto u \oplus 1 \in U_{n+1}(A)$. 
This makes $U_\infty(A)$ both a topological space (with the inductive limit topology) and a group.\footnote{As pointed out in footnote 58 of \cite{CGSTW23}, $U_{\infty}(A)$ is not in general a topological group since multiplication is not in general jointly continuous in this topology.}
We have general linear analogues by replacing $U$ with $GL$, where $GL(A)$ denotes the group of invertible elements of $A$. 
Similarly, we define $M_{\infty}(A) = \dlim \, M_n(A)$ (as an algebraic direct limit) with connecting maps $x \mapsto x \oplus 0$. If $E$ is real Banach space and $\tau: A_{sa} \to E$ is a linear map that is tracial (i.e., $\tau(a^*a) = \tau(aa^*)$ for all $a \in A$), we extend this canonically to $(M_{\infty}(A))_{sa}$ by setting $\tau((a_{ij})) := \sum_i \tau(a_{ii})$ for $(a_{ij})\in (M_n(A)_{sa}$. 

We write $\pi_0(X)$ for the space of connected components of a topological space $X$, and $\pi_1(X)$ for the fundamental group of $X$ with distinguished base point. In our case, we will usually have $X = U_n(A)$ or $X = U_n^0(A)$, for $n \in \bN \cup \{\infty\}$, with the base point being the unit. 

For a unital C*-algebra $A$, we let $K_0(A),K_1(A)$ be the topological $K$-groups of $A$. 
We will often use the identification of $K_0(A)$ with the fundamental group $\pi_1(U_{\infty}^0(A))$ (see for example \cite[Chapter 11.4]{RordamKBook}).
The set of tracial states on $A$ will be denoted $T(A)$, which is a Choquet simplex (\cite[Theorem 3.1.18]{Sakaibook}), and we denote by $\Aff T(A)$ the set of continuous affine functions $T(A) \to \bR$, which is an interpolation group with order unit (see \cite[Chapter 11]{Goodearlbook}).
For unital $A$, the pairing map $\rho_A: K_0(A) \to \Aff T(A)$ is defined as follows: if $x \in K_0(A)$, we can write $x = [p] - [q]$ where $p,q \in M_n(A)$ are projections, and then
\begin{equation}
\rho_A(x)(\tau) := \tau (p - q), \ \ \ \tau \in T(A).
\end{equation}

\subsection{The de la Harpe--Skandalis determinant and Thomsen's variant}\label{section:dlHS-det}

We recall the definition of the unitary variant of the de la Harpe--Skandalis determinant \cite{dlHS84a} (see \cite{dlHarpe13} for a more in-depth exposition). By a bounded trace we mean a bounded linear map $\tau: A_{sa} \to E$, where $E$ is a real Banach space, such that $\tau(a^*a) = \tau(aa^*)$ for all $a \in A$. For $n \in \bN \cup \{\infty\}$, a bounded trace $\tau: A_{sa} \to E$, and a piece-wise smooth path $\xi: [0,1] \to U_n(A)$, set
\begin{equation} 
\label{eq:predetDefn}
\tD_\tau^n(\xi) := \int_0^1 \tau\left(\frac{1}{2\pi i}\xi'(t)\xi(t)^{-1}\right)dt,
\end{equation}
where this integral is just the Riemann integral in $E$.\footnote{Note that $\xi'(t)\xi(t)$ is skew-adjoint by \cite[Proposition 1.4]{Phillips92} so that $\frac{1}{2\pi i}\xi'(t)\xi(t)^{-1}$ is self-adjoint.}
 We state the unitary variant of \cite[Lemme 1]{dlHS84a}.

\begin{prop}\label{prop:detFacts}
Let $\tau: A_{sa} \to E$ be a bounded trace and $n \in \bN \cup \{\infty\}$. The map $\tD_\tau^n$, which takes a piece-wise smooth path in $U_n^0(A)$ to an element in $E$, has the following four properties:
  \begin{enumerate}
    \item it takes pointwise products to sums: if $\xi_1,\xi_2$ are two piece-wise smooth paths, then
    \begin{equation} \tD_\tau^n(\xi_1\xi_2) = \tD_\tau^n(\xi_1) + \tD_\tau^n(\xi_2), \end{equation}
    where $\xi_1\xi_2$ is the piece-wise smooth path $t \mapsto \xi_1(t)\xi_2(t)$ from $\xi_1(0)\xi_2(0)$ to $\xi_1(1)\xi_2(1)$;
  \item if $\|\xi(t) - 1\| < 1$ for all $t \in [0,1]$, then \begin{equation} 2\pi i \tD_\tau^n(\xi) = \tau\big(\log\xi(1) - \log\xi(0)\big); \end{equation}
    \item $\tD_\tau^n(\xi)$ depends only on the continuous homotopy class of $\xi$;
    \item if $p \in M_n(A)$ is a projection, then the path $\xi_p: [0,1] \to U_n^0(A)$ given by $\xi_p(t) := pe^{2\pi i t} + (1-p)$ satisfies $\tD_\tau^n(p) = \tau(p)$. \label{predetprojection}
  \end{enumerate}
\end{prop}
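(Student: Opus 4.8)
The plan is to verify the four properties essentially by direct computation, the workhorse being the cyclicity $\tau(ab)=\tau(ba)$ of the bounded trace. This cyclicity follows from the defining identity $\tau(a^*a)=\tau(aa^*)$ by polarization, after extending $\tau$ complex-linearly from $A_{sa}$ to all of $A$ (and to its matrix amplifications); I would record this at the outset, since every step uses it. I would also note that for $n=\infty$ a piece-wise smooth path out of the compact interval $[0,1]$ has image in some $U_m^0(A)$ by the definition of the inductive-limit topology, so each assertion reduces to the case of finite $n$, where $\tau$ is the canonical amplification $\tau((a_{ij}))=\sum_i\tau(a_{ii})$.

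For property (1), I would differentiate $(\xi_1\xi_2)'=\xi_1'\xi_2+\xi_1\xi_2'$ and right-multiply by $(\xi_1\xi_2)^{-1}=\xi_2^{-1}\xi_1^{-1}$ to obtain, pointwise in $t$, the identity $\xi_1'\xi_1^{-1}+\xi_1(\xi_2'\xi_2^{-1})\xi_1^{-1}$. Applying $\tau$ and using cyclicity to cancel the conjugation by the unitary $\xi_1(t)$ in the second summand (which is self-adjoint, being a unitary conjugate of the self-adjoint $\tfrac{1}{2\pi i}\xi_2'\xi_2^{-1}$) gives $\tau(\xi_1'\xi_1^{-1})+\tau(\xi_2'\xi_2^{-1})$, and integrating in $t$ yields the product-to-sum formula. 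Property (4) is the most mechanical: for $\xi_p(t)=pe^{2\pi it}+(1-p)$ one checks $\xi_p(t)^{-1}=pe^{-2\pi it}+(1-p)$ and $\xi_p'(t)=2\pi i\,pe^{2\pi it}$, so that $\xi_p'(t)\xi_p(t)^{-1}=2\pi i\,p$ using $p(1-p)=0$; the integrand is then the constant $p$ and $\tD^n_\tau(\xi_p)=\tau(p)$.

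For property (2), the crux is the identity $\tfrac{d}{dt}\tau(\log\xi(t))=\tau(\xi'(t)\xi(t)^{-1})$, valid because $\|\xi(t)-1\|<1$ lets me write $\log\xi=\sum_{k\ge1}\tfrac{(-1)^{k+1}}{k}(\xi-1)^k$ with term-wise differentiation justified by uniform convergence. Differentiating the $k$-th term by the product rule produces $k$ summands $(\xi-1)^j\xi'(\xi-1)^{k-1-j}$, each of which contributes the same value $\tau(\xi'(\xi-1)^{k-1})$ under $\tau$ by cyclicity; the factors of $k$ cancel, and the resulting series $\sum_{k\ge1}(-1)^{k+1}(\xi-1)^{k-1}$ sums to $\xi^{-1}$. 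Since $\log\xi(t)$ is skew-adjoint for unitary $\xi(t)$, the fundamental theorem of calculus then gives $2\pi i\,\tD^n_\tau(\xi)=\tau(\log\xi(1)-\log\xi(0))$.

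The main obstacle is property (3), homotopy invariance. For a smooth homotopy $H(s,t)$ rel endpoints, writing $\dot H=\partial_s H$ and $H'=\partial_t H$, I would establish the mixed-partials identity $\partial_s\tau(H'H^{-1})=\partial_t\tau(\dot H H^{-1})$; this uses $\partial_s\partial_t=\partial_t\partial_s$, the rule $\partial_s(H^{-1})=-H^{-1}\dot H H^{-1}$, and—crucially—cyclicity to recognize $\tau(H'H^{-1}\dot H H^{-1})=\tau(\dot H H^{-1}H'H^{-1})$ (a rotation by two factors). Integrating in $t$ and applying the fundamental theorem of calculus collapses $\tfrac{d}{ds}\tD^n_\tau(H(s,\cdot))$ to the boundary terms $\tau(\dot H(s,1)H(s,1)^{-1})-\tau(\dot H(s,0)H(s,0)^{-1})$, which vanish since the fixed endpoints force $\dot H(s,0)=\dot H(s,1)=0$. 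The genuinely delicate point is upgrading from smooth homotopies to arbitrary continuous ones: I would subdivide $[0,1]^2$ finely enough that $H$ varies in norm by less than $1$ on each cell, approximate by a piece-wise smooth homotopy, and use the local logarithm formula of property (2) on each cell to show the pre-determinant is unaffected. Making this approximation interact correctly with the piece-wise smooth (rather than $C^1$) structure, and keeping the base-point/endpoint bookkeeping consistent, is where the real care is needed.
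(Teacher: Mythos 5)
Your proof is sound, but note that the paper does not prove this proposition at all: it is stated as the unitary variant of \cite[Lemme 1]{dlHS84a}, so the real comparison is with de la Harpe--Skandalis's original argument. Your treatments of (1), (2) and (4) reproduce that argument faithfully (complexify $\tau$ and polarize to get cyclicity, differentiate the logarithm series term-wise, compute $\xi_p'(t)\xi_p(t)^{-1}=2\pi i\,p$), and your reduction of $n=\infty$ to finite $n$ via compactness of $[0,1]$ is the right bookkeeping. The one structural difference is in (3): the classical proof needs no smooth-homotopy/mixed-partials step, because homotopy invariance is formal from (1) and (2). Indeed, if $\xi_1,\xi_2$ are piece-wise smooth with the same endpoints and $\sup_t\|\xi_1(t)-\xi_2(t)\|<1$, then $\eta(t):=\xi_1(t)^{-1}\xi_2(t)$ is a loop at $1$ with $\|\eta(t)-1\|\le\|\xi_2(t)-\xi_1(t)\|<1$, so (2) gives $\tD_\tau^n(\eta)=0$ and (1) gives $\tD_\tau^n(\xi_2)=\tD_\tau^n(\xi_1)+\tD_\tau^n(\eta)=\tD_\tau^n(\xi_1)$; a general continuous homotopy rel endpoints is then cut, by uniform continuity, into finitely many slices that are pairwise uniformly within distance $<1$, each slice being replaced by a nearby piece-wise smooth path with the same endpoints (the smoothing that appears in the paper as Proposition \ref{rem:cts-to-piece-wise-smooth}). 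Your mixed-partials computation for smooth homotopies is correct but dispensable, since your own ``upgrade'' step for continuous homotopies is already this (1)+(2) subdivision argument; conversely, approximating the homotopy itself by a smooth homotopy of unitaries (rather than approximating finitely many slices) would cost more care than it saves, which is exactly the delicacy you flagged. So: correct, same essential toolkit as the cited source, with one redundant limb in the homotopy-invariance argument.
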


  The de la Harpe--Skandalis determinant associated to $\tau$ (at the $n^{\text{th}}$ level) is then the map
  \begin{equation} \Delta_\tau^n: U_{\infty}^0(A) \to E/\tD_\tau^n(\pi_1(U_n^0(A))) \end{equation}
  given by $\Delta_\tau^n(x) := [\tD_\tau^n(\xi_x)]$ where $\xi_x$ is any piece-wise smooth path $\xi_x: [0,1] \to U_n^0(A)$ from 1 to $x$. This is a well-defined group homomorphism (using Proposition \ref{prop:detFacts}) to an abelian group and therefore factors through the derived group, i.e., $DU_n^0(A) \subseteq \ker \Delta_\tau^n$. For the $n = \infty$ case, we just write $\tD_\tau$ and $\Delta_\tau$ for $\tD_\tau^{\infty}$ and $\Delta_\tau^{\infty}$ respectively.
  
  We will often be interested in the \emph{universal trace} $\Tr_A: A_{sa} \to \Aff T(A)$, which is  given by $\Tr_A(a) := \hat{a}$, where $\hat{a} \in \Aff T(A)$ is the function given by $\hat{a}(\tau) := \tau(a)$ for $\tau \in T(A)$. We note that in this case, for $[x] \in K_0(A)$, we have that $\Tr(x) = \rho_A([x])$.  When considering the universal trace, we will write $\Delta^n$ and $\Delta$ for $\Delta_{\Tr}^n$ and $\Delta_{\Tr}^{\infty}$ respectively. If the C*-algebra needs to be specified, we write $\Delta_A^n$ or $\Delta_A$.
  
      \begin{prop}\label{rem:cts-to-piece-wise-smooth}\mbox{}
Let $n \in \bN \cup \{\infty\}$. Every continuous path $\xi: [0,1] \to U_n(A)$ is homotopic to a piece-wise smooth path {\normalfont (}even a piece-wise smooth exponential path if we are in $U_n^0(A)${\normalfont )}. Moreover, there exists $a \in A_{sa}$ such that $\tD_\omega^n(\xi) = \omega(a)$ whenever $\omega: A_{sa} \to E$ is a bounded trace.

In particular, as $\tD^n$ is homotopy-invariant, it makes sense to apply $\tD^n$ to any continuous path. 
\begin{proof}
This is essentially \cite[Lemme 3]{dlHS84a}. Take a continuous path $\xi: [0,1] \to U_n(A)$ and choose $k$ such that
    \begin{equation}
    \left\|\xi(\frac{j-1}{k})^{-1}\xi(\frac{j}{k}) - 1\right\| < 1 \text{ for all } j=1,\dots,k.
    \end{equation}
    Then taking
    \begin{equation}
    a_j := \frac{1}{2\pi i}\log \left(\xi(\frac{j-1}{k})^{-1}\xi(\frac{j}{k})\right), j=1,\dots,k,
    \end{equation}
     $\xi$ will be homotopic to the path
    \begin{equation}
    \eta(t) = \xi\left(\frac{j-1}{k}\right)e^{2\pi i(kt - j+1)a_j}, t \in \left[\frac{j-1}{k},\frac{j}{k}\right], j=1,\dots,k.
    \end{equation}
            We note that $\tD_\tau^n(\eta) = \sum_{j=1}^k \tau(a_j)$. Indeed, for simplicity denote by 
 \begin{equation}
 X_j := \xi\left(\frac{j-1}{k}\right)\text{ and }Y_j := e^{2\pi i(kt - j + 1)a_j}. 
 \end{equation}
 Then
\begin{equation}\label{eq:xi-to-eta-ajs}
\begin{split}
 \tD_\omega^n(\xi) &=  \tD_\omega^n(\eta) \\
 &=\sum_{j=1}^k \int_{\frac{j-1}{k}}^{\frac{j}{k}}\tau\left(\frac{1}{2\pi i}X_j 2\pi i k a_jY_jY_j^*X_j^*\right) dt \\
 &= \sum_{j=1}^k \int_{\frac{j-1}{k}}^{\frac{j}{k}}k\tau(a_j) dt \\
 &= \sum_{j=1}^k \tau(a_j). \\
 \end{split}
\end{equation}
If we take $a := \tr(a')$, where $a' = \sum_{j=1}^k a_j$ (here $\tr: M_{\infty}(A) \to A$ is the unnormalized trace), we see that $\tD_\omega^n(\xi) = \omega(a)$.
 \end{proof}
    \end{prop}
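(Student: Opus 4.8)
The plan is to follow the classical de la Harpe--Skandalis argument: approximate $\xi$ by a concatenation of short exponential arcs built from a principal logarithm near $1$, and then transport the value of $\tD^n$ through the resulting homotopy, reading off the element $a$ from the construction. First I would use that $[0,1]$ is compact and $\xi$ is continuous, hence uniformly continuous, to pick $k \in \bN$ so large that $\|\xi(s) - \xi(t)\| < 1$ whenever $|s - t| \le 1/k$. Since left translation by a unitary is isometric, this forces $\|\xi(\tfrac{j-1}{k})^{-1}\xi(t) - 1\| < 1$ for every $t \in [\tfrac{j-1}{k}, \tfrac{j}{k}]$ and each $j$, so the principal logarithm is holomorphic on the relevant spectra and $a_j := \tfrac{1}{2\pi i}\log(\xi(\tfrac{j-1}{k})^{-1}\xi(\tfrac{j}{k}))$ is well defined; as the logarithm of a unitary is skew-adjoint, each $a_j$ is self-adjoint. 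For $n = \infty$ I would first note that the compact set $\xi([0,1])$ lies in a single finite stage $U_m(A)$, so the whole argument runs inside the genuine C*-algebra $M_m(A)$.

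Next I would assemble the piece-wise smooth exponential path $\eta$ of the statement and check continuity by verifying that the right endpoint $\xi(\tfrac{j-1}{k})e^{2\pi i a_j} = \xi(\tfrac{j}{k})$ of the $j$-th arc coincides with the left endpoint $\xi(\tfrac{j}{k})$ of the next. When $\xi$ takes values in $U_n^0(A)$ one may present $\eta$ as a genuine piece-wise exponential path, since the constants $\xi(\tfrac{j-1}{k})$ are themselves finite products of exponentials and one can prepend an exponential arc from $1$ to $\xi(0)$. To produce the homotopy $\xi \simeq \eta$ I would argue locally: on the $j$-th subinterval both $\xi$ and the arc of $\eta$ lie in $B_j := \{v \in U_n(A) : \|\xi(\tfrac{j-1}{k})^{-1}v - 1\| < 1\}$, and this set deformation-retracts onto $\{\xi(\tfrac{j-1}{k})\}$ via $s \mapsto \xi(\tfrac{j-1}{k}) e^{s \log(\xi(\tfrac{j-1}{k})^{-1}v)}$ (one checks the spectral estimate keeping the homotopy inside $B_j$). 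Hence $B_j$ is simply connected, the two arcs are homotopic rel endpoints inside $B_j$, and gluing these homotopies across the nodes yields $\xi \simeq \eta$.

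With the homotopy established, Proposition \ref{prop:detFacts}(3) gives $\tD_\omega^n(\xi) = \tD_\omega^n(\eta)$, so it remains to evaluate the right-hand side arc-by-arc. Differentiating on the $j$-th piece gives $\eta'(t)\eta(t)^{-1} = \xi(\tfrac{j-1}{k})(2\pi i\, k\, a_j)\xi(\tfrac{j-1}{k})^{-1}$, and applying the matrix extension $\tau$ of $\omega$ together with its invariance under unitary conjugation, $\omega(uau^*) = \omega(a)$ (a consequence of $\omega(b^*b) = \omega(bb^*)$ by polarization of the complexified trace), yields $\tau(\tfrac{1}{2\pi i}\eta'\eta^{-1}) = k\,\tau(a_j)$ there. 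Integrating over the length-$1/k$ interval and summing gives $\tD_\omega^n(\eta) = \sum_{j=1}^k \tau(a_j)$. Finally, setting $a := \tr\big(\sum_{j=1}^k a_j\big) \in A_{sa}$ with $\tr : M_n(A) \to A$ the sum-of-diagonals trace, the very definition of the extension $\tau$ gives $\sum_j \tau(a_j) = \omega(a)$; crucially $a$ is fixed by the data $(k, a_j)$ and does not depend on $\omega$, which is exactly the ``moreover''. The ``in particular'' then follows, since homotopy invariance makes $\tD^n_\omega$ on any piece-wise smooth approximant independent of the choice.

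I expect the main obstacle to be the homotopy step rather than the integral computation. One has to be attentive to the fact that $\tD^n$ is a priori defined only on piece-wise smooth paths, so the logical force of Proposition \ref{prop:detFacts}(3) is to legitimise extending $\tD^n$ to arbitrary continuous paths through any such approximant; independence of the approximation reduces precisely to the simple-connectedness of the small balls $B_j$, which is the crux. By contrast, the trace computation is routine once unitary invariance of $\omega$ is in hand.
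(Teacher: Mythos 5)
Your proposal is correct and takes essentially the same route as the paper's proof: the same subdivision of $[0,1]$, the same logarithms $a_j$, the same piece-wise exponential interpolant $\eta$, the same integral computation using traciality, and the same element $a = \tr\big(\sum_j a_j\big)$. The only substantive difference is that you fill in details the paper delegates to its citation of de la Harpe--Skandalis: the homotopy $\xi \simeq \eta$ rel endpoints (via a uniform-continuity choice of $k$ and contractibility of the balls $B_j$) and, for $n = \infty$, the reduction of the compact image $\xi([0,1])$ to a finite matrix stage.
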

  
  Let $A_0$ consist of elements $a \in A_{sa}$ satisfying $\tau(a) = 0$ for all $\tau \in T(A)$.
  This is a norm-closed real subspace of $A_{sa}$ such that $A_0 \subseteq \ov{[A,A]}$, and there is an isometric identification $A_{sa}/A_0 \simeq \Aff T(A)$ sending an element $[a]$ to $\widehat{a}$.
  Indeed, it is not hard to see that the map $A_{sa}/A_0 \to \Aff T(A)$ given by $[a] \mapsto \hat{a}$ is a well-defined $\bR$-linear map.
  Moreover \cite[Theorem 2.9]{CuntzPedersen79}, together with a convexity argument, gives that this is isometric identification.
  To see that we have all the real-valued affine functions, we note that the image of this map contains constant functions and separates points, so \cite[Corollary 7.4]{Goodearlbook} gives that the image is dense and therefore all of $\Aff T(A)$ (since this is an isometry).
  We freely identify $A_{sa}/A_0$ with $\Aff T(A)$.

      \subsection{Thomsen's variant} Thomsen's variant of the de la Harpe--Skandalis determinant is the Hausdorffized version, taking into account the closure of the image of the homotopy groups. For a bounded trace $\tau: A_{sa} \to E$, we consider the map
      \begin{equation} \bar{\Delta}_\tau^n: U_n^0(A) \to E/\ov{\tD_\tau^n(\pi_1(U_n^0(A)))}, \end{equation}
      given by $\bar{\Delta}_\tau^n(x) := [\tD_\tau^n(\xi_x)]$ where $\xi_x: [0,1] \to U_n^0(A)$ is any piece-wise smooth path from 1 to $x \in U_n^0(A)$.
This is similar to the map $\Delta_\tau^n$, except the codomain is now the quotient by the closure of the image of the fundamental group under the pre-determinant (i.e., the Hausdorffization of the codomain).
When considering the universal trace, we just write $\ov\Delta^n$ for $\ov\Delta_{\Tr}^n$ and $\ov\Delta$ for $\ov\Delta_{\Tr}^{\infty}$. If the C*-algebra needs to be specified, we write $\ov\Delta_A^n$ or $\ov\Delta_A$.

If one considers the universal trace, the kernel of $\ov\Delta^n$ can be identified with $CU_n^0(A)$ (where the closure is taken with respect to the inductive limit topology in the $n = \infty$ case). 

\begin{lemma}[Lemma 3.1, \cite{Thomsen95}]
Let $A$ be a unital C*-algebra. Then
\begin{equation}
\ker \ov\Delta^n = CU_n^0(A).
\end{equation}
\end{lemma}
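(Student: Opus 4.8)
The plan is to prove the two inclusions separately; the inclusion $CU_n^0(A)\subseteq \ker\ov\Delta^n$ is soft, while $\ker\ov\Delta^n\subseteq CU_n^0(A)$ carries the technical weight. For the easy inclusion I would note that $\ov\Delta^n$ is a group homomorphism into an abelian group (Proposition \ref{prop:detFacts}(1)) and that it is continuous: by Proposition \ref{prop:detFacts}(2) it is given on the neighbourhood $\{u:\|u-1\|<1\}$ by $u\mapsto[\tfrac{1}{2\pi i}\Tr_A(\log u)]$, which is continuous, and a homomorphism continuous at the identity is continuous everywhere (for $n=\infty$ it suffices that the restriction to each $U_N^0(A)$ is continuous). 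Since the codomain $\Aff T(A)/\ov{\tD^n(\pi_1(U_n^0(A)))}$ is the quotient of a Banach space by a closed subgroup, it is a Hausdorff topological group, so $\ker\ov\Delta^n$ is closed; as every homomorphism into an abelian group kills commutators, it contains $DU_n^0(A)$ and hence its closure $CU_n^0(A)$.

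The heart of the reverse inclusion is the claim that if $h\in M_n(A)_{sa}$ satisfies $\tau(h)=0$ for all $\tau\in T(A)$, then $e^{2\pi i h}\in CU_n^0(A)$. To prove it, recall that such $h$ lies in $\ov{[M_n(A),M_n(A)]}$; taking self-adjoint parts and using that $[x,y]^*=-[x,y]$ for self-adjoint $x,y$ (so that $i[x,y]$ is self-adjoint and such elements real-span the self-adjoint commutators), one writes $h=\lim_m\sum_l i[x_l^{(m)},y_l^{(m)}]$ with $x_l^{(m)},y_l^{(m)}$ self-adjoint. Because $CU_n^0(A)$ is norm-closed it suffices to exponentiate each finite sum into $CU_n^0(A)$; by the Lie product formula $e^{2\pi i\sum_l c_l}=\lim_N(\prod_l e^{2\pi i c_l/N})^N$ this reduces to a single self-adjoint commutator $c=i[x,y]$. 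Writing $2\pi i c=[X,Y]$ with $X=i\sqrt{2\pi}\,x$ and $Y=i\sqrt{2\pi}\,y$ skew-adjoint, the group-commutator limit $e^{[X,Y]}=\lim_N\big(e^{X/\sqrt N}e^{Y/\sqrt N}e^{-X/\sqrt N}e^{-Y/\sqrt N}\big)^N$ exhibits $e^{2\pi i c}$ as a limit of products of commutators of unitaries in $U_n^0(A)$, hence in $CU_n^0(A)$.

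Granting this, the map $\Psi:M_n(A)_{sa}\to U_n^0(A)/CU_n^0(A)$, $h\mapsto[e^{2\pi i h}]$, is a continuous homomorphism (the identity $e^{2\pi i h}e^{2\pi i k}\equiv e^{2\pi i(h+k)}\pmod{CU_n^0(A)}$ follows once more from the Lie product formula, together with the fact that rearranging $(xy)^N$ into $x^Ny^N$ only introduces commutators), and the claim shows it factors through $\Aff T(A)=M_n(A)_{sa}/(M_n(A))_0$ as a continuous map $\ov\Psi:\Aff T(A)\to U_n^0(A)/CU_n^0(A)$. Given $u\in\ker\ov\Delta^n$, I would choose a piece-wise smooth exponential path from $1$ to $u$ and apply Proposition \ref{rem:cts-to-piece-wise-smooth} to write $u=\prod_j e^{2\pi i a_j}$ with $\tD^n(\xi_u)=\widehat{\textstyle\sum_j a_j}=:\hat a$, so that $[u]=\ov\Psi(\hat a)$ in the abelianization. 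By Proposition \ref{prop:detFacts}(4) every element of $\tD^n(\pi_1(U_n^0(A)))$ has the form $\hat p-\hat q$ for projections $p,q$, and $\ov\Psi(\hat p-\hat q)=[e^{2\pi ip}]-[e^{2\pi iq}]=0$; since $\ov\Psi$ is continuous and the target Hausdorff, $\ov\Psi$ annihilates the closure $\ov{\tD^n(\pi_1(U_n^0(A)))}$. But $u\in\ker\ov\Delta^n$ means exactly $\hat a\in\ov{\tD^n(\pi_1(U_n^0(A)))}$, whence $[u]=\ov\Psi(\hat a)=0$, i.e.\ $u\in CU_n^0(A)$.

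I expect the claim of the second paragraph to be the main obstacle: the passage from algebra-level commutators to the \emph{closed} commutator subgroup, where the group-commutator and Lie-product estimates must be organized so that the limits live in a genuine topological group. The remaining delicacy is the case $n=\infty$, where $U_\infty^0(A)$ is not a topological group; there I would run every step at a fixed finite level $N$ (noting $u\in U_N^0(A)$, that the path and the $a_j$ stay at level $N$, and that $\ov{\tD(\pi_1(U_\infty^0(A)))}=\ov{\rho_A(K_0(A))}$), and then transport the finite-level conclusion into $CU_\infty^0(A)$ using that each inclusion $U_N^0(A)\hookrightarrow U_\infty^0(A)$ is a topological embedding for the inductive-limit topology — keeping the $K_0$-approximants at bounded matrix size being the one genuinely fiddly point.
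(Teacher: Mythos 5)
The paper itself offers no proof of this lemma (it is quoted from Thomsen), so the comparison is with the argument of the cited source; your skeleton — the soft inclusion $CU_n^0(A)\subseteq\ker\ov\Delta^n$ via continuity of $\ov\Delta^n$ into a Hausdorff abelian quotient, and the hard inclusion via the Cuntz--Pedersen description of $A_0$, the Lie product and group-commutator formulas showing $e^{2\pi i h}\in CU_n^0(A)$ for traceless $h$, and the exponential-segment decomposition of paths from Proposition \ref{rem:cts-to-piece-wise-smooth} — is essentially that argument, and almost all of it is correct. There is, however, one genuinely wrong step at finite $n$: your treatment of $\ov\Psi$ on $\tD^n(\pi_1(U_n^0(A)))$. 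Proposition \ref{prop:detFacts}(4) gives only the containment $\hat p-\hat q\in\tD^n(\pi_1(U_n^0(A)))$ for projections $p,q\in M_n(A)$; the converse you assert is true only with projections in $M_m(A)$ for arbitrary $m\geq n$ (it requires stabilizing, identifying $\pi_1(U_\infty^0(A))\simeq K_0(A)$, and using homotopy invariance of $\tD$). But then the evaluation $\ov\Psi(\hat p-\hat q)=[e^{2\pi i p}]-[e^{2\pi i q}]$ is not legitimate: $p$ lies outside the domain $M_n(A)_{sa}$ of $\Psi$, and $\ov\Psi$ must be computed on a level-$n$ representative $h$ with $\hat h=\hat p-\hat q$, whose exponential need not be trivial modulo $CU_n^0(A)$. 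Indeed, were your computation valid for all such $p,q$, it would show $\ov\Psi$ annihilates all of $\ov{\rho_A(K_0(A))}$, contradicting Theorem \ref{theorem:thomsen-iso} whenever $\ov{\tD^n(\pi_1(U_n^0(A)))}\subsetneq\ov{\rho_A(K_0(A))}$ — a possibility the paper treats as real, since its corollaries assume surjectivity of $\pi_1(U^0(A))\to K_0(A)$ precisely to pass between these two groups. The repair is one line you already possess: applying your identity $[\xi(1)]=\ov\Psi(\tD^n(\xi))$ to a \emph{loop} $\xi$ (so $\xi(1)=1$, i.e., $1=\prod_j e^{2\pi i a_j}$) gives $\ov\Psi(\tD^n(\xi))=[1]=0$ directly; thus $\ov\Psi$ kills $\tD^n(\pi_1(U_n^0(A)))$ with no projections at all, and its closure by continuity.

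The $n=\infty$ case you flag is also a genuine issue, and your stated plan (``run every step at a fixed finite level $N$'') does not literally work: for $u\in U_N^0(A)\cap\ker\ov\Delta$ one only knows $\hat a\in\ov{\rho_A(K_0(A))}$, which in general strictly contains $\ov{\tD^N(\pi_1(U_N^0(A)))}$, so the approximating $K_0$-classes cannot be kept at bounded matrix size, and sequences of correction terms drifting through levels need not converge in the inductive limit topology. The escape is to keep the \emph{exponent}, not the $K_0$-approximants, at bounded level: pick $x_m\in K_0(A)$ realized by loops at levels $L_m\geq N$ with $\|\hat a-\rho_A(x_m)\|\to 0$; the finite-level argument at level $L_m$ gives $u=e^{2\pi i b_m}d_m$ with $d_m\in CU_{L_m}^0(A)$, where the small-norm representative $b_m$ of the class $\hat a-\rho_A(x_m)$ may be chosen in $A_{sa}\subseteq M_N(A)_{sa}$ (the identification $A_{sa}/A_0\simeq\Aff T(A)$ is isometric at level one). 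Then $d_m=e^{-2\pi i b_m}u$ lies in $U_N^0(A)$ and converges to $u$ in norm \emph{at the fixed level} $N$; since a set is closed in the inductive limit topology exactly when its intersection with each $U_L(A)$ is norm-closed, $CU_\infty^0(A)\cap U_N(A)$ is norm-closed and $u\in CU_\infty^0(A)$ follows. With these two repairs your proof is complete and matches the cited argument.
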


It is not in general true that the kernel of $\Delta^n$ can be identified with the derived group $DU_n^0(A)$, although there are several positive results \cite{dlHS84b,Thomsen93,Ng14,NgRobert17,NgRobert15}. 

It immediately follows that the quotient of $U_n^0(A)$ by the closure of the commutator subgroup (under the inductive limit topology in the $n = \infty$ case) can be identified with a quotient of $\Aff T(A)$.

\begin{theorem}[Theorem 3.2, \cite{Thomsen95}]\label{theorem:thomsen-iso}
$\ov\Delta^n$ gives a homeomorphic group iso\hyp{}morphism
\begin{equation}
U_n^0(A)/CU_n^0(A) \simeq \Aff T(A)/\ov{\tD^n(\pi_1(U_n^0(A)))}
\end{equation}
for every $n \in \bN \cup \{\infty\}$. In particular,
\begin{equation}
U_{\infty}^0(A)/CU_{\infty}^0(A) \simeq \Aff T(A)/\ov{\rho_A(K_0(A))}.
\end{equation}
\end{theorem}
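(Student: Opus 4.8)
The plan is to exhibit $\ov\Delta^n$ as a continuous, open, bijective group homomorphism, after which the statement is just the first isomorphism theorem in the topological category. The algebraic skeleton is immediate from what precedes: Proposition \ref{prop:detFacts}(1) makes $\ov\Delta^n$ a group homomorphism, and the preceding lemma gives $\ker \ov\Delta^n = CU_n^0(A)$, so $\ov\Delta^n$ descends to an \emph{injective} homomorphism $\bar\Delta^n\colon U_n^0(A)/CU_n^0(A) \to \Aff T(A)/\ov{\tD^n(\pi_1(U_n^0(A)))}$. For surjectivity I would use that every element of $\Aff T(A)$ is of the form $\hat a = \Tr(a)$ with $a \in A_{sa}\subseteq M_n(A)_{sa}$, via the isometric identification $A_{sa}/A_0 \simeq \Aff T(A)$, and that the exponential path $t\mapsto e^{2\pi i t a}$ yields $\ov\Delta^n(e^{2\pi i a}) = [\hat a]$ by a one-line evaluation of \eqref{eq:predetDefn} (the integrand being constantly $a$). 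Hence $\bar\Delta^n$ is onto, and thus a group isomorphism.

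The substance is that $\bar\Delta^n$ is a homeomorphism. For continuity, near the unit Proposition \ref{prop:detFacts}(2) applied to a short principal-logarithm path gives $\ov\Delta^n(u) = \tfrac{1}{2\pi i}[\,\widehat{\log u}\,]$ whenever $\|u-1\|<1$, which is continuous at $1$ since $\log$ and $\Tr$ are. For finite $n$, $U_n^0(A)$ is a genuine topological group, so continuity of the homomorphism $\ov\Delta^n$ at the identity forces continuity everywhere, and the universal property of the quotient topology on the domain then yields continuity of $\bar\Delta^n$. For the inverse (equivalently, openness) I would construct an explicit continuous section: the map $A_{sa}\to U_n^0(A)/CU_n^0(A)$, $a\mapsto [e^{2\pi i a}]$, is continuous; it factors through $\Aff T(A)=A_{sa}/A_0$ because $\hat a = \hat b$ forces $e^{2\pi i a}(e^{2\pi i b})^{-1}\in\ker\ov\Delta^n = CU_n^0(A)$, and the factored map is continuous since $A_{sa}\to\Aff T(A)$ is an open quotient map; it further factors through $\Aff T(A)/\ov{\tD^n(\pi_1(U_n^0(A)))}$ because the preimage of the basepoint is closed (as $CU_n^0(A)$ is closed) and contains $\tD^n(\pi_1(U_n^0(A)))$, hence its closure. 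The resulting continuous map $\Aff T(A)/\ov{\tD^n(\pi_1(U_n^0(A)))}\to U_n^0(A)/CU_n^0(A)$ is a two-sided inverse of $\bar\Delta^n$ (checking both composites on representatives, using Proposition \ref{rem:cts-to-piece-wise-smooth} to write $\tD^n(\xi_u)=\hat a$), so $\bar\Delta^n$ is a homeomorphism.

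The main obstacle is the case $n=\infty$, where $U_{\infty}^0(A)$ carries the inductive limit topology and is \emph{not} a topological group, so the shortcut ``continuous at the identity $\Rightarrow$ continuous everywhere'' is unavailable. Here I would test continuity levelwise: a map out of $U_{\infty}^0(A)$ is continuous iff its restriction to each $U_m^0(A)$ is. Since a loop in $U_m^0(A)$ is a loop in $U_{\infty}^0(A)$ with the same pre-determinant value, there is a continuous surjection $\Aff T(A)/\ov{\tD^m(\pi_1(U_m^0(A)))}\to\Aff T(A)/\ov{\tD^{\infty}(\pi_1(U_{\infty}^0(A)))}$, and $\ov\Delta^{\infty}|_{U_m^0(A)}$ equals this surjection precomposed with the already-continuous $\ov\Delta^m$; this gives continuity of $\ov\Delta^{\infty}$. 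The section argument transfers verbatim through $A_{sa}\to U_1^0(A)\hookrightarrow U_{\infty}^0(A)$, the inclusion into the inductive limit being continuous.

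Finally, for the displayed special case I would invoke the identification $\pi_1(U_{\infty}^0(A))\simeq K_0(A)$ and Proposition \ref{prop:detFacts}(4): on the loop $\xi_p$ attached to a projection $p$ one has $\tD^{\infty}(\xi_p)=\Tr(p)=\rho_A([p])$. As both $\tD^{\infty}$ (transported along the identification) and $\rho_A$ are homomorphisms $K_0(A)\to\Aff T(A)$ agreeing on the generating classes $[p]$, they coincide, so $\tD^{\infty}(\pi_1(U_{\infty}^0(A)))=\rho_A(K_0(A))$; taking closures gives the ``in particular'' statement.
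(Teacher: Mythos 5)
The paper never proves this statement: it is imported verbatim from Thomsen, with the bracketed attribution [Theorem 3.2, \cite{Thomsen95}] serving as the proof, so there is no internal argument to compare yours against. Judged on its own merits, your reconstruction is correct, and it follows the same general route as Thomsen's original argument: compute the kernel, produce an exponential section, and check continuity in both directions. Do be aware of where the weight sits, though. All of the hard analysis in the theorem is concentrated in the kernel identification $\ker\ov\Delta^n = CU_n^0(A)$, which you take as a black box; that is legitimate here, since the paper states it as a separately quoted lemma immediately before the theorem, but it means everything you actually prove --- surjectivity via $\ov\Delta^n(e^{2\pi i a}) = [\hat{a}]$, continuity via the principal logarithm near $1$ together with Proposition~\ref{prop:detFacts}(2), the continuous inverse via the section $[\hat{a}]\mapsto[e^{2\pi i a}]$, and the identification $\tD^\infty(\pi_1(U_\infty^0(A))) = \rho_A(K_0(A))$ from Proposition~\ref{prop:detFacts}(4) --- is the comparatively soft half. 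Your treatment of the two genuine subtleties is right: for finite $n$ you use that $U_n^0(A)$ is an honest topological group so continuity at $1$ propagates, and for $n=\infty$, where joint continuity of multiplication fails, you correctly substitute the levelwise criterion for the inductive limit topology; likewise, factoring the section through the Hausdorffized quotient by observing that a closed kernel containing $\tD^n(\pi_1(U_n^0(A)))$ contains its closure is exactly the right mechanism. Two facts you use silently deserve a line each: that $U_\infty^0(A) = \bigcup_m U_m^0(A)$ and that this union carries the inductive limit topology of the $U_m^0(A)$'s (this is what validates the levelwise continuity test and the claim that a path, having compact image, lies in a finite stage), and that under the identification $K_0(A)\simeq\pi_1(U_\infty^0(A))$ of \cite{RordamKBook} the class $[p]$ of a projection corresponds to the loop $\xi_p$, which is what makes Proposition~\ref{prop:detFacts}(4) applicable to the generating classes. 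Both are standard, and neither is a gap in substance.
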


\subsection{The $KT_u$-invariant}

Following \cite{CGSTW23}, for a unital C*-algebra, we let
\begin{equation}
KT_u(A) := (K_0(A),[1_A]_0,K_1(A),\rho_A,\Aff T(A))
\end{equation}
be the invariant consisting of the $K_0$-group, the position of the unit in $K_0$, the $K_1$ group, the pairing between $K_0$ and traces, and the continuous real-valued affine functions on the trace simplex (viewed as a partially ordered Banach space with order unit). 
For two unital C*-algebras $A,B$, a $KT_u$-morphism
\begin{equation}
(\alpha_0,\alpha_1,\gamma): KT_u(A) \to KT_u(B),
\end{equation}
will be a triple $(\alpha_0,\alpha_1,\gamma)$ consisting of $\alpha_0: K_0(A) \to K_0(B)$ a group homomor\hyp{}phism such that $\alpha_0([1_A]_0) = [1_B]_0$, $\alpha_1: K_1(A) \to K_1(B)$ a group homomorphism, and $\gamma: \Aff T(A) \to \Aff T(B)$ is a unital positive map such that
\begin{equation}
\begin{tikzcd}
K_0(A) \arrow[r, "\rho_A"] \arrow[d, "\alpha_0"'] & \Aff T(A) \arrow[d, "\gamma"] \\
K_0(B) \arrow[r, "\rho_B"']                       & \Aff T(B)                    
\end{tikzcd}
\end{equation}
commutes. 

We note that for large classes of unital simple C*-algebras -- for example the class of unital, simple, separable, nuclear $\cZ$-stable C*-algebras satisfying the UCT -- $KT_u(\cdot)$ recovers the Elliott invariant. 

\section{Continuous unitary group homomorphisms and traces}\label{section:ugh-ctugh-and-traces}

\hspace{\parindent}Throughout, $A$ and $B$ will be unital C*-algebras with non-empty trace simplices, and $\theta: U^0(A) \to U^0(B)$ will denote a uniformly continuous group homomorphism between the connected components of the identities in the  respective unitary groups. We will specify any additional assumptions as we go along. As $\theta$ is a continuous group homomorphism, it send commutators to commutators and limits of commutators to limits of commutators. Thus there are induced group homomorphisms
\begin{equation}
\begin{split}
&U^0(A)/CU^0(A) \to U^0(B)/CU^0(B) \text{ and }\\
&U^0(A)/DU^0(A) \to U^0(B)/DU^0(B).
\end{split}
\end{equation}
Thomsen's isomorphism \cite[Theorem 3]{Thomsen95} then brings about maps between quotients of $\Aff T(A)$ and $\Aff T(B)$:
\begin{equation}\label{eq:ch-commute}
\begin{tikzcd}
U^0(A)/CU^0(A) \arrow[r, "\simeq"] \arrow[d] & \Aff T(A)/\ov{\tD_A^1(\pi_1(U^0(A)))} \arrow[d] \\
U^0(B)/CU^0(B) \arrow[r, "\simeq"']          & \Aff T(B)/\ov{\tD_B^1(\pi_1(U^0(B)))}     .    
\end{tikzcd}
\end{equation}
In a similar vein, when $DU^0(A) = \ker\Delta_A^1$ and $DU^0(B) = \ker\Delta_B^1$, there is a purely algebraic variant of the above diagram:
\begin{equation}\label{eq:h-commute}
\begin{tikzcd}
U^0(A)/DU^0(A) \arrow[r, "\simeq"] \arrow[d] & \Aff T(A)/\tD_A^1(\pi_1(U^0(A))) \arrow[d] \\
U^0(B)/DU^0(B) \arrow[r, "\simeq"']          & \Aff T(B)/\tD_B^1(\pi_1(U^0(B))).
\end{tikzcd}
\end{equation}
In fact, there is always a diagram as above with $DU^0(A)$ and $DU^0(B)$ replaced with $\ker\Delta_A^1$ and $\ker\Delta_B^1$, respectively, by Proposition \ref{prop:pre-det-commuting-diagrams}(3). That is, whether or not the kernel of the determinant agrees with the commutator subgroup of the connected component of the identity, we have:
\begin{equation}\label{eq:h-commute-pre-det}
\begin{tikzcd}
U^0(A)/\ker\Delta_A^1 \arrow[r, "\simeq"] \arrow[d] & \Aff T(A)/\tD_A^1(\pi_1(U^0(A))) \arrow[d] \\
U^0(B)/\ker\Delta_B^1 \arrow[r, "\simeq"']          & \Aff T(B)/\tD_B^1(\pi_1(U^0(B))).
\end{tikzcd}
\end{equation}
The diagram in (\ref{eq:h-commute}) is just a special case of (\ref{eq:h-commute-pre-det}).
 
 In the setting where $\pi_1(U^0(A)) \to K_0(A)$ and $\pi_1(U^0(B)) \to K_0(B)$ are surjections, we have induced maps between quotients
\begin{equation}
\begin{split}
&\Aff T(A)/\ov{\rho_A(K_0(A))} \to \Aff T(B)/\ov{\rho_B(K_0(B))}, \\
&\Aff T(A)/\rho_A(K_0(A)) \to \Aff T(B)/\rho_B(K_0(B))
\end{split}
\end{equation}
in the respective Hausdorffized and non-Hausdorffized settings. 

One question to be answered is whether or not we can lift the maps on the right of (\ref{eq:ch-commute}) and (\ref{eq:h-commute-pre-det}) to maps $\Aff T(A) \to \Aff T(B)$. These spaces have further structure as dimension groups with order units \cite[Chapter 7]{Goodearlbook}, so we would like to be able to alter the lift to get a map which is unital and positive. We show that we can always lift this map,  and altering it to be unital and positive is possible under a certain continuity assumption on $\theta$.

If we further assume that $K_0(A) \simeq \pi_1(U^0(A))$ and $K_0(B) \simeq \pi_1(U^0(B))$ in the canonical way (which is true in the presence of $\cZ$-stability by \cite{Jiang97} or stable rank one \cite{Rieffel87}), we would like this map to be compatible with the group homomorphism
\begin{equation}
K_0(\theta): K_0(A) \to K_0(B)
\end{equation}
arising from the diagram 
\begin{equation}
\begin{tikzcd}
K_0(A) \arrow[r, "\simeq"] \arrow[d, "K_0(\theta)"'] & \pi_1(U^0(A)) \arrow[d, "\pi_1(\theta)"] \\
K_0(B) \arrow[r, "\simeq"']                           & \pi_1(U^0(B)).                          
\end{tikzcd}
\end{equation}
By compatible, we mean that 
\begin{equation}
\begin{tikzcd}
K_0(A) \arrow[d, "\pi_1(\theta)"'] \arrow[r, "\rho_A"] & \Aff T(A) \arrow[d] \\
K_0(B) \arrow[r, "\rho_B"']                          & \Aff T(B)          
\end{tikzcd}
\end{equation}
commutes, where the map on the right is the lift coming from maps induced by (\ref{eq:ch-commute}) and (\ref{eq:h-commute-pre-det}). If our map between spaces of affine continuous functions is not unital and positive, but we can alter it accordingly, we must do the same to our map between $K_0$. We would still have a commuting diagram as above, but it would give that maps induced on $K_0(\cdot)$ and $\Aff T(\cdot)$ respect the pairing. 

Stone's theorem \cite[Section X.5]{Conway19} allows one to recover from a strongly continuous one parameter family  $U(t)$ of unitaries a (possibly unbounded) self-adjoint operator $X$ such that $U(t) = e^{itX}$ for all $t \in \bR$. If it is a norm-continuous one parameter family of unitaries, one can recover a bounded self-adjoint operator $X$, and $X$ will lie in the C*-algebra generated by the unitaries. 
The use of Stone's theorem to deduce that continuous group homomorphisms between unitary groups send exponentials to exponentials is not new. Sakai used it in the 1950's in order to show that a norm-continuous group isomorphism between  unitary groups of AW*-algebras are induced by a *-isomorphism or conjugate-linear *-isomorphism between the algebras themselves \cite{Sakai55}. More recently, this sort of idea has been used to understand how the metric structure of the unitary groups can be related to the Jordan *-algebra structure of the algebras \cite{HatoriMolnar14}.

\begin{lemma}\label{lem:stone-theorem-cons}
Let $A,B$ be unital C*-algebras and $\theta: U^0(A) \to U^0(B)$ be a continuous group homomorphism. Suppose that $a \in A_{sa}$ and represent $B \subseteq \bh$ faithfully. Then $(\theta(e^{2\pi i t a}))_{t \in \bR}$ is a one-parameter norm-continuous family of unitaries, and consequently is of the form $(e^{2\pi i t b})_{t \in \bR}$ for a unique $b \in B_{sa}$. 
\begin{proof}
Using the fact that $\theta$ is a norm-continuous homomorphism, $t \mapsto \theta(e^{2\pi i t a})$ is a norm-continuous one-parameter family of unitaries. Stone's theorem gives that there is a unique self-adjoint $b \in \bh$ such that $\theta(e^{2\pi i ta}) = e^{2\pi i tb}$ for all $t \in \bR$. The boundedness of $b$ follows from norm-continuity, and the uniqueness follows from the fact that $b = \frac{1}{2\pi i t_0} \log \theta(e^{2\pi i a t_0})$ for sufficiently small $t_0 > 0$. This also gives that $b \in B$ by functional calculus. 
\end{proof}
\end{lemma}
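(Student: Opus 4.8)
The plan is to realize $U(t) := \theta(e^{2\pi i t a})$ as a bona fide one-parameter unitary group in $\bh$ to which Stone's theorem applies, and then to upgrade the abstract Stone generator into a \emph{bounded} element lying in $B$ rather than merely in $\bh$.

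First I would check the two structural features of $U$. Since $a \in A_{sa}$ is bounded, $t \mapsto e^{2\pi i t a}$ is a homomorphism from $(\bR,+)$ into $U^0(A)$ -- the identity $e^{2\pi i(s+t)a} = e^{2\pi i s a}e^{2\pi i t a}$ is just functional calculus -- and it is norm-continuous because the exponential series in $ta$ converges uniformly on compact $t$-intervals. Applying the group homomorphism $\theta$ gives $U(s+t) = \theta(e^{2\pi i s a})\theta(e^{2\pi i t a}) = U(s)U(t)$ and $U(0) = 1$, so $U$ is a one-parameter group of unitaries in $U^0(B) \subseteq \bh$; norm-continuity of $U$ is inherited from the norm-continuity of $\theta$ composed with $t \mapsto e^{2\pi i t a}$.

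Next I would invoke Stone's theorem for the (norm-, hence strongly, continuous) group $U$, obtaining a self-adjoint $b \in \bh$, unique as an operator, with $U(t) = e^{2\pi i t b}$ for all $t \in \bR$. A priori Stone only guarantees $b$ self-adjoint and possibly unbounded, and boundedness is exactly where norm-continuity -- as opposed to mere strong continuity -- is essential. This is the standard characterization that a one-parameter unitary group is norm-continuous precisely when its generator is bounded; via the spectral theorem it reflects the fact that $\|U(t) - 1\| = \sup_{\lambda \in \sigma(b)} |e^{2\pi i t\lambda} - 1|$ cannot tend to $0$ as $t \to 0$ when $\sigma(b)$ is unbounded. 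Hence $b \in \bh$ is genuinely bounded and self-adjoint.

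Finally I would pin $b$ down inside $B$ and record uniqueness. Using norm-continuity, choose $t_0 > 0$ so small that $\|U(t_0) - 1\| < 1$; then the principal logarithm of $U(t_0)$ is given by a norm-convergent power series, so it lies in the C*-subalgebra generated by $U(t_0) \in B$, hence in $B$ since $B$ is norm-closed. As $U(t_0) = e^{2\pi i t_0 b}$ with $t_0$ small, this principal branch recovers $2\pi i t_0 b$, giving $b = \frac{1}{2\pi i t_0}\log U(t_0) \in B_{sa}$, and the same formula shows $b$ is uniquely determined by the family. The main obstacle I anticipate is precisely these two upgrade steps -- boundedness of $b$ and membership $b \in B$ -- both of which hinge on having honest norm-continuity of $U$ (strong continuity would leave $b$ unbounded and only affiliated to, not inside, $B$) together with the closedness of $B$ under functional calculus.
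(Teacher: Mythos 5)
Your proposal is correct and follows essentially the same route as the paper's proof: apply Stone's theorem to the norm-continuous one-parameter group $t \mapsto \theta(e^{2\pi i t a})$, deduce boundedness of the generator from norm-continuity, and recover $b \in B_{sa}$ together with its uniqueness via the logarithm $b = \frac{1}{2\pi i t_0}\log\theta(e^{2\pi i t_0 a})$ for small $t_0$, which lies in $B$ by functional calculus. Your added detail on why the logarithm lands in $B$ (norm-convergent series in the C*-subalgebra generated by $\theta(e^{2\pi i t_0 a})$) simply makes explicit what the paper compresses into the phrase ``by functional calculus.''
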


Let $S_\theta: A_{sa} \to B_{sa}$ be defined via the correspondence given above: 
\begin{equation}
\theta(e^{2\pi i t a}) = e^{2\pi i t S_\theta(a)} \text{ for all } t \in \bR.
\end{equation}
Then $S_\theta$ is a bounded $\bR$-linear map (see \cite{Sakai55,HatoriMolnar14}, or note that it is easy to see that its kernel is closed). It is also easily checked to respect commutation, and that its canonical extension to a map from $A$ to $B$ actually sends commutators to commutators, although we will not explicitly use this. Recall that for a C*-algebra $A$, $A_0$ denotes the set of self-adjoint elements that vanish on every tracial state. 

\begin{lemma}\label{lem:sa-bijective}
If $\theta:U^0(A) \to U^0(B)$ is a continuous group homomorphism, then $S_\theta$ is a bounded linear map and the following hold.
\begin{enumerate}
\item If $\theta$ is injective, then $S_\theta$ is injective. 
\item If $\theta$ is a homeomorphism, then $S_\theta$ is bijective.
\end{enumerate}
\begin{proof}
As already remarked, $S_\theta$ is bounded. Assuming that $\theta$ is injective, suppose that $S_\theta(a) = S_\theta(b)$. Then
\begin{equation}
\theta(e^{2\pi i t a}) = \theta(e^{2\pi i t b})
\end{equation}
for all $t \in \bR$. Injectivity of $\theta$ gives that that $e^{2\pi i t a} = e^{2\pi i t b}$ for all $t \in \bR$. But this implies that $a = b$ since we can take $t$ appropriately close to 0 and take logarithms. 

Now if we further assume that $\theta$ is a homeomorphism, then $(\theta^{-1}(e^{2\pi i t b}))_{t \in \bR} \subseteq U^0(A)$ is a norm-continuous one-parameter family of unitaries which we can write as $(e^{2\pi i t a})_{t \in \bR}$ for some $a \in A_{sa}$. But then
\begin{equation}
\theta(e^{2\pi i t a}) = \theta \circ \theta^{-1}(e^{\pi i t b}) = e^{2\pi i t b}.
\end{equation}
Thus $S_\theta(a) = b$ by the uniqueness in Lemma \ref{lem:stone-theorem-cons}.
\end{proof}
\end{lemma}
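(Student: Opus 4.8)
The plan is to exploit the defining relation $\theta(e^{2\pi i t a}) = e^{2\pi i t S_\theta(a)}$ together with the uniqueness clause in Lemma \ref{lem:stone-theorem-cons}. Boundedness and $\bR$-linearity of $S_\theta$ were already noted before the statement, so it remains to establish the two injectivity/bijectivity claims.

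For part (1), I would argue that injectivity of $S_\theta$ is \emph{inherited} from that of $\theta$. Suppose $S_\theta(a) = S_\theta(b)$. Feeding this into the defining relation gives $\theta(e^{2\pi i t a}) = e^{2\pi i t S_\theta(a)} = e^{2\pi i t S_\theta(b)} = \theta(e^{2\pi i t b})$ for every $t \in \bR$, and injectivity of $\theta$ then forces the two one-parameter families to coincide: $e^{2\pi i t a} = e^{2\pi i t b}$ for all $t$. The one remaining point — which I expect is the only place needing care — is to recover the generator from the one-parameter group. For this I would restrict to $t$ small enough that $\|e^{2\pi i t a} - 1\| < 1$, so that the principal logarithm is available by holomorphic functional calculus and $a = \frac{1}{2\pi i t}\log(e^{2\pi i t a}) = \frac{1}{2\pi i t}\log(e^{2\pi i t b}) = b$.

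For part (2), injectivity of $S_\theta$ is immediate from part (1), since a homeomorphism is in particular injective, so the substance is surjectivity. The key observation is that $\theta^{-1}: U^0(B) \to U^0(A)$ is itself a continuous group homomorphism (being the inverse of a homeomorphic group isomorphism), so Lemma \ref{lem:stone-theorem-cons} applies verbatim with the roles of $A$ and $B$ interchanged. Given a target $b \in B_{sa}$, I would apply that lemma to $\theta^{-1}$ and the family $(e^{2\pi i t b})_{t \in \bR}$, producing $a \in A_{sa}$ with $\theta^{-1}(e^{2\pi i t b}) = e^{2\pi i t a}$ for all $t$. Applying $\theta$ and using that it inverts $\theta^{-1}$ yields $\theta(e^{2\pi i t a}) = e^{2\pi i t b}$, and the uniqueness in Lemma \ref{lem:stone-theorem-cons} identifies this with the defining relation for $S_\theta$, giving $S_\theta(a) = b$. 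Hence $S_\theta$ is onto.

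The only genuine obstacle is the logarithm step in part (1): one must ensure the exponential stays within the open unit ball about $1$ so that the functional-calculus logarithm is a true inverse of the exponential there. Everything else is bookkeeping around the defining relation and the uniqueness already secured in Lemma \ref{lem:stone-theorem-cons}.
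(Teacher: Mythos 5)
Your proposal is correct and follows the paper's proof essentially verbatim: part (1) recovers $a=b$ from the coinciding one-parameter groups by taking logarithms for small $t$, and part (2) applies Lemma \ref{lem:stone-theorem-cons} to the continuous homomorphism $\theta^{-1}$ and invokes the uniqueness clause to get $S_\theta(a)=b$. The only fine point, which the paper glosses in the same way, is that the condition guaranteeing $\log(e^{2\pi i t a}) = 2\pi i t a$ is really $2\pi|t|\,\|a\|<\pi$ (smallness of $t$ relative to $\|a\|$ and $\|b\|$) rather than the ball condition $\|e^{2\pi i t a}-1\|<1$ alone, but choosing $t$ sufficiently small secures both at once.
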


Recall that a linear map $\tau: A_{sa} \to E$, where $E$ is a real Banach space, is a bounded trace is if it is a bounded $\bR$-linear map such that $\tau(a^*a) = \tau(aa^*)$ for all $a \in A$. Note that this is equivalent to $\tau \circ \Ad_u = \tau$ for all $u \in U(A)$ -- to see this, follow the steps in \cite[Exercise 3.6]{RordamKBook} together with a complexification argument. This is further equivalent to $\tau \circ \Ad_u = \tau$ for all $u \in U^0(A)$ (as one can write every element in a C*-algebra as the linear combination of 4 unitaries, each of which can be made to not have full spectrum). 

\begin{prop}\label{prop:induced-aff-map}
Let $\theta: U^0(A) \to U^0(B)$ be a continuous group homo\hyp{}morphism, $E$ be a real Banach space and $\tau: B_{sa} \to E$ a bounded trace. Then $\tau \circ S_\theta: A_{sa} \to E$ is a bounded trace. In particular, $S_\theta(A_0) \subseteq B_0$ and $S_\theta$ induces a bounded $\bR$-linear map
\begin{equation}
\Lambda_\theta: \Aff T(A) \to \Aff T(B).
\end{equation}
\begin{proof}
Observe that, for $a \in A_{sa}$ and $u \in U^0(A)$, we have
\begin{equation}
\begin{split}
e^{2\pi i t S_\theta(uau^*)} &= \theta(e^{2\pi i t uau^*}) \\
&= \theta(ue^{2\pi i t a}u^*) \\\
&= \theta(u)e^{2\pi i t S_\theta(a)}\theta(u)^* \\
&= e^{2\pi i t \theta(u)S_\theta(a)\theta(u)^*}
\end{split}
\end{equation}
for all $t \in \bR$. Therefore $S_\theta(uau^*) = \theta(u)S_\theta(a)\theta(u)^*$, and applying $\tau$ yields
\begin{equation}
\begin{split}
\tau \circ S_\theta(uau^*)  &= \tau\left(\theta(u)S_\theta(a)\theta(u)^*\right) \\
&= \tau ( S_\theta(a)),
\end{split}
\end{equation}
i.e., $\tau \circ S_\theta$ is tracial.

Thus if $a \in A_0$, it vanishes on every tracial state (hence on every tracial functional), and so $\tau \circ S_\theta(a) = 0$ for all $\tau \in T(B)$. Therefore $S_\theta(A_0) \subseteq B_0$ and so $S_\theta$ factors through a map
\begin{equation}
\Lambda_\theta: \Aff T(A) \simeq A_{sa}/A_0 \to B_{sa}/B_0 \simeq \Aff T(B),
\end{equation}
where we identify $A_{sa}/A_0 \simeq \Aff T(A)$ and $B_{sa}/B_0 \simeq \Aff T(B)$. 
\end{proof}
\end{prop}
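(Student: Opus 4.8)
The plan is to verify the trace condition for $\tau \circ S_\theta$ by exploiting the characterization, recalled just before the statement, that a bounded $\bR$-linear map $\sigma: A_{sa} \to E$ is a bounded trace if and only if $\sigma \circ \Ad_u = \sigma$ for every $u \in U^0(A)$. Since $S_\theta$ is already known to be bounded and $\bR$-linear, and $\tau$ is bounded, the composite $\tau \circ S_\theta$ is automatically bounded and $\bR$-linear; the only thing to check is its invariance under $\Ad_u$ for $u \in U^0(A)$. The crux is therefore to understand how $S_\theta$ interacts with conjugation by unitaries.

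First I would establish the equivariance identity $S_\theta(uau^*) = \theta(u)\,S_\theta(a)\,\theta(u)^*$ for $a \in A_{sa}$ and $u \in U^0(A)$. Since conjugation commutes with the exponential, $e^{2\pi i t\, uau^*} = u\,e^{2\pi i t a}\,u^*$, and applying the homomorphism $\theta$ together with the defining relation for $S_\theta$ gives
\[
e^{2\pi i t\, S_\theta(uau^*)} = \theta(u)\,e^{2\pi i t\, S_\theta(a)}\,\theta(u)^* = e^{2\pi i t\, \theta(u) S_\theta(a) \theta(u)^*}
\]
for all $t \in \bR$. The uniqueness clause of Lemma \ref{lem:stone-theorem-cons} then forces the two self-adjoint generators to coincide, yielding the identity. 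Applying $\tau$ and using that $\theta(u) \in U^0(B)$, so that $\tau$ is invariant under $\Ad_{\theta(u)}$, gives $(\tau \circ S_\theta)(uau^*) = (\tau \circ S_\theta)(a)$, which is exactly the required $\Ad_u$-invariance; hence $\tau \circ S_\theta$ is a bounded trace.

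For the second assertion I would specialize to the real-valued case. Fix $a \in A_0$ and an arbitrary $\sigma \in T(B)$; viewing $\sigma$ as a bounded trace into $\bR$, the first part shows $\sigma \circ S_\theta$ is a bounded trace on $A$. Since $a$ lies in $A_0$ it is annihilated by every tracial state, and I would then argue that it is consequently annihilated by every bounded (real) tracial functional — so in particular $(\sigma \circ S_\theta)(a) = 0$. As $\sigma \in T(B)$ was arbitrary, $S_\theta(a) \in B_0$, so $S_\theta$ descends to the quotients $A_{sa}/A_0 \to B_{sa}/B_0$; under the isometric identifications $A_{sa}/A_0 \simeq \Aff T(A)$ and $B_{sa}/B_0 \simeq \Aff T(B)$ recalled earlier, this is the desired bounded $\bR$-linear map $\Lambda_\theta$, its boundedness inherited from that of $S_\theta$.

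I expect the main subtlety to lie in the passage from ``vanishes on every tracial state'' to ``vanishes on every bounded tracial functional''. This is not immediate from the definition of $A_0$, and the cleanest route is to invoke the Jordan decomposition of a bounded self-adjoint tracial functional $\phi = \phi_+ - \phi_-$: because the decomposition is canonical it is preserved by the inner automorphisms $\Ad_u$, so both $\phi_\pm$ are again positive traces, hence nonnegative scalar multiples of tracial states; vanishing on all tracial states thus forces $\phi_\pm(a) = 0$ and so $\phi(a) = 0$. Everything else — the equivariance computation and the descent to the quotient — is routine once Lemma \ref{lem:stone-theorem-cons} and the $\Ad_u$-invariance characterization are in hand.
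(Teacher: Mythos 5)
Your proposal is correct and follows essentially the same route as the paper: the conjugation-equivariance identity $S_\theta(uau^*) = \theta(u)S_\theta(a)\theta(u)^*$ obtained from the exponential relation and the uniqueness clause of Lemma \ref{lem:stone-theorem-cons}, then $\Ad$-invariance of $\tau$ to conclude traciality, then descent to the quotients $A_{sa}/A_0 \to B_{sa}/B_0$. The only difference is that you spell out, via the Jordan decomposition of a bounded self-adjoint tracial functional, the step the paper dismisses parenthetically (``vanishes on every tracial state, hence on every tracial functional''), and your argument for it is correct.
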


One cannot expect $\Lambda_\theta$ (or $S_\theta$) to be unital or positive, as the following examples show. 

\begin{example}\label{example:nonpositive-example}\mbox{}
\begin{enumerate}
\item Consider a continuous homomorphism $\theta: \bT \to \bT = U^0(\bC) = U(\bC)$. By Pontryagin duality, $\theta(z) = z^n$ for some $n \in \bZ$. We then have that $S_\theta: \bR \to \bR$ is given by $S_\theta(x) = nx$. If $n \neq 1$, clearly $S_\theta$ is not unital. If $n < 0$, then $S_\theta$ is not positive since it sends $1$ to $n < 0$. An important observation, however, is that if $n < 0$, $-S_\theta: \bR \to \bR$ is positive, and $-\frac{1}{n}S$ is unital and positive.
\item Consider $\theta: \bT^3 \to \bT$ given by $\theta(z,w,v) = \ov{z}wv$. The corresponding map $S_\theta: \bR^3 \to \bR$ is given by
\begin{equation}
S_\theta(a,b,c) = -a + b + c.
\end{equation}
Clearly $(1,0,0) \in \bC^3$ is a positive element, however $S_\theta(1,0,0) = -1 < 0$. This map is however unital.
\item Let $\theta: U_2 \to \bT$ be defined by $\theta(u) = \det(u)$. Then $S_\theta: (M_2)_{sa} \to \bR$ is defined by $S_\theta(A) = \tr A$, where $\tr$ is the unnormalized trace. Clearly this map is not unital, but it is positive.
\item Let $\theta: U_2 \to U_3$ be defined by $\theta(u) = u \oplus 1$. Then $S_\theta: (M_2)_{sa} \to (M_3)_{sa}$ is given by $S_\theta(A) = A \oplus 0$, which is again not unital, but is positive. The induced map $\Lambda_\theta: \bR \to \bR$ is given by $\Lambda_\theta(x) = \frac{2}{3}x$ for $x \in \bR$.
\item Let $\theta: \bT \into U_2$ be defined by
\begin{equation}
\theta(\lambda) = \begin{pmatrix}
\lambda \\ & \lambda
\end{pmatrix}.
\end{equation}
Then $S_\theta$ is a unital, positive isometry and $\Lambda_\theta$ gives rise the identity map
\begin{equation}
\bR = \Aff T(\bC)  \to \Aff T(M_2) = \bR.
\end{equation}
\item Let $\theta: \bT \into U_2$ be defined by
\begin{equation}
\theta(\lambda) =  \begin{pmatrix}
\lambda & \\ & \ov{\lambda}
\end{pmatrix}.
\end{equation}
Then $S_\theta: \bR \to (M_2)_{sa}$ is defined by
\begin{equation}
S_\theta(x) = \begin{pmatrix}
x & \\ & -x
\end{pmatrix}
\end{equation}
and $\Lambda_\theta$ is identically zero. 
\end{enumerate}
\end{example}

The above examples are important. If $\theta(\bT) \subseteq \bT$, which is a moderate assumption (e.g., if $\theta$ was the restriction of a unital *-homomorphism or an anti-*-homomorphism), we can restrict the homomorphism to the circle to get a continuous group homomorphism $\bT \to \bT$. We understand such homomorphisms by Pontryagin duality \cite[Chapter 4]{Follandbook16}. 

We now use (pre-)determinant techniques in order to show desirable relation\hyp{}ships between our maps.

\begin{prop}\label{prop:pre-det-commuting-diagrams}
Let $A,B$ be unital C*-algebras, $\theta: U^0(A) \to U^0(B)$ be a uniformly continuous homomorphism, $E$ a real Banach space and $\tau: B_{sa} \to E$ a bounded trace.
\begin{enumerate}
\item Let $\xi: [0,1] \to U^0(A)$ be a piece-wise smooth path with $\xi(0) = 1$. Then
\begin{equation}
\tD_{\tau \circ S_\theta}^1(\xi) = \tD_\tau^1(\theta \circ \xi).
\end{equation}
In particular, $\tD_{\tau\circ S_\theta}^1(\pi_1(U^0(A))) \subseteq \tD_\tau^1(\pi_1(U^0(B)))$. 
\item The following diagram commutes:
\begin{equation}
\begin{tikzcd}
\pi_1(U^0(A)) \arrow[d, "\pi_1(\theta)"'] \arrow[r, "\tD_{\tau \circ S_\theta}^1"] & E \arrow[d, "\id"] \\
\pi_1(U^0(B)) \arrow[r, "\tD_\tau^1"']                                             & E          .
\end{tikzcd}
\end{equation}
\item The following diagram commutes:
\begin{equation}\label{eq:det-commutation}
\begin{tikzcd}
U^0(A) \arrow[r, "\Delta_{\tau \circ S_\theta}^1"] \arrow[d, "\theta"'] & E/\tD_{\tau\circ S_\theta}^1(\pi_1(U^0(A))) \arrow[d] \\
U^0(B) \arrow[r, "\Delta_\tau^1"']                                      & E/\tD_\tau^1(\pi_1(U^0(B)))           ,              
\end{tikzcd}
\end{equation}
where the map on the right is the canonical map induced from the inclusion  $\tD_{\tau \circ S_\theta}^1(\pi_1(U^0(A))) \subseteq \tD_\tau^1(\pi_1(U^0(B)))$ coming from (1).
In particular, $\theta(\ker \Delta_{\tau\circ S_\theta}^1) \subseteq \ker \Delta_\tau^1$. 

The analogous diagram commutes if we consider Thomsen's variant of the de la Harpe-Skandalis determinant associated to $\tau$ and $\tau \circ S_\theta$ in (\ref{eq:det-commutation}). 
\end{enumerate}
\begin{proof}
By the proof of Proposition \ref{rem:cts-to-piece-wise-smooth}, we can find $k \in \bN$ and $a_1,\dots,a_k \in (M_n(A))_{sa}$ such that $\xi$ is homotopic to the path $\eta: [0,1] \to U(A)$ given by
\begin{equation}\label{eq:exponential-prod}
\eta(t) = \xi\left(\frac{j-1}{k}\right)e^{2\pi i(kt - j + 1)a_j}, t \in \left[\frac{j-1}{k},\frac{j}{k}\right]
\end{equation}
and we have that
\begin{equation}\label{eq:pre-det-for-all-omega}
\tD_\omega^n(\xi) = \tD_\omega^n(\eta) = \sum_{j=1}^k \omega(a_j)
\end{equation}
whenever $\omega: A_{sa} \to F$ is a bounded trace to a real Banach space $F$. 

Now $\theta \circ \xi$ is homotopic to $\theta \circ \eta$, which has the following form, for $j=1,\dots,k$ and $t \in [\frac{j-1}{k},\frac{j}{k}]$:
\begin{equation}
\begin{split}
\theta \circ \eta(t) &= \theta \circ \xi\left(\frac{j-1}{k}\right)\theta\left(e^{2\pi i (kt - j + 1)a_j}\right) \\
&= \theta \circ \xi\left(\frac{j-1}{k}\right)e^{2\pi i (kt - j + 1)S_\theta(a_j)}.
\end{split}
\end{equation}
By taking $X_j := \theta \circ \xi\left(\frac{j-1}{k}\right)$ and $Y_j := e^{2\pi i (kt - j + 1)S_\theta(a_j)}$ in (\ref{eq:xi-to-eta-ajs}), we see that
\begin{equation}
\tD_\tau^1(\theta \circ \xi) = \tD_\tau^1(\theta \circ \eta) = \sum_{j=1}^k \tau(S_\theta(a_j)),
\end{equation}
and this last quantity is just $\tD_{\tau \circ S_\theta}^1(\xi)$ by (\ref{eq:pre-det-for-all-omega}). 

As for parts (2) and (3), they follow from (1). Indeed, (2) is obvious and to see (3) we let
\begin{equation}
q: E/\tD_{\tau \circ S_\theta}^1(\pi(U^0(A))) \to E/\tD_\tau^1(\pi(U^0(B)))
\end{equation}
be the canonical surjection. If $\xi_u: [0,1] \to U^0(A)$ is a path from 1 to $u$, then we have that
\begin{equation}
\begin{split}
q(\Delta_{\tau \circ S_\theta}^1(u)) &= q\left(\tD_{\tau \circ S_\theta}^1(\xi_u) + \tD_{\tau \circ S_\theta}^1(\pi_1(U^0(A)))\right) \\
&= \tD_{\tau \circ S_\theta}^1(\xi_u) + \tD_\tau^1(\pi_1(U^0(B))) \\
&\overset{(1)}{=} \tD_\tau^1(\theta \circ \xi_u) + \tD_\tau^1(\pi_1(U^0(B))) \\
&= \Delta_\tau^1(\theta(u)).
\end{split}
\end{equation}

The remark about Thomsen's variant is obvious. 
\end{proof}
\end{prop}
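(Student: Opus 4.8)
The plan is to establish part (1) by a direct computation, reducing $\xi$ to the piece-wise exponential form produced in Proposition \ref{rem:cts-to-piece-wise-smooth}, and then to read off parts (2) and (3) as formal consequences of (1).

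First I would apply Proposition \ref{rem:cts-to-piece-wise-smooth} to obtain $k \in \bN$ and elements $a_1,\dots,a_k \in A_{sa}$ (the normalized logarithms $\tfrac{1}{2\pi i}\log(\xi(\tfrac{j-1}{k})^{-1}\xi(\tfrac{j}{k}))$ of the consecutive increments), so that $\xi$ is homotopic to the piece-wise exponential path $\eta$ with $\eta(t) = \xi(\tfrac{j-1}{k})\,e^{2\pi i(kt-j+1)a_j}$ on each subinterval $[\tfrac{j-1}{k},\tfrac{j}{k}]$, and so that $\tD^1_\omega(\xi) = \tD^1_\omega(\eta) = \sum_{j=1}^k \omega(a_j)$ for every bounded trace $\omega$, as recorded in (\ref{eq:xi-to-eta-ajs}). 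The crucial feature of that computation is that the constant left factor $X_j = \xi(\tfrac{j-1}{k})$ disappears under $\omega$, since $\omega$ is invariant under conjugation by unitaries.

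Next I would push the path forward by $\theta$. As $\theta$ is a homomorphism, on the $j$-th subinterval $\theta\circ\eta$ factors as $\theta\big(\xi(\tfrac{j-1}{k})\big)\,\theta\big(e^{2\pi i(kt-j+1)a_j}\big)$, and by the defining property of $S_\theta$ from Lemma \ref{lem:stone-theorem-cons} the exponential factor equals $e^{2\pi i(kt-j+1)S_\theta(a_j)}$. Thus $\theta\circ\eta$ has precisely the same piece-wise exponential shape, now governed by the elements $S_\theta(a_j) \in B_{sa}$ and the constant left factors $\theta(\xi(\tfrac{j-1}{k})) \in U^0(B)$. Feeding $X_j := \theta(\xi(\tfrac{j-1}{k}))$ and $Y_j := e^{2\pi i(kt-j+1)S_\theta(a_j)}$ into the computation (\ref{eq:xi-to-eta-ajs})---the left factors again dropping out because $\tau$ is tracial---gives $\tD^1_\tau(\theta\circ\eta) = \sum_{j=1}^k \tau(S_\theta(a_j)) = \sum_{j=1}^k (\tau\circ S_\theta)(a_j)$. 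Since $\theta$ carries a homotopy from $\xi$ to $\eta$ to a homotopy from $\theta\circ\xi$ to $\theta\circ\eta$, homotopy invariance (Proposition \ref{prop:detFacts}(3)) identifies the left-hand side with $\tD^1_\tau(\theta\circ\xi)$, while the formula applied to $\omega = \tau\circ S_\theta$ identifies the right-hand side with $\tD^1_{\tau\circ S_\theta}(\xi)$. This proves (1), and taking $\xi$ to be a loop gives the claimed inclusion $\tD^1_{\tau\circ S_\theta}(\pi_1(U^0(A))) \subseteq \tD^1_\tau(\pi_1(U^0(B)))$.

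Finally, parts (2) and (3) follow formally. For (2), under the identification $\pi_1(\theta)\colon [\gamma]\mapsto[\theta\circ\gamma]$, the assertion is exactly (1) applied to loops. For (3), given $u\in U^0(A)$ and any path $\xi_u$ from $1$ to $u$, I would push $\Delta^1_{\tau\circ S_\theta}(u) = [\tD^1_{\tau\circ S_\theta}(\xi_u)]$ through the canonical surjection induced by the inclusion from (1); part (1) rewrites this as $[\tD^1_\tau(\theta\circ\xi_u)] = \Delta^1_\tau(\theta(u))$, using that $\theta\circ\xi_u$ runs from $1$ to $\theta(u)$. The Thomsen variant is the same argument after further quotienting by the closures of the relevant subgroups. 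The one point demanding care is the second paragraph: I expect the main obstacle to be verifying that the trace identity really does let the unitary left factors $X_j$ cancel in this second computation as well, but this is exactly the mechanism already displayed in (\ref{eq:xi-to-eta-ajs}), so it reduces to checking that that computation applies verbatim to the substituted data.
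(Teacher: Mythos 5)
Your proposal is correct and follows essentially the same route as the paper's own proof: reduce $\xi$ to the piece-wise exponential path $\eta$ from Proposition \ref{rem:cts-to-piece-wise-smooth}, push forward by $\theta$ using the defining relation $\theta(e^{2\pi i t a}) = e^{2\pi i t S_\theta(a)}$, reuse the computation (\ref{eq:xi-to-eta-ajs}) with the substituted $X_j, Y_j$ (the unitary left factors cancelling by traciality), and then deduce (2) and (3) formally by applying (1) to loops and pushing through the canonical surjection. The only point you make explicit that the paper leaves implicit is that $\theta$ transports the homotopy $\xi \simeq \eta$ to a homotopy $\theta\circ\xi \simeq \theta\circ\eta$, which is a harmless (and correct) elaboration.
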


\begin{cor}\label{cor:pairing}
The following diagram commutes:
\begin{equation}\label{eq:cor:pairing-diagram1}
\begin{tikzcd}
\pi_1(U^0(A)) \arrow[d, "\pi_1(\theta)"'] \arrow[r, "\tD_A^1"] & \Aff T(A) \arrow[d, "\Lambda_\theta"] \\
\pi_1(U^0(B)) \arrow[r, "\tD_B^1"']                            & \Aff T(B).                            
\end{tikzcd} 
\end{equation}
In particular, when the canonical maps
\begin{equation}
\pi_1(U^0(A)) \to K_0(A) \text{ and }\pi_1(U^0(B)) \to K_0(B)
\end{equation}
 are isomorphisms, we have that
\begin{equation}
\begin{tikzcd}
K_0(A) \arrow[d, "K_0(\theta)"'] \arrow[r, "\rho_A"] & \Aff T(A) \arrow[d, "\Lambda_\theta"] \\
K_0(B) \arrow[r, "\rho_B"']                            & \Aff T(B)                              
\end{tikzcd} 
\end{equation}
commutes, where $K_0(\theta): K_0(A) \to K_0(B)$ is the map induced from the diagram
\begin{equation}
\begin{tikzcd}
K_0(A) \arrow[r, "\simeq"] \arrow[d, "K_0(\theta)"'] & \pi_1(U^0(A)) \arrow[d, "\pi_1(\theta)"] \\
K_0(B) \arrow[r, "\simeq"']                          & \pi_1(U^0(B)).                         
\end{tikzcd}
\end{equation}
\begin{proof}
By definition, we have that $\Lambda_\theta \circ \Tr_A = S_\theta \circ \Tr_B$ and so if $\xi: [0,1] \to U^0(A)$ is a (piece-wise smooth) path, we have
\footnotetext[3]{If $\fX,\fY$ are Banach spaces over the same base field, $f: [0,1] \to \fX$ is a Riemann integrable function and $T: \fX \to \fY$ is a bounded linear map, then $T \circ f: [0,1] \to \fY$ is Riemann integrable with $T\left(\int_0^1 f(t)dt\right) = \int_0^1 T \circ f(t) dt$.}
\begin{equation}\label{eq:lambda-S-commutes-with-Tr}
\begin{split}
\Lambda_\theta\left(\tD_{\Tr_A}^1(\xi)\right) &= \Lambda_\theta\left(\int_0^1 \Tr_A\left(\xi'(t)\xi(t)^{-1}\right)dt\right) \\
&= \int_0^1 \Lambda_\theta \circ \Tr_A\left(\xi'(t)\xi(t)^{-1}\right)dt\ \footnotemark \\
&= \int_0^1 \Tr_B \circ S_\theta\left(\xi'(t)\xi(t)^{-1}\right)dt \\
&= \tD_{\Tr_B \circ S_\theta}(\xi).
\end{split}
\end{equation}
Now we note that the diagram
\begin{equation}
\begin{tikzcd}
                                                                                                           & \Aff T(A) \arrow[rd, "\Lambda_\theta"] &                            \\
\pi_1(U^0(A)) \arrow[ru, "\tD_A^1"] \arrow[d, "\pi_1(\theta)"'] \arrow[rr, "\tD_{\Tr_B \circ S_\theta}^1"] &                                        & \Aff T(B) \arrow[d, "\id"] \\
\pi_1(U^0(B)) \arrow[rr, "\tD_B^1"']                                                                       &                                        & \Aff T(B)                 
\end{tikzcd}
\end{equation}
commutes, where the top triangle commutes by the (\ref{eq:lambda-S-commutes-with-Tr}) and the bottom square commutes by Proposition \ref{prop:pre-det-commuting-diagrams}(2). This gives that (\ref{eq:cor:pairing-diagram1}) commutes.

The second part follows since if $\xi$ is a (piece-wise smooth) path in $U_n(A)$ and $m > n$, then
\begin{equation}
\tD_\tau^m(\xi \oplus 1_{m-n}) = \tD_\tau^n(\xi).
\end{equation}
\end{proof}
\end{cor}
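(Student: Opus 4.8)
The plan is to factor the square (\ref{eq:cor:pairing-diagram1}) through the pre-determinant attached to the composite trace $\Tr_B \circ S_\theta$, taking advantage of Proposition \ref{prop:pre-det-commuting-diagrams}, which has already done the analytic work. Concretely, I would insert the intermediate arrow $\tD_{\Tr_B \circ S_\theta}^1 : \pi_1(U^0(A)) \to \Aff T(B)$ and check two identities of maps $\pi_1(U^0(A)) \to \Aff T(B)$ separately: (a) $\Lambda_\theta \circ \tD_A^1 = \tD_{\Tr_B \circ S_\theta}^1$, and (b) $\tD_B^1 \circ \pi_1(\theta) = \tD_{\Tr_B \circ S_\theta}^1$. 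Pasting (a) and (b) gives $\Lambda_\theta \circ \tD_A^1 = \tD_B^1 \circ \pi_1(\theta)$, which is exactly the asserted commutativity.

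Identity (b) requires no new argument: it is Proposition \ref{prop:pre-det-commuting-diagrams}(2) applied to the universal trace $\tau = \Tr_B : B_{sa} \to \Aff T(B)$, with $E = \Aff T(B)$, since in this case $\tau \circ S_\theta = \Tr_B \circ S_\theta$. All the content is thus in identity (a), and here I would invoke the two defining features of $\Lambda_\theta$ recorded in Proposition \ref{prop:induced-aff-map}: it is bounded and $\bR$-linear, and it intertwines the universal traces, i.e. $\Lambda_\theta \circ \Tr_A = \Tr_B \circ S_\theta$ (this is just the statement that $\Lambda_\theta$ is the map $A_{sa}/A_0 \to B_{sa}/B_0$ induced by $S_\theta$). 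Given a piece-wise smooth representative $\xi$ of a class in $\pi_1(U^0(A))$, the quantity $\tD_A^1(\xi)$ is a Riemann integral in the Banach space $\Aff T(A)$; because bounded linear maps commute with Banach-space-valued Riemann integrals, I may pull $\Lambda_\theta$ inside the integral, apply the intertwining relation to replace $\Tr_A$ by $\Tr_B \circ S_\theta$ in the integrand pointwise in $t$, and recognize the result as $\tD_{\Tr_B \circ S_\theta}^1(\xi)$. Homotopy-invariance of $\tD^1$ (Proposition \ref{prop:detFacts}(3)) together with the existence of piece-wise smooth representatives (Proposition \ref{rem:cts-to-piece-wise-smooth}) then promotes this to an equality of maps on all of $\pi_1(U^0(A))$.

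For the $K_0$-reformulation I would transport the first diagram across the canonical isomorphisms $c_A : \pi_1(U^0(A)) \xrightarrow{\simeq} K_0(A)$ and $c_B : \pi_1(U^0(B)) \xrightarrow{\simeq} K_0(B)$. The key compatibility is that, under the identification $K_0 = \pi_1(U_\infty^0)$, the pairing satisfies $\rho_A \circ c_A = \tD_A^1$: indeed $\rho_A$ is computed by $\tD_A^\infty$ on a loop representing the class, and the stabilization identity $\tD_\tau^m(\xi \oplus 1_{m-n}) = \tD_\tau^n(\xi)$ shows that a level-one loop and its image in $U_\infty^0(A)$ have equal pre-determinant. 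Since $K_0(\theta)$ is defined to be $c_B \circ \pi_1(\theta) \circ c_A^{-1}$, a direct substitution gives $\rho_B \circ K_0(\theta) = \tD_B^1 \circ \pi_1(\theta) \circ c_A^{-1} = \Lambda_\theta \circ \tD_A^1 \circ c_A^{-1} = \Lambda_\theta \circ \rho_A$, which is the desired $\rho$-square (here both $c_A$ and $c_B$ being isomorphisms is genuinely used, both to define $K_0(\theta)$ and to invert $c_A$).

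The main obstacle I anticipate is not depth but bookkeeping in identity (a): one must be careful to feed the correct trace $\tau = \Tr_B$ into Proposition \ref{prop:pre-det-commuting-diagrams}, to use the intertwining relation $\Lambda_\theta \circ \Tr_A = \Tr_B \circ S_\theta$ in the right direction (the other composite $S_\theta \circ \Tr_B$ does not even typecheck), and to justify pulling $\Lambda_\theta$ through the Riemann integral. Once Proposition \ref{prop:pre-det-commuting-diagrams} is in hand the corollary is essentially formal, and the only genuinely delicate point is matching the $n=1$ and $n=\infty$ levels of the pre-determinant via the stabilization identity when passing to $K_0$.
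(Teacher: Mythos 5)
Your proposal is correct and takes essentially the same route as the paper's proof: the triangle-plus-square decomposition through the intermediate map $\tD_{\Tr_B \circ S_\theta}^1$, with the triangle handled by pulling $\Lambda_\theta$ through the Banach-space-valued Riemann integral via $\Lambda_\theta \circ \Tr_A = \Tr_B \circ S_\theta$, the square handled by Proposition \ref{prop:pre-det-commuting-diagrams}(2) with $\tau = \Tr_B$, and the $K_0$-reformulation by the stabilization identity $\tD_\tau^m(\xi \oplus 1_{m-n}) = \tD_\tau^n(\xi)$. Your parenthetical observation that $S_\theta \circ \Tr_B$ does not typecheck even correctly flags a typo in the paper's own statement of the intertwining relation, which the paper's subsequent computation uses in the correct orientation.
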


\begin{prop}\label{prop:S-theta-is-a-lift}
Let $A,B$ be unital C*-algebras. Then the map $\Lambda_\theta$ is a lift of the map
\begin{equation}
\Aff T(A)/\tD_A^1(\pi_1(U^0(A))) \to \Aff T(B)/\tD_B^1(\pi_1(U^0(B)))
\end{equation}
as described in (\ref{eq:h-commute-pre-det}).
\begin{proof}
Let us label the maps in the diagram (\ref{eq:h-commute-pre-det}):
\begin{equation}
\begin{tikzcd}
U^0(A)/\ker \Delta^1_A \arrow[d, "\tilde\theta"'] \arrow[r, "\delta_A^1"] & \Aff T(A)/\tD_A^1(\pi_1(U^0(A))) \arrow[d, "P"] \\
U^0(B)/\ker \Delta^1_B \arrow[r, "\delta_B^1"']                      & \Aff T(B)/\tD_B^1(\pi_1(U^0(B)))                           
\end{tikzcd}
\end{equation}
where $\tilde\theta([u]) := [\theta(u)]$, $\delta_A^1([e^{2\pi i a}]) := [\hat{a}]$, $\delta_B^1$ is defined similarly, and
\begin{equation}
P = \delta_B^1 \circ \tilde\theta \circ (\delta_A^1)^{-1}.
\end{equation}
But then we have that
\begin{equation}
\begin{split}
P([a]) &= \delta_B^1 \circ \tilde\theta \circ (\delta_A^1)^{-1}([\hat{a}]) \\
&= \delta_B^1 \circ \tilde\theta([e^{2\pi i a}]) \\
&= \delta_B^1([\theta(e^{2\pi i a})]) \\
&= \delta_B^1([e^{2\pi i S_\theta(a)}]) \\
&= [\widehat{S_\theta(a)}] \\
&= [\Lambda_\theta(\widehat{a})]
\end{split}
\end{equation}
In particular, the diagram
\begin{equation}
\begin{tikzcd}
\Aff T(A) \arrow[d, "\Lambda_\theta"'] \arrow[r, "q_A^1"] & \Aff T(A)/\tD_A^1(\pi_1(U^0(A))) \arrow[d, "P"] \\
\Aff T(B) \arrow[r, "q_B^1"']                         & \Aff T(B)/\tD_B^1(\pi_1(U^0(B)))               
\end{tikzcd}
\end{equation}
commutes, where $q_A^1$ and $q_B^1$ are the respective quotient maps. 
\end{proof}
\end{prop}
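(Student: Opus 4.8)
The plan is to perform a diagram chase built from the explicit action of the de la Harpe--Skandalis determinant on exponential paths, together with the defining relation of $S_\theta$. First I would name the constituents of the induced map $P$ in (\ref{eq:h-commute-pre-det}): write $\delta_A^1: U^0(A)/\ker\Delta_A^1 \to \Aff T(A)/\tD_A^1(\pi_1(U^0(A)))$ for the isomorphism induced by $\Delta_A^1$, write $\delta_B^1$ for its $B$-counterpart, and $\tilde\theta([u]) := [\theta(u)]$ for the left-hand vertical map. This last map is well-defined precisely because Proposition \ref{prop:pre-det-commuting-diagrams}(3), applied with $\tau = \Tr_B$, gives $\theta(\ker\Delta_A^1) \subseteq \ker\Delta_B^1$, so that $P = \delta_B^1 \circ \tilde\theta \circ (\delta_A^1)^{-1}$.

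The one computation I would single out is that $\delta_A^1$ carries the class of an exponential $e^{2\pi i a}$ to the class of $\hat a$. To see this I would evaluate the pre-determinant along the straight path $\xi(t) = e^{2\pi i t a}$ from $1$ to $e^{2\pi i a}$: here $\xi'(t)\xi(t)^{-1} = 2\pi i a$ is constant, so (\ref{eq:predetDefn}) yields $\tD_{\Tr_A}^1(\xi) = \Tr_A(a) = \hat a$, whence $\delta_A^1([e^{2\pi i a}]) = [\hat a]$. Since every element of $\Aff T(A) \simeq A_{sa}/A_0$ is of the form $\hat a$, this simultaneously shows $\delta_A^1$ is surjective, hence an isomorphism, which legitimizes the use of $(\delta_A^1)^{-1}$.

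With these facts I would trace an arbitrary class $[\hat a]$ through $P$. Pulling back along $(\delta_A^1)^{-1}$ gives $[e^{2\pi i a}]$; applying $\tilde\theta$ and invoking the defining relation $\theta(e^{2\pi i a}) = e^{2\pi i S_\theta(a)}$ from Lemma \ref{lem:stone-theorem-cons} gives $[e^{2\pi i S_\theta(a)}]$; and pushing forward along $\delta_B^1$ gives $[\widehat{S_\theta(a)}]$. Because $\Lambda_\theta(\hat a) = \widehat{S_\theta(a)}$ by the construction in Proposition \ref{prop:induced-aff-map}, I conclude that $P([\hat a]) = [\Lambda_\theta(\hat a)]$, i.e.\ the square with the quotient maps $q_A^1$ and $q_B^1$ commutes, which is exactly the assertion that $\Lambda_\theta$ lifts $P$.

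I do not expect a genuine obstacle here, since the argument is essentially bookkeeping; the only delicate points are the well-definedness of $\tilde\theta$ (supplied by Proposition \ref{prop:pre-det-commuting-diagrams}(3)) and the identification $\delta_A^1([e^{2\pi i a}]) = [\hat a]$, which both makes $\delta_A^1$ invertible and drives the whole chase.
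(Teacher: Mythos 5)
Your proof is correct and takes essentially the same route as the paper: both arguments label the maps in (\ref{eq:h-commute-pre-det}) as $P = \delta_B^1 \circ \tilde\theta \circ (\delta_A^1)^{-1}$, chase a class $[\hat a]$ back to $[e^{2\pi i a}]$, apply the defining relation $\theta(e^{2\pi i a}) = e^{2\pi i S_\theta(a)}$, and push forward to get $P([\hat a]) = [\widehat{S_\theta(a)}] = [\Lambda_\theta(\hat a)]$. Your two added justifications---the well-definedness of $\tilde\theta$ via Proposition \ref{prop:pre-det-commuting-diagrams}(3) and the explicit evaluation of the pre-determinant along the straight path $\xi(t) = e^{2\pi i t a}$ showing $\delta_A^1([e^{2\pi i a}]) = [\hat a]$---are points the paper leaves implicit, and they only make the bookkeeping more complete.
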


\begin{prop}\label{prop:S-theta-is-a-lift-H}
Let $A,B$ be unital C*-algebras and $\theta: U^0(A) \to U^0(B)$ be a continuous group homomorphism. Then $\Lambda_\theta: \Aff T(A) \to \Aff T(B)$ is a lift of the map
\begin{equation}
\Aff T(A)/\ov{\tD_A^1(\pi_1(U^0(A)))} \to \Aff T(B)/\ov{\tD_B^1(\pi_1(U^0(B)))}
\end{equation}
as described in (\ref{eq:ch-commute}).
\begin{proof}
One can mimic the proof above or apply the above result and appeal to the commuting diagram 
\begin{equation}
\begin{tikzcd}
 \Aff T(A)/\tD_A^1(\pi_1(U^0(A))) \arrow[d] \arrow[r] &  \Aff T(A)/\ov{\tD_A^1(\pi_1(U^0(A)))} \arrow[d] \\
\Aff T(B)/\tD_B^1(\pi_1(U^0(B)))    \arrow[r]         & \Aff T(B)/\ov{\tD_B^1(\pi_1(U^0(B)))},           
\end{tikzcd}
\end{equation}
where the vertical maps are defined via the diagrams from (\ref{eq:h-commute-pre-det}) and (\ref{eq:ch-commute}) respectively, and the horizontal maps are the canonical surjections. 
\end{proof}
\end{prop}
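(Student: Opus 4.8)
The plan is to take the second of the two routes indicated, deducing the statement from Proposition~\ref{prop:S-theta-is-a-lift} by passing from the non-Hausdorffized quotients to their Hausdorffized counterparts. To fix notation I would write $q_A^1\colon \Aff T(A) \to \Aff T(A)/\tD_A^1(\pi_1(U^0(A)))$ and $\bar q_A^1\colon \Aff T(A) \to \Aff T(A)/\ov{\tD_A^1(\pi_1(U^0(A)))}$ for the two quotient maps, let $c_A$ be the canonical surjection between these two quotients (so that $\bar q_A^1 = c_A\circ q_A^1$, since $\tD_A^1(\pi_1(U^0(A))) \subseteq \ov{\tD_A^1(\pi_1(U^0(A)))}$), and adopt the analogous notation for $B$. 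Let $P$ denote the vertical map of Proposition~\ref{prop:S-theta-is-a-lift} and $\bar P$ the vertical map of (\ref{eq:ch-commute}).

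The only substantive point is the well-definedness of $\bar P$, and this is exactly where continuity of $\theta$ enters. Being a group homomorphism, $\theta$ sends $DU^0(A)$ into $DU^0(B)$; being continuous, it therefore sends $CU^0(A) = \ov{DU^0(A)}$ into $CU^0(B)$, so the induced map $\tilde\theta$ descends to $U^0(A)/CU^0(A) \to U^0(B)/CU^0(B)$, and Thomsen's isomorphism (Theorem~\ref{theorem:thomsen-iso}) transports this to the map $\bar P$. I would then check that the comparison square relating $P$, $\bar P$, $c_A$ and $c_B$ commutes; as in the computation of Proposition~\ref{prop:S-theta-is-a-lift}, both composites carry the class of $\hat a$ to the class of $\widehat{S_\theta(a)}$, so that $c_B\circ P = \bar P\circ c_A$.

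With these pieces in place the conclusion is a one-line diagram chase. Proposition~\ref{prop:S-theta-is-a-lift} gives $q_B^1\circ\Lambda_\theta = P\circ q_A^1$; composing on the left with $c_B$ and substituting $c_B\circ P = \bar P\circ c_A$ yields $\bar q_B^1\circ\Lambda_\theta = c_B\circ q_B^1\circ\Lambda_\theta = \bar P\circ c_A\circ q_A^1 = \bar P\circ\bar q_A^1$, which is precisely the assertion that $\Lambda_\theta$ is a lift of $\bar P$. Alternatively one can mimic the proof of Proposition~\ref{prop:S-theta-is-a-lift} verbatim, replacing each quotient by its Hausdorffized version and tracking $[e^{2\pi i a}]$ through $\tilde\theta$ to $[e^{2\pi i S_\theta(a)}]$; only the well-definedness check changes. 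I expect that well-definedness of $\bar P$ --- i.e.\ the containment $\theta(CU^0(A))\subseteq CU^0(B)$ --- is the single genuine obstacle, the remainder being the same formal manipulation already carried out in the non-closed setting.
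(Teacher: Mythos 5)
Your proof is correct and takes essentially the same route as the paper's own: the paper deduces the Hausdorffized statement from Proposition \ref{prop:S-theta-is-a-lift} by appealing to the commuting square whose horizontal arrows are the canonical surjections onto the Hausdorffized quotients, which is precisely your comparison square $c_B\circ P = \bar P\circ c_A$ followed by the same diagram chase. You have merely filled in the details (well-definedness of $\bar P$ via $\theta(CU^0(A))\subseteq CU^0(B)$, and the explicit chase) that the paper leaves implicit.
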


In particular, assuming some $K_0$-regularity gives that $\Lambda_\theta$ is a lift of a map between quotients of spaces of continuous real-valued affine functions by images of $K_0$.

\begin{cor}
Let $A,B$ be unital C*-algebras such that the canonical maps
\begin{equation}
\pi_1(U^0(A)) \to K_0(A) \text{ and }\pi_1(U^0(B)) \to K_0(B)
\end{equation}
are surjections. If $\theta: U^0(A) \to U^0(B)$ is a continuous homomorphism, then $\Lambda_\theta$ is a lift of the maps on the right of the following two commutative diagrams:
\begin{equation}
\begin{tikzcd}
U^0(A)/CU^0(A) \arrow[r, "\simeq"] \arrow[d] & \Aff T(A)/\ov{\rho_A(K_0(A))} \arrow[d] \\
U^0(B)/CU^0(B) \arrow[r, "\simeq"']          & \Aff T(B)/\ov{\rho_B(K_0(B))}  
\end{tikzcd}
\end{equation}
and
\begin{equation}
\begin{tikzcd}
U^0(A)/\ker \Delta_A^1 \arrow[r, "\simeq"] \arrow[d] & \Aff T(A)/\rho_A(K_0(A)) \arrow[d] \\
U^0(B)/\ker \Delta_B^1\arrow[r, "\simeq"']          & \Aff T(B)/\rho_B(K_0(B))    .    
\end{tikzcd}
\end{equation}
Further, if $\ker \Delta_A^1 = DU^0(A)$ and $\ker \Delta_B^1 = DU^0(B)$, then $\Lambda_\theta$ is a lift of the map induced by the diagram
\begin{equation}
\begin{tikzcd}
U^0(A)/DU^0(A) \arrow[r, "\simeq"] \arrow[d] & \Aff T(A)/\rho_A(K_0(A)) \arrow[d] \\
U^0(B)/DU^0(B)\arrow[r, "\simeq"']          & \Aff T(B)/\rho_B(K_0(B))    .    
\end{tikzcd}
\end{equation}
\end{cor}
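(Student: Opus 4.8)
The plan is to reduce the statement to the two lifting results already established, Propositions \ref{prop:S-theta-is-a-lift} and \ref{prop:S-theta-is-a-lift-H}, by showing that the surjectivity hypothesis forces the subgroups $\tD_A^1(\pi_1(U^0(A)))$ and $\rho_A(K_0(A))$ of $\Aff T(A)$ to coincide (and likewise for $B$). Once this identification is in place, both diagrams are obtained by rewriting results already in hand.

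First I would establish the identity $\tD_A^1(\pi_1(U^0(A))) = \rho_A(K_0(A))$. The canonical map $\pi_1(U^0(A)) \to K_0(A) = \pi_1(U_\infty^0(A))$ is induced by the inclusion $U^0(A) = U_1^0(A) \into U_\infty^0(A)$, sending a loop $\gamma$ to $t \mapsto \gamma(t) \oplus 1 \oplus \cdots$. The relation $\tD_\tau^m(\xi \oplus 1_{m-n}) = \tD_\tau^n(\xi)$ noted in the proof of Corollary \ref{cor:pairing} shows that $\tD_A^1$ of such a loop equals $\tD_A^\infty$ of its image in $U_\infty^0(A)$. Since under the identification $K_0(A) = \pi_1(U_\infty^0(A))$ the pre-determinant $\tD_A^\infty$ recovers the pairing $\rho_A$ (as recorded in the preliminaries, using Proposition \ref{prop:detFacts}(\ref{predetprojection}) on the projection loops that generate $K_0$), surjectivity of the canonical map gives $\tD_A^1(\pi_1(U^0(A))) = \tD_A^\infty(\pi_1(U_\infty^0(A))) = \rho_A(K_0(A))$; passing to closures then yields $\ov{\tD_A^1(\pi_1(U^0(A)))} = \ov{\rho_A(K_0(A))}$. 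The identical argument applies to $B$.

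With these identities the right-hand vertical maps of the first two diagrams are exactly the maps of Propositions \ref{prop:S-theta-is-a-lift-H} and \ref{prop:S-theta-is-a-lift}, after substituting $\rho_A(K_0(A))$ for $\tD_A^1(\pi_1(U^0(A)))$ (and the respective closures in the Hausdorffized diagram), so $\Lambda_\theta$ is a lift in each case. The horizontal isomorphisms require no new work: those in the first diagram are Thomsen's isomorphism (Theorem \ref{theorem:thomsen-iso}) at level $n=1$, rewritten through the identity above, while those in the second are induced by $\Delta_A^1$ and $\Delta_B^1$, which are surjective because $\Delta_A^1(e^{2\pi i a}) = [\hat a]$ already exhausts $\Aff T(A)/\rho_A(K_0(A))$ and whose kernels are $\ker\Delta_A^1$ and $\ker\Delta_B^1$ by definition.

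For the final diagram, the hypotheses $\ker\Delta_A^1 = DU^0(A)$ and $\ker\Delta_B^1 = DU^0(B)$ make its left column coincide with that of the second diagram, so the map it induces on the right is the same and is again lifted by $\Lambda_\theta$. I expect the only step needing genuine care to be the first one: leveraging surjectivity of the canonical map together with the level-compatibility of the pre-determinant to upgrade the level-one image to the full image $\rho_A(K_0(A))$. Everything downstream is a substitution into results already proved.
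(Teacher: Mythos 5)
Your proposal is correct and matches the route the paper intends: the corollary is exactly Propositions \ref{prop:S-theta-is-a-lift} and \ref{prop:S-theta-is-a-lift-H} (and the observation that (\ref{eq:h-commute}) is a special case of (\ref{eq:h-commute-pre-det})) once surjectivity of the canonical maps, homotopy invariance, and the compatibility $\tD_\tau^m(\xi \oplus 1_{m-n}) = \tD_\tau^n(\xi)$ are used to identify $\tD_A^1(\pi_1(U^0(A)))$ with $\rho_A(K_0(A))$ (and likewise for $B$, and for the closures). Your write-up simply makes explicit the image identification that the paper leaves implicit.
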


C*-algebras satisfying the last condition arise naturally -- for example unital, separable, simple, pure C*-algebras of stable rank 1 such that every 2-quasi\hyp{}tracial state on $A$ is a trace have this property \cite{NgRobert17}.

\section{The order on $\Aff T(\cdot)$}\label{section:ugh-order-on-aff-classification}

\hspace{\parindent}We now examine when the map induced on $\Aff T(\cdot)$ is positive in order to compare $K$-theory, traces, and the pairing. As we saw in Example \ref{example:nonpositive-example}, the map we get between spaces of affine functions on the trace simplices need not be positive nor unital in general. In this section, we will be able to use the map $\Lambda_\theta$ to construct a unital positive map, under some extra assumptions on $\theta$.

We record the following results as they give us necessary and sufficient conditions for the $\Lambda_\theta$ to be positive. We use the C*-algebra-valued analogue of the fact that any unital, contractive linear functional is positive, along with the fact that completely positive maps are (completely) bounded with the norm determined by the image of the unit. Recall that an operator system is a self-adjoint unital subspace of a C*-algebra. The following is a combination of Proposition 2.11, Theorem 3.9 and Proposition 3.6 in \cite{Paulsenbook}.

\begin{prop}\label{prop:abelian-operator-systems}
Let $\cS$ be an operator system and $B$ a unital C*-algebra.
\begin{enumerate}
\item If $\phi: \cS \to B$ is a unital contraction, then $\phi$ is positive. \label{unitalcontraction}
\item If $B = C(X)$ and $\phi: \cS \to B$ is positive, then it is bounded with $\|\phi\| = \|\phi(1)\|$. \label{abeliantarget}
\end{enumerate}
\end{prop}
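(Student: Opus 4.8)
The plan is to reduce both assertions to scalar-valued statements about linear functionals on the operator system $\cS$ and then prove those scalar facts by hand. Concretely, part (\ref{unitalcontraction}) will follow from the fact that a unital contractive functional $\cS \to \bC$ is automatically positive, composed with the states of $B$; and part (\ref{abeliantarget}) will follow from the fact that a positive functional $g$ on an operator system satisfies $\|g\| = g(1)$, composed with the point evaluations on $C(X)$, which are states. Throughout I will use that positive functionals are self-adjoint, i.e. $g(a^*) = \overline{g(a)}$.

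For part (\ref{unitalcontraction}) I would first handle the scalar case: a unital contraction $f \colon \cS \to \bC$ is positive. This has two steps. First, $f$ sends self-adjoint elements to reals: for $a = a^*$ with $\|a\| \le 1$ write $f(a) = \alpha + i\beta$ and test against $a + it\,1$, so that contractivity gives $\alpha^2 + (\beta + t)^2 \le \|a + it\,1\|^2 = \|a\|^2 + t^2 \le 1 + t^2$ for all real $t$; expanding yields $\alpha^2 + \beta^2 + 2\beta t \le 1$ for every $t$, which forces $\beta = 0$. Second, if $0 \le a \le 1$ then $\|1 - a\| \le 1$, so $|1 - f(a)| = |f(1 - a)| \le 1$, and since $f(a)$ is real this gives $f(a) \ge 0$; rescaling handles general positive $a$. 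To pass to the $B$-valued case I compose with states: for a state $\psi$ on $B$ the functional $\psi \circ \phi$ is unital (as $\phi(1) = 1_B$) and contractive, hence positive by the scalar case. For positive $a \in \cS$ this gives $\psi(\phi(a)) \ge 0$ for every state $\psi$, and the self-adjoint case shows $\psi(\phi(a)) \in \bR$ for all $\psi$, so $\phi(a)$ is self-adjoint; a self-adjoint element of $B$ nonnegative under every state is positive, so $\phi(a) \ge 0$.

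For part (\ref{abeliantarget}) I would use that each point evaluation $\ev_x \colon C(X) \to \bC$ is a state, so $\ev_x \circ \phi$ is a positive functional on $\cS$. The scalar lemma needed is $\|g\| = g(1)$ for positive $g$: for self-adjoint $a$ with $\|a\| \le 1$ one has $1 \pm a \ge 0$, whence $g(1 \pm a) \ge 0$ and $|g(a)| \le g(1)$; for general $a$ choose unimodular $\lambda$ with $\lambda g(a) = |g(a)|$ and set $b = \tfrac12(\lambda a + \overline{\lambda} a^*)$, a self-adjoint element of $\cS$ with $\|b\| \le \|a\|$ and $g(b) = \operatorname{Re}(\lambda g(a)) = |g(a)|$, so that $|g(a)| \le g(1)\|a\|$; equality $\|g\| = g(1)$ follows since $g(1) \le \|g\|$. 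Applying this to $g = \ev_x \circ \phi$ gives $\|\ev_x \circ \phi\| = \phi(1)(x)$, and therefore for $a \in \cS$,
\[
\|\phi(a)\| = \sup_{x \in X} |\ev_x(\phi(a))| \le \sup_{x \in X} \phi(1)(x)\,\|a\| = \|\phi(1)\|\,\|a\|,
\]
using that $\phi(1) \ge 0$. This yields $\|\phi\| \le \|\phi(1)\|$, and the reverse inequality $\|\phi(1)\| \le \|\phi\|$ is immediate, so $\|\phi\| = \|\phi(1)\|$.

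I expect the main obstacle to be the norm estimate $\|g\| = g(1)$ for a positive functional in part (\ref{abeliantarget}): because an operator system need not be closed under products, the usual Cauchy--Schwarz argument through $a^*a$ is unavailable, and one must instead rotate by a phase $\lambda$ and pass to the self-adjoint real part $\tfrac12(\lambda a + \overline{\lambda} a^*)$ to reduce to the self-adjoint estimate. The remaining ingredients---the perturbation trick establishing reality of $f$ on self-adjoints, and the reductions to the scalar case via states and point evaluations---are routine.
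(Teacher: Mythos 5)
Your proof is correct. Note that the paper itself gives no argument for this proposition: it simply cites Proposition 2.11, Theorem 3.9 and Proposition 3.6 of Paulsen's book, so there is no ``paper proof'' to match step by step. What you have written is, in essence, a self-contained reconstruction of exactly the arguments in that reference: part (1) is proved by reducing to the scalar case via states of $B$, with the reality of $f$ on self-adjoints obtained from the perturbation $a + it\,1$ and the identity $\|a+it\,1\|^2 = \|a\|^2 + t^2$ (valid since $a+it\,1$ is normal); part (2) is proved by composing with point evaluations, which are states on $C(X)$, and invoking the scalar fact $\|g\| = g(1)$ for positive functionals, where your phase-rotation to the self-adjoint element $\tfrac12(\lambda a + \overline{\lambda}a^*)$ correctly circumvents the unavailability of Cauchy--Schwarz on an operator system (this is precisely why the conclusion $\|\phi\| = \|\phi(1)\|$ needs the abelian target: for general targets one only gets $\|\phi\| \le 2\|\phi(1)\|$). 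The one assertion you use without proof --- that a positive functional on an operator system is self-adjoint --- is easily supplied: every self-adjoint $a \in \cS$ is a difference of positive elements of $\cS$, namely $a = \tfrac12(\|a\|1 + a) - \tfrac12(\|a\|1 - a)$, so $g$ is real on self-adjoints and hence $g(a^*) = \overline{g(a)}$ in general. With that remark included, your argument is a complete and correct proof of both statements.
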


\begin{lemma}\label{lem:inj-st-pm1}
Suppose $A,B$ are unital C*-algebras and $\theta: U^0(A) \to U^0(B)$ be a continuous group homomorphism such that $\theta(\bT) = \bT$. If $\theta|_\bT$ is injective, then $S_\theta(1) \in \{1,-1\}$. 
\begin{proof}
The restriction $\theta|_{\bT}: \bT \to \bT$ is a continuous group homomorphism, hence by Pontryagin duality is of the form $\theta(z) = z^n$ for some $n$. Injectivity implies that $n \in \{1,-1\}$. We then have that
\begin{equation}
e^{2\pi i S_\theta(1)t} = \theta(e^{2\pi i t}) = e^{2\pi i nt}
\end{equation}
for all $t \in \bR$. This implies that $S_\theta(1) = n\cdot 1 \in \{1,-1\}$. 
\end{proof}
\end{lemma}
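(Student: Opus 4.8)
The plan is to reduce everything to the classification of continuous endomorphisms of the circle group, together with the uniqueness clause of Lemma \ref{lem:stone-theorem-cons}. The starting point is to recognize that the copy of $\bT$ sitting inside $U^0(A)$ is precisely the one-parameter subgroup $\{e^{2\pi i t}\cdot 1_A : t \in \bR\}$ generated by the unit $1 \in A_{sa}$; under the identification $e^{2\pi i t \cdot 1_A} = e^{2\pi i t}\cdot 1_A$, the element whose image under $S_\theta$ we want to compute is exactly the generator of this circle. Thus computing $S_\theta(1)$ amounts to understanding how $\theta$ acts on this subgroup.

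First I would observe that the restriction $\theta|_{\bT} : \bT \to \bT$ is a continuous group homomorphism, its image landing in $\bT$ by the hypothesis $\theta(\bT) = \bT$. By Pontryagin duality -- equivalently, the classification of continuous characters of the circle -- every such endomorphism has the form $z \mapsto z^n$ for a unique $n \in \bZ$. Next I would invoke injectivity: the map $z \mapsto z^n$ has kernel the group of $n$-th roots of unity when $n \neq 0$, and is the constant map $z \mapsto 1$ when $n = 0$, so in either case injectivity fails unless $|n| = 1$. Hence the assumption that $\theta|_{\bT}$ is injective forces $n \in \{1,-1\}$.

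Finally I would match the two descriptions of $\theta$ on the circle. On one hand, the defining relation for $S_\theta$ gives $\theta(e^{2\pi i t \cdot 1_A}) = e^{2\pi i t S_\theta(1)}$; on the other, the previous steps give $\theta(e^{2\pi i t \cdot 1_A}) = (e^{2\pi i t})^n = e^{2\pi i t (n\cdot 1_B)}$. Equating these for all $t \in \bR$ and appealing to the uniqueness of the self-adjoint generator in Lemma \ref{lem:stone-theorem-cons} yields $S_\theta(1) = n\cdot 1_B \in \{1,-1\}$. I do not expect any serious obstacle here: the whole argument is bookkeeping built on the classification of circle endomorphisms and the uniqueness already established in Lemma \ref{lem:stone-theorem-cons}, and the only point requiring a little care is the identification of the embedded circle with the one-parameter group generated by $1_A$, so that $1 \in A_{sa}$ is genuinely the relevant generator.
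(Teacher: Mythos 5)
Your proof is correct and follows essentially the same route as the paper's: restrict to the circle, classify the endomorphism as $z \mapsto z^n$ by Pontryagin duality, use injectivity to force $n \in \{1,-1\}$, and equate one-parameter subgroups via the uniqueness in Lemma \ref{lem:stone-theorem-cons} to conclude $S_\theta(1) = n \cdot 1$. Your write-up merely makes explicit two small points the paper leaves implicit (the identification of the embedded circle with the one-parameter group generated by $1_A$, and the appeal to uniqueness of the Stone generator), which is fine.
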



In the sequel, we will be interested in the case that $T(A) \neq \emptyset$. The map $S_\theta$ will still descend to a map $\Lambda_\theta: A_{sa}/A_0 \to B_{sa}/B_0$ regardless of their being tracial states on either C*-algebra, but these quotients are zero if there are no traces. If we take the zero Banach space to be partially ordered with order unit trivially, then one can speak of unital and positive maps $\Aff T(A) \to \Aff T(B)$, regardless of whether $T(B)$ is empty or not (any map from a partially ordered Banach space with order unit to the zero Banach space will be unital and positive). However, if $T(A) = \emptyset$ and $T(B) \neq \emptyset$, then our map $\Lambda_\theta$ would have no chance of being unital. We identify the zero Banach space with $\Aff (\emptyset) = C_\bR(\emptyset)$ and with the complex analogues as well. However, the following lemma is true regardless of whether $T(A)$ is empty or not. 

\begin{lemma}\label{lem:tildeS-isometric}
Suppose $A,B$ are unital C*-algebras and $\theta: U^0(A) \to U^0(B)$ is a continuous group homomorphism such that $\theta(\bT) = \bT$. If $\theta$ is injective, the following are equivalent:
\begin{enumerate}
\item one of $\Lambda_\theta$ or $-\Lambda_\theta$ is positive;
\item $\Lambda_\theta$ is contractive.
\end{enumerate}
\begin{proof}
First suppose that $T(A),T(B) \neq \emptyset$. By Lemma \ref{lem:inj-st-pm1}, we know that $S_\theta(1) \in \{1,-1\}$ and consequently $\Lambda_\theta(\hat 1) \in \{\hat{1},\widehat{-1}\}$ (where we recall that, for $a \in A_{sa}$, $\hat{a} \in \Aff T(A)$ is the affine function $\hat{a}(\tau) = \tau(a)$). By replacing $\Lambda_\theta$ with $-\Lambda_\theta$, we can without loss of generality assume that $\Lambda_\theta$ is unital. Using the fact that $\Aff T(A) + i\Aff T(A) \subseteq C(T(A))$ is an operator system and the canonical extension
\begin{equation}
\Lambda_\theta^\bC: \Aff T(A) + i\Aff T(A) \to \Aff T(B) + i\Aff T(B) \subseteq C(T(B))
\end{equation}
is a unital linear map with abelian target algebra, this is an easy consequence of the two parts of Proposition \ref{prop:abelian-operator-systems}.

Finally, if $T(A) = \emptyset$, then any map from $\Aff T(A)$ is both contractive and positive trivially. The same is true of any map with codomain $\Aff T(B)$ if $T(B) = \emptyset$. 
\end{proof}
\end{lemma}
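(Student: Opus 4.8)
The plan is to transfer both conditions to the complexified map $\Lambda_\theta^{\bC}$ and then invoke the two halves of Proposition~\ref{prop:abelian-operator-systems}, whose hypotheses (abelian target; unital contraction) are tailor-made for exactly this situation. The assumptions $\theta(\bT)=\bT$ and injectivity of $\theta$ enter only through Lemma~\ref{lem:inj-st-pm1}: together they give $S_\theta(1)\in\{1,-1\}$, hence $\Lambda_\theta(\hat 1)\in\{\hat 1,\widehat{-1}\}$. Since the whole equivalence is invariant under $\Lambda_\theta\mapsto-\Lambda_\theta$ (both ``one of $\pm\Lambda_\theta$ is positive'' and ``$\Lambda_\theta$ is contractive'' are unchanged), I would first assume $T(A),T(B)\neq\emptyset$ and, renaming if necessary, take $\Lambda_\theta$ to be \emph{unital}, $\Lambda_\theta(\hat 1)=\hat 1$. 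Under this normalization $-\Lambda_\theta$ cannot be positive, since it would send $\hat 1$ to $\widehat{-1}\not\geq 0$; thus condition (1) reduces to the single assertion that $\Lambda_\theta$ itself is positive.

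Next I would complexify. Viewing $\Aff T(A)+i\Aff T(A)\subseteq C(T(A))$ and $\Aff T(B)+i\Aff T(B)\subseteq C(T(B))$ as operator systems, let $\Lambda_\theta^{\bC}(g+ih):=\Lambda_\theta(g)+i\Lambda_\theta(h)$ be the unique unital, self-adjoint $\bC$-linear extension. The positive cone of the operator system $\Aff T(A)+i\Aff T(A)$ is exactly $\{g\in\Aff T(A):g\geq 0\}$, so $\Lambda_\theta$ is positive if and only if $\Lambda_\theta^{\bC}$ is positive. The one point requiring a genuine computation --- and the step I expect to be the main obstacle --- is that complexification preserves the norm. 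This rests on the pointwise identity $|w|=\sup_{\phi}\operatorname{Re}(e^{i\phi}w)$, which yields, for $f=g+ih$ with $g,h\in\Aff T(A)$,
\begin{equation}
\|f\|_\infty=\sup_{\phi\in[0,2\pi)}\big\|\cos\phi\cdot g-\sin\phi\cdot h\big\|_\infty.
\end{equation}
Since $\operatorname{Re}\!\big(e^{i\phi}\Lambda_\theta^{\bC}(f)\big)=\Lambda_\theta(\cos\phi\cdot g-\sin\phi\cdot h)$, applying the same description to $\Lambda_\theta^{\bC}(f)$ and bounding termwise by $\|\Lambda_\theta\|$ gives $\|\Lambda_\theta^{\bC}\|\leq\|\Lambda_\theta\|$; the reverse inequality is immediate from restriction to the real part, so $\|\Lambda_\theta^{\bC}\|=\|\Lambda_\theta\|$.

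With these two observations the equivalence is short. For (2)$\Rightarrow$(1): if $\Lambda_\theta$ is contractive then $\Lambda_\theta^{\bC}$ is a unital contraction, so Proposition~\ref{prop:abelian-operator-systems}(\ref{unitalcontraction}) makes $\Lambda_\theta^{\bC}$ positive, whence its restriction $\Lambda_\theta$ is positive. For (1)$\Rightarrow$(2): if $\Lambda_\theta$ is positive then so is $\Lambda_\theta^{\bC}$, and because the target $C(T(B))$ is abelian, Proposition~\ref{prop:abelian-operator-systems}(\ref{abeliantarget}) gives $\|\Lambda_\theta^{\bC}\|=\|\Lambda_\theta^{\bC}(\hat 1)\|=\|\hat 1\|=1$, so $\Lambda_\theta=\Lambda_\theta^{\bC}|_{\Aff T(A)}$ is contractive. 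Finally I would record the degenerate cases: if $T(A)=\emptyset$ then $\Aff T(A)=\{0\}$ and every map out of it is vacuously both positive and contractive, and symmetrically if $T(B)=\emptyset$; in either situation both conditions hold and the equivalence is trivial.
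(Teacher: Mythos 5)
Your proposal is correct and follows essentially the same route as the paper: use Lemma \ref{lem:inj-st-pm1} to normalize $\Lambda_\theta$ to be unital, complexify to a self-adjoint unital map on the operator system $\Aff T(A)+i\Aff T(A)\subseteq C(T(A))$, and then apply the two parts of Proposition \ref{prop:abelian-operator-systems}, handling the empty-trace-simplex cases trivially. The only difference is that you spell out the details the paper leaves implicit --- notably the verification that complexification preserves the norm via $|w|=\sup_\phi\operatorname{Re}(e^{i\phi}w)$ and that positivity of $\Lambda_\theta$ is equivalent to positivity of $\Lambda_\theta^{\bC}$ --- which is a worthwhile elaboration but not a different argument.
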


\begin{lemma}\label{lem:lipschitzSbounded}
Suppose $A,B$ are unital C*-algebras and $\theta: U^0(A) \to U^0(B)$ is a continuous group homomorphism. Suppose that $T(A) \neq \emptyset$.
\begin{enumerate}
\item $\|\Lambda_\theta\| \leq \|S_\theta\|$.
\item If $K > 0$ is such that $\|\theta(u) - \theta(v)\| \leq K\|u - v\|$ for all $u,v \in U^0(A)$, then $\|S_\theta\| \leq K$ and $\|\Lambda_\theta\| \leq K$.
\item If $\theta$ is a homeomorphism, then $\Lambda_\theta$ is bijective with bounded inverse being $\Lambda_{\theta^{-1}}$.
\item  If $\theta$ is isometric, then so is $S_\theta$.
If $\theta$ is a surjective isometry, then $\Lambda_\theta$ is a surjective isometry.
\end{enumerate}

\begin{proof}
As $S_\theta(A_0) \subseteq B_0$, we have
\begin{equation}
\begin{split}
\|\Lambda_\theta(\hat{a})\|_{\Aff T(B)} &= \|\widehat{S_\theta(a)}\|_{\Aff T(B)} \\
&= \inf_{b \in B_0} \|S_\theta(a) + b\| \\
&\leq \inf_{b \in \theta(A_0)} \|S_\theta(a) + b\| \\
&= \inf_{a' \in A_0} \|S_\theta(a) + S_\theta(a')\| \\
&\leq \|S_\theta\|\inf_{a' \in A_0}\|a + a'\| \\
&= \|S_\theta\|\|\hat{a}\|_{\Aff T(A)}
\end{split}
\end{equation}
whenever $a \in A_{sa}$. This gives that $\|\Lambda_\theta\| \leq \|S_\theta\|$.

The fact that $S_\theta$ is an isometry whenever $\theta$ is an isometry follows from an argument in \cite{HatoriMolnar14};  we exemplify said argument to show the bound condition. We use the observation that
\begin{equation}
\frac{e^{2\pi i t a} - 1}{t} \to 2\pi i a
\end{equation}
as $t \to 0$. Since
\begin{equation}
\|e^{2\pi i t S_\theta(a)} - 1\| \leq K\|e^{2\pi i t a} - 1\| 
\end{equation}
for all $t \in \bR$, we can divide both sides by $\frac{1}{2\pi}|t|$ and take $t \to 0$ to get that
\begin{equation}
\|S_\theta(a)\| \leq K\|a\|.
\end{equation}
Thus $\|S_\theta\| \leq K$. It then follows from (1) that $\|\Lambda_\theta\| \leq K$ as well. 

If $\theta$ is a homeomorphism, $S_\theta$ and $S_{\theta^{-1}}$ are both defined and its clear that $S_\theta^{-1} = S_{\theta^{-1}}$.

The surjectivity of $\theta$ implies the surjectivity of $S_\theta$ and thus if $b \in B_{sa}$, we can find $a \in A_{sa}$ with $S_\theta(a) = b$. But then $\Lambda_\theta(\hat{a}) = \widehat{S_\theta(a)} = \hat{b}$. In particular, $\Lambda_\theta$ is surjective. Now if $\theta$ is a surjective isometry, we identify $\Aff T(A) \simeq A_{sa}/A_0$ and $\Aff T(B) \simeq B_{sa}/B_0$, noting that $S_\theta(A_0) = B_0$, and that $\Lambda_\theta$ will preserve the quotient norms. 
\end{proof}
\end{lemma}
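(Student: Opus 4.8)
The plan is to treat the four parts in order, throughout leaning on the quotient-norm description $\|\hat a\|_{\Aff T(A)} = \inf_{a' \in A_0}\|a + a'\|$ that comes from the identification $\Aff T(A) \simeq A_{sa}/A_0$, together with the inclusion $S_\theta(A_0) \subseteq B_0$ already established in Proposition \ref{prop:induced-aff-map}.

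For (1), I would estimate $\|\Lambda_\theta(\hat a)\|$ directly. Since $\Lambda_\theta(\hat a) = \widehat{S_\theta(a)}$, its quotient norm is $\inf_{b \in B_0}\|S_\theta(a) + b\|$. Because $S_\theta(A_0) \subseteq B_0$, the infimum over all of $B_0$ is at most the infimum over the subset $\{S_\theta(a') : a' \in A_0\}$, i.e. it is bounded above by $\inf_{a' \in A_0}\|S_\theta(a) + S_\theta(a')\|$. Factoring out $S_\theta$ and using the quotient norm on $A$ then gives $\|\Lambda_\theta(\hat a)\| \le \|S_\theta\|\,\|\hat a\|$, and taking the supremum over $a$ yields $\|\Lambda_\theta\| \le \|S_\theta\|$.

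For (2) and the first half of (4), the key is an infinitesimal argument that converts metric control of $\theta$ on exponentials into a norm bound on $S_\theta$; this I expect to be the main obstacle, since $S_\theta$ is only defined implicitly through the relation $\theta(e^{2\pi i t a}) = e^{2\pi i t S_\theta(a)}$. Using $\theta(1) = 1$ and the Lipschitz hypothesis I would write $\|e^{2\pi i t S_\theta(a)} - 1\| \le K\|e^{2\pi i t a} - 1\|$, divide both sides by $|t|/(2\pi)$, and let $t \to 0$, exploiting $(e^{2\pi i t a} - 1)/t \to 2\pi i a$ to obtain $\|S_\theta(a)\| \le K\|a\|$; hence $\|S_\theta\| \le K$, and $\|\Lambda_\theta\| \le K$ follows from (1). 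When $\theta$ is isometric the identical computation holds with equalities in place of inequalities (take $K = 1$), so the limit gives $\|S_\theta(a)\| = \|a\|$ and $S_\theta$ is isometric.

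For (3) and the second half of (4), I would exploit functoriality of the assignment $\theta \mapsto S_\theta$. If $\theta$ is a homeomorphism then $\theta^{-1}$ is again a continuous homomorphism, and uniqueness in Lemma \ref{lem:stone-theorem-cons} forces $S_\theta^{-1} = S_{\theta^{-1}}$, exactly as in Lemma \ref{lem:sa-bijective}(2). Since $S_\theta$ and $S_{\theta^{-1}}$ each carry the respective zero-trace subspace into the other, applying $S_\theta = S_{\theta^{-1}}^{-1}$ to $S_{\theta^{-1}}(B_0) \subseteq A_0$ upgrades the inclusion $S_\theta(A_0) \subseteq B_0$ to the equality $S_\theta(A_0) = B_0$. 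This equality is precisely what lets $S_\theta$ descend to a bijection $A_{sa}/A_0 \to B_{sa}/B_0$, so that $\Lambda_\theta$ is bijective with inverse $\Lambda_{\theta^{-1}}$, whose boundedness is (1) applied to $\theta^{-1}$. Finally, if $\theta$ is a surjective isometry it is in particular a homeomorphism, so $S_\theta(A_0) = B_0$ and $S_\theta$ is isometric; running the quotient-norm computation of (1) with equalities throughout then shows $\Lambda_\theta$ preserves the norms, while surjectivity of $\Lambda_\theta$ is immediate, since any $b = S_\theta(a)$ gives $\Lambda_\theta(\hat a) = \hat b$.
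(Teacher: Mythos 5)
Your proposal is correct and follows essentially the same route as the paper's proof: the quotient-norm estimate for (1), the infinitesimal argument $\frac{1}{t}(e^{2\pi i t a}-1) \to 2\pi i a$ for (2) and the isometry claim in (4), and the functoriality $S_{\theta^{-1}} = S_\theta^{-1}$ for (3) and the surjectivity statements. The only difference is that you spell out details the paper leaves implicit (e.g.\ upgrading $S_\theta(A_0) \subseteq B_0$ to $S_\theta(A_0) = B_0$ before descending to the quotients), which is a welcome clarification rather than a new idea.
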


\begin{cor}
Suppose $A,B$ are unital C*-algebras and $\theta: U^0(A) \to U^0(B)$ is a continuous group homomorphism. Suppose that $T(A) \neq \emptyset$. If $S_\theta(1) = n$ and $\|S_\theta\| = |n|$, then $\frac{1}{n}S_\theta$ is a unital contraction, hence positive. In particular, if $\theta(\bT) = \bT$ and $\theta|_{\bT}$ is an injection, then either $\Lambda_\theta$ or $-\Lambda_\theta$ is unital and positive. 
\begin{proof}
The first part follows from the above lemma. If $\theta$ is an injection with $\theta(\bT) = \bT$, we have that $S_\theta(\hat{1}) \in \{\hat{1},\widehat{-1}\}$ and that $\Lambda_\theta$ is contractive, so one of $\Lambda_\theta$ or $-\Lambda_\theta$ is a unital contraction, hence positive by part (1) of Proposition \ref{prop:abelian-operator-systems}. 
\end{proof}
\end{cor}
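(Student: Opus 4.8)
The plan is to handle the two assertions as instances of a single principle: a \emph{unital contraction into a C*-algebra is automatically positive} (Proposition \ref{prop:abelian-operator-systems}(\ref{unitalcontraction})). For the first assertion, set $\psi := \frac{1}{n}S_\theta\colon A_{sa}\to B_{sa}$. The hypothesis $S_\theta(1)=n\cdot 1$ gives $\psi(1)=1$ and the hypothesis $\|S_\theta\|=|n|$ gives $\|\psi\|=1$, so $\psi$ is a unital contraction; these are immediate. To extract positivity I would reduce to states: for each state $\omega$ of $B$ the functional $\omega\circ\psi\colon A_{sa}\to\bR$ is real-linear, unital, and contractive, and any such functional on an order-unit space is positive by the elementary one-dimensional argument --- if $0\le a\le 1$ then $\|1-2a\|\le 1$, so $|1-2\,\omega(\psi(a))|=|\omega(\psi(1-2a))|\le 1$ and hence $\omega(\psi(a))\ge 0$. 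As a self-adjoint element of $B$ is positive as soon as every state is nonnegative on it, and $\psi(a)\in B_{sa}$, this yields $\psi(a)\ge 0$ for every $a\ge 0$.

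For the second assertion I would specialise this to $n=\pm1$. The conditions $\theta(\bT)=\bT$ and $\theta|_{\bT}$ injective enter only through Lemma \ref{lem:inj-st-pm1}, which forces $S_\theta(1)=n\cdot 1$ with $n\in\{1,-1\}$, so $|n|=1$ and $\Lambda_\theta(\hat1)\in\{\hat1,\widehat{-1}\}$; after replacing $\Lambda_\theta$ by $-\Lambda_\theta$ if $n=-1$ we may take $\Lambda_\theta$ unital. We are then in the situation of the first assertion with $|n|=1$: the contractivity hypothesis $\|S_\theta\|=|n|$ reads $\|S_\theta\|=1$, whence $\|\Lambda_\theta\|\le\|S_\theta\|=1$ by Lemma \ref{lem:lipschitzSbounded}(1), so $\Lambda_\theta$ is a unital contraction. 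Running the argument of the first paragraph verbatim on the order-unit space $\Aff T(A)$, now composing with the point evaluations $\ev_\sigma$ ($\sigma\in T(B)$), which detect positivity in $\Aff T(B)$, shows that $\Lambda_\theta$ is positive; equivalently one applies Proposition \ref{prop:abelian-operator-systems}(\ref{unitalcontraction}) to the complexification $\Lambda_\theta^{\bC}$ into the abelian algebra $C(T(B))$. The degenerate cases $T(A)=\emptyset$ or $T(B)=\emptyset$ are trivial.

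The step I expect to be delicate is precisely \emph{unital contraction $\Rightarrow$ positive} in the first assertion, where the target $B$ is a general noncommutative C*-algebra. The tempting shortcut --- complexify $\psi$ to $\psi^{\bC}\colon A\to B$ and quote Proposition \ref{prop:abelian-operator-systems}(\ref{unitalcontraction}) directly --- is not available, because a self-adjoint map that is contractive on self-adjoint elements need not remain contractive after complexification when the target is noncommutative (a Hermitian map $\bC^2\to M_2$ can have norm $\sqrt2$ on self-adjoint elements yet norm $2$ overall). Passing to states of $B$ circumvents this, since for a Hermitian \emph{functional} the norm is already governed by its restriction to the self-adjoint part; for the second assertion the target $C(T(B))$ is abelian, so there the complexification is harmless and Proposition \ref{prop:abelian-operator-systems}(\ref{unitalcontraction}) applies as stated. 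I would finally emphasise that the positivity rests squarely on the contractivity $\|S_\theta\|=|n|$, which is the operative hypothesis carried over from the first assertion into the second; the circle conditions $\theta(\bT)=\bT$ and $\theta|_{\bT}$ injective serve only to pin $n$ to $\pm1$ via Lemma \ref{lem:inj-st-pm1}, and Example \ref{example:nonpositive-example}(2) shows that these alone do not force either sign of $\Lambda_\theta$ to be positive.
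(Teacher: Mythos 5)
Your proof is correct, and at the top level it follows the paper's own route: Lemma \ref{lem:inj-st-pm1} pins $S_\theta(1)=\pm 1$, Lemma \ref{lem:lipschitzSbounded}(1) gives $\|\Lambda_\theta\|\leq\|S_\theta\|=1$, and the conclusion rests on the principle that a unital contraction is positive. Where you genuinely diverge is in how you implement that principle for the first assertion. The paper simply writes ``unital contraction, hence positive,'' implicitly citing Proposition \ref{prop:abelian-operator-systems}(\ref{unitalcontraction}); but that proposition is stated for (complex) operator systems, whereas $\frac{1}{n}S_\theta$ is only given as a real-linear map $A_{sa}\to B_{sa}$ with $\|S_\theta\|=|n|$ measured on self-adjoints, and $B$ is noncommutative. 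Your observation that the naive complexification need not stay contractive is a real phenomenon (your $\bC^2\to M_2$ example with norms $\sqrt{2}$ versus $2$ is accurate), so your detour through states of $B$ --- using that a Hermitian functional's norm is controlled by its restriction to self-adjoints, via the elementary $\|1-2a\|\leq 1$ computation --- supplies a justification the paper's two-line proof glosses over. For the second assertion you then recover exactly the paper's mechanism (as in Lemma \ref{lem:tildeS-isometric}): complexify into the abelian target $C(T(B))$, where the issue evaporates, or equivalently test against the point evaluations $\mathrm{ev}_\sigma$, $\sigma\in T(B)$. Your closing remarks are also on target: the norm hypothesis $\|S_\theta\|=|n|$ is the operative one carried through the ``in particular'' clause, and Example \ref{example:nonpositive-example}(2) (where $\theta|_\bT$ is the identity but $\|S_\theta\|=3$) confirms the circle conditions alone do not suffice. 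In short: same skeleton as the paper, but your state-space argument patches a small imprecision in how the paper invokes Proposition \ref{prop:abelian-operator-systems}(\ref{unitalcontraction}) for a noncommutative codomain.
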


\begin{theorem}\label{thm:affine-map-from-injection}
Suppose $A,B$ are unital C*-algebras and $\theta: U^0(A) \to U^0(B)$ is a contractive injective homomorphism such that $\theta(\bT) = \bT$. Suppose that $T(A) \neq \emptyset$. Then there is a continuous affine map $T_\theta: T(B) \to T(A)$ such that $\Lambda_\theta(f) = f \circ T_\theta$ or $-\Lambda_\theta(f) = f \circ T_\theta$, depending on whether $\Lambda_\theta$ or $-\Lambda_\theta$ is positive.
\begin{proof}
This follows from the fact that the induced map $\Lambda_\theta: \Aff T(A) \to \Aff T(B)$ will have the property that $\Lambda_\theta$ or $-\Lambda_\theta$ will be a  unital positive map. Therefore by contravariant identification of compact convex sets (of locally convex Hausdorff linear spaces) with the state space of the space of continuous real-valued affine valued functions on them (\cite[Chapter 7]{Goodearlbook}), there exists a continuous affine map $T_\theta: T(B) \to T(A)$. 
\end{proof}
\end{theorem}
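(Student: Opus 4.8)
The plan is to reduce the statement to a standard duality: a unital positive map between the order-unit spaces $\Aff T(A)$ and $\Aff T(B)$ is dual to a continuous affine map between the simplices $T(B)$ and $T(A)$, via the identification of each simplex with the state space of its space of affine functions. Almost all of the genuine content has already been extracted in the preceding lemmas; what remains is to assemble it and apply the duality.

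First I would fix the sign. Since $\theta$ is contractive, Lemma \ref{lem:lipschitzSbounded}(2) with $K=1$ gives $\|\Lambda_\theta\|\le 1$. As $\theta$ is injective, so is its restriction to $\bT$, whence Lemma \ref{lem:inj-st-pm1} gives $S_\theta(1)\in\{1,-1\}$ and therefore $\Lambda_\theta(\hat 1)\in\{\hat 1,\widehat{-1}\}$. Let $\gamma\in\{\Lambda_\theta,-\Lambda_\theta\}$ be the choice that is unital. Being both unital and contractive, $\gamma$ is positive: this is the implication (2)$\Rightarrow$(1) of Lemma \ref{lem:tildeS-isometric}, or directly Proposition \ref{prop:abelian-operator-systems}(1) applied to the complexification $\gamma^\bC:\Aff T(A)+i\Aff T(A)\to C(T(B))$. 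It then suffices to produce a continuous affine map $T_\theta: T(B)\to T(A)$ with $\gamma(f)=f\circ T_\theta$ for all $f$, since this is exactly the asserted conclusion with the correct sign.

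Next I would dualize $\gamma$. By \cite[Chapter 7]{Goodearlbook}, $\Aff T(A)$ is an order-unit space and $\tau\mapsto \ev_\tau$ (where $\ev_\tau(f)=f(\tau)$) is an affine homeomorphism of $T(A)$ onto the state space of $\Aff T(A)$ in its weak* topology, and similarly for $B$. For $\sigma\in T(B)$ the functional $f\mapsto \ev_\sigma\circ\gamma(f)=\gamma(f)(\sigma)$ on $\Aff T(A)$ is linear, unital (as $\gamma(\hat 1)=\hat 1$), and positive (as $\gamma$ and $\ev_\sigma$ are), hence a state; by the identification above there is a unique $T_\theta(\sigma)\in T(A)$ with $\ev_\sigma\circ\gamma=\ev_{T_\theta(\sigma)}$, that is, $\gamma(f)(\sigma)=f(T_\theta(\sigma))$ for every $f$. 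This is precisely $\gamma(f)=f\circ T_\theta$. The assignment $\sigma\mapsto\ev_\sigma\circ\gamma$ is affine and weak*-continuous (from the corresponding properties of $\sigma\mapsto\ev_\sigma$ and precomposition by the bounded $\gamma$), so transporting through the two homeomorphisms shows $T_\theta$ is affine and continuous.

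I do not expect the duality step to be the obstacle — it is routine once the sign is settled. The real work is entirely upstream, in guaranteeing that one of $\pm\Lambda_\theta$ is actually \emph{unital and positive} rather than merely bounded; this is exactly where the hypotheses ``$\theta$ contractive'' and ``$\theta(\bT)=\bT$ with $\theta$ injective'' are consumed, through Lemma \ref{lem:inj-st-pm1} and the operator-system positivity criterion of Proposition \ref{prop:abelian-operator-systems}. I would also note the degenerate case $T(B)=\emptyset$: then $\Aff T(B)=0$, $\Lambda_\theta=0$, and $T_\theta$ is the empty map $\emptyset\to T(A)$, for which $\gamma(f)=f\circ T_\theta$ holds vacuously.
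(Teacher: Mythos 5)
Your proof is correct and takes essentially the same route as the paper's: fix the sign so that one of $\pm\Lambda_\theta$ is unital (Lemma \ref{lem:inj-st-pm1}) and contractive (Lemma \ref{lem:lipschitzSbounded}), deduce positivity via Lemma \ref{lem:tildeS-isometric}/Proposition \ref{prop:abelian-operator-systems}, and then apply the contravariant duality between compact convex sets and state spaces of their order-unit spaces of affine functions. Your write-up only differs in spelling out the dualization $\sigma\mapsto \ev_\sigma\circ\gamma$ and the degenerate case $T(B)=\emptyset$, which the paper leaves implicit.
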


\begin{theorem}\label{thm:trace-result}
Suppose $A,B$ be unital C*-algebras and $\theta: U^0(A) \to U^0(B)$ is a contractive topological group isomorphism such that $\theta(\bT) = \bT$. Suppose that $T(A) \neq \emptyset$. Then the map $T_\theta: T(B) \to T(A)$ induced by $\Lambda_\theta$ is an affine homeomorphism.
\begin{proof}
As $\theta(\bT) = \bT$, $S_\theta(1) \in \{-1,1\}$. Let $\pm \Lambda_\theta: \Aff T(A) \to \Aff T(B)$ be either $\Lambda_\theta$ or $-\Lambda_\theta$, depending on which is unital, positive, contractive and surjective by combining Lemmas \ref{lem:lipschitzSbounded}, \ref{lem:tildeS-isometric} and \ref{lem:sa-bijective}(2). By the duality of (compact) simplices and continuous affine functions on them, the map $T_\theta: T(B) \to T(A)$ is an affine homeomorphism.
\end{proof}
\end{theorem}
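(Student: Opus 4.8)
The plan is to read off the order-theoretic structure of $\Lambda_\theta$ from the lemmas already proved, promote it to an order isomorphism, and then invoke the contravariant duality between Choquet simplices and their spaces of continuous affine functions. First I would fix the sign: since $\theta$ is injective and $\theta(\bT) = \bT$, Lemma \ref{lem:inj-st-pm1} gives $S_\theta(1) \in \{1,-1\}$, so exactly one of $\Lambda_\theta, -\Lambda_\theta$ is unital; write $\pm\Lambda_\theta$ for that one. Because $\theta$ is a homeomorphism, Lemma \ref{lem:sa-bijective}(2) shows $S_\theta$ is bijective and Lemma \ref{lem:lipschitzSbounded}(3) shows $\Lambda_\theta$ (hence $\pm\Lambda_\theta$) is a bounded linear bijection with inverse $\Lambda_{\theta^{-1}}$. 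Contractivity of $\theta$ gives $\|\Lambda_\theta\| \le 1$ by Lemma \ref{lem:lipschitzSbounded}(2) with $K=1$, so $\pm\Lambda_\theta$ is a \emph{unital contraction}; by Lemma \ref{lem:tildeS-isometric} (equivalently, applying Proposition \ref{prop:abelian-operator-systems}(\ref{unitalcontraction}) to its complexification $\Lambda_\theta^\bC$) it is therefore positive. At this stage $\pm\Lambda_\theta$ is a unital, positive, bijective map $\Aff T(A) \to \Aff T(B)$.

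The heart of the argument, and the step I expect to be the real obstacle, is upgrading this \emph{positive bijection} to an \emph{order isomorphism}: a unital positive bijection between order-unit spaces need not have a positive inverse, and merely knowing $\pm\Lambda_\theta$ is onto does not by itself force the induced map on state spaces to be onto. The route I would take is to use that $\theta$, being a contractive topological group \emph{isomorphism}, is in fact a surjective isometry: its inverse $\theta^{-1}$ is again a contraction, so for all $u,v$ one has $\|u-v\| = \|\theta^{-1}\theta(u) - \theta^{-1}\theta(v)\| \le \|\theta(u)-\theta(v)\| \le \|u-v\|$, forcing equality throughout. Lemma \ref{lem:lipschitzSbounded}(4) then makes $\pm\Lambda_\theta$ a unital \emph{surjective isometry}. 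Its inverse $\pm\Lambda_{\theta^{-1}}$ is then also a unital isometry, in particular a unital contraction, hence positive by the same operator-system argument; so $\pm\Lambda_\theta$ is bipositive, i.e.\ an order isomorphism of $\Aff T(A)$ onto $\Aff T(B)$.

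Finally I would feed this order isomorphism into duality. Identifying $T(A)$ and $T(B)$ with the state spaces of the order-unit spaces $\Aff T(A)$ and $\Aff T(B)$ via the contravariant correspondence of \cite[Chapter 7]{Goodearlbook}, the rule $T_\theta(\tau) := \tau \circ (\pm\Lambda_\theta)$ sends a state $\tau \in T(B)$ to a state on $\Aff T(A)$, that is, to an element of $T(A)$, and is weak-$*$ continuous and affine. Surjectivity of $\pm\Lambda_\theta$ makes $T_\theta$ injective, positivity of $(\pm\Lambda_\theta)^{-1}$ makes $T_\theta$ surjective, and continuity of the inverse is then automatic, since $T_\theta$ is a continuous bijection from the compact space $T(B)$ onto the Hausdorff space $T(A)$; thus $T_\theta$ is the desired affine homeomorphism, and it is precisely the map induced by $\Lambda_\theta$. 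The only genuinely delicate point in this chain is the bipositivity of $\pm\Lambda_\theta$, equivalently the surjectivity of $T_\theta$: it is exactly here that one must exploit the full strength of $\theta$ being a contractive \emph{isomorphism} rather than a mere contractive injection, and this is what separates Theorem \ref{thm:trace-result} from Theorem \ref{thm:affine-map-from-injection}.
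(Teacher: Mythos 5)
Your proposal tracks the paper's own proof for most of its length: the sign normalization via Lemma \ref{lem:inj-st-pm1}, surjectivity/bijectivity of $\Lambda_\theta$ from Lemma \ref{lem:sa-bijective}(2) and Lemma \ref{lem:lipschitzSbounded}(3), contractivity from Lemma \ref{lem:lipschitzSbounded}(2) with $K=1$, positivity from Lemma \ref{lem:tildeS-isometric}, and then the Goodearl duality between compact convex sets and their spaces of continuous affine functions. Moreover, the issue you isolate as ``the heart of the argument'' is a legitimate one that the paper's one-line appeal to duality glosses over: a unital positive contractive linear bijection between order-unit spaces need not be an order isomorphism, and without positivity of the inverse the induced map on state spaces need not be onto. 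For instance, $\Lambda(x,y)=(\tfrac{x+y}{2},\,y)$ on $\bR^2=\Aff T(\bC^2)$ (coordinatewise order, unit $(1,1)$) is unital, positive, contractive and bijective, yet its inverse $(u,v)\mapsto(2u-v,\,v)$ is not positive, and the dual map on the state simplex is injective but not surjective. So surjectivity of $T_\theta$ really is equivalent to bipositivity of $\pm\Lambda_\theta$, exactly as you say.

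The problem is that your repair of this step does not work under the stated hypotheses. You assert that, $\theta$ being a contractive topological group isomorphism, ``its inverse $\theta^{-1}$ is again a contraction,'' and from this deduce that $\theta$ is a surjective isometry, so that Lemma \ref{lem:lipschitzSbounded}(4) applies. But a topological group isomorphism only has a \emph{continuous} inverse; nothing in the hypothesis or in the paper makes $\theta^{-1}$ contractive, and your displayed chain of inequalities $\|u-v\|=\|\theta^{-1}\theta(u)-\theta^{-1}\theta(v)\|\leq\|\theta(u)-\theta(v)\|\leq\|u-v\|$ presupposes precisely the 1-Lipschitz property of $\theta^{-1}$ that is in question -- the argument is circular. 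The alternative routes inside the paper's toolkit stall at the same point: applying Lemma \ref{lem:tildeS-isometric} to the continuous isomorphism $\theta^{-1}$ would give positivity of $\pm\Lambda_{\theta^{-1}}=\pm\Lambda_\theta^{-1}$ only if one already knew $\Lambda_{\theta^{-1}}$ to be contractive, and Lemma \ref{lem:lipschitzSbounded}(2) applied to $\theta^{-1}$ requires a Lipschitz constant for $\theta^{-1}$ that is not available. Your argument becomes complete if the hypothesis is strengthened to $\theta$ isometric (or to $\theta$ and $\theta^{-1}$ both contractive), in which case Lemma \ref{lem:lipschitzSbounded}(4) delivers a unital surjective isometry whose inverse is a unital isometry, hence positive, and your duality argument finishes cleanly; but as a proof of the theorem as stated, the bipositivity step -- the very step you correctly flag as the crux -- remains open.
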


\begin{theorem}\label{thm:KTu-morphism-from-injection}
Let $A,B$ be unital C*-algebras and $\theta: U(A) \to U(B)$ be a contractive injective homomorphism such that $\theta(\bT) = \bT$. Suppose that $T(A) \neq \emptyset$.  If 
\begin{equation}\label{eq:k-regular-KTu-morph}
\pi_i(U(A)) \simeq K_{i-1}(A)\text{ and } \pi_i(U(B)) \simeq K_{i-1}(B) \text{ for } i=0,1,\footnote{Note that $\pi_1(U(A)) = \pi_1(U^0(A))$ and $\pi_1(U(B)) = \pi_1(U^0(B))$ since we are taking our base point to be the identity in each case.}
\end{equation}
via the canonical maps, then there is an induced map
\begin{equation}
KT_u(\theta): KT_u(A) \to \KT_u(B).
\end{equation}
\begin{proof}
Let
\begin{itemize}
\item $\Lambda:= \Lambda_{\theta|_{U^0(A)}}: \Aff T(A) \to \Aff T(B)$,
\item $\theta_0: \pi_1(U^0(A)) \to \pi_1(U^0(B))$ be the map induced on fundamental groups by $\theta|_{U^0(A)}$,
\item $K_0(\theta)$ be the map induced on $K_0$ by $\theta_0$ together with (\ref{eq:k-regular-KTu-morph}) for $i = 1$,
\item $\theta_1: \pi_0(U(A)) \to \pi_0(U(B))$ be the map induced by $\theta$ on connected components (so that $\theta_1([u]_{\sim_h}) = [\theta_1(u)]_{\sim_h}$) and
\item  $K_1(\theta)$ be the map induced by $\theta_1$ together with (\ref{eq:k-regular-KTu-morph}) for $i=0$. 
\end{itemize}
Then
\begin{equation}
(\pm K_0(\theta),K_1(\theta),\pm \Lambda): KT_u(A) \to KT_u(B)
\end{equation}
is a $KT_u$-morphism, where $\pm \Lambda$ is either $\Lambda$ or $-\Lambda$ depending on which one is unital and positive, and $\pm K_0(\theta)$ is either $K_0(\theta)$ if $\Lambda$ is positive or $-K_0(\theta)$ if $-\Lambda$ is positive.  Indeed, $\pm K_0(\theta),\theta_1,\pm \Lambda$ are all appropriate morphisms, and we have that
\begin{equation}
\begin{tikzcd}
K_0(A) \arrow[d, "\pm K_0(\theta)"'] \arrow[r, "\rho_A"] & \Aff T(A) \arrow[d, "\pm \Lambda"] \\
K_0(B) \arrow[r, "\rho_B"']                            & \Aff T(B)                              
\end{tikzcd} 
\end{equation}
commutes\footnote{The map $-K_0(\theta)$ will take a piece-wise smooth loop $\xi$ to the loop $-\theta \circ \xi$ defined by $(-\theta \circ \xi)(t) = \theta(\xi(-t))$. From here its obvious that the diagram commutes.} by Corollary \ref{cor:pairing}.
\end{proof}
\end{theorem}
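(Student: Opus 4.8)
The plan is to assemble the required $KT_u$-morphism out of the three pieces $K_0(\theta)$, $K_1(\theta)$ and $\Lambda_\theta$ already constructed in the preceding sections, and then to show that a \emph{single} consistent sign choice simultaneously makes the $K_0$-component unit-preserving and the affine component unital and positive. First I would restrict $\theta$ to $U^0(A)$: since $\theta$ is continuous it maps $U^0(A)$ into $U^0(B)$, and the restriction $\theta|_{U^0(A)}$ remains a contractive injective homomorphism with $\theta(\bT)=\bT$. This produces $\Lambda := \Lambda_{\theta|_{U^0(A)}}\colon \Aff T(A)\to\Aff T(B)$ via Proposition \ref{prop:induced-aff-map}.

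Next I would produce the two $K$-theoretic maps. For $K_0$, the restriction $\theta|_{U^0(A)}$ induces $\pi_1(\theta)\colon \pi_1(U^0(A))\to\pi_1(U^0(B))$; transporting along the isomorphisms of (\ref{eq:k-regular-KTu-morph}) for $i=1$ gives $K_0(\theta)\colon K_0(A)\to K_0(B)$. For $K_1$, the full map $\theta\colon U(A)\to U(B)$ induces $\pi_0(\theta)\colon \pi_0(U(A))\to\pi_0(U(B))$ and, via (\ref{eq:k-regular-KTu-morph}) for $i=0$, a homomorphism $K_1(\theta)\colon K_1(A)\to K_1(B)$; since the pairing in $KT_u(\cdot)$ involves only $K_0$, no further compatibility is demanded of $K_1(\theta)$. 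Finally, since $\theta$ is contractive we get $\|\Lambda\|\le 1$ by Lemma \ref{lem:lipschitzSbounded}, and since $\theta$ is injective with $\theta(\bT)=\bT$, Lemma \ref{lem:tildeS-isometric} (or the corollary following it) shows that exactly one of $\Lambda$ or $-\Lambda$ is unital and positive.

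The crux is to pin down the sign and show it is compatible across all three data. By Lemma \ref{lem:inj-st-pm1} the restriction $\theta|_{\bT}$ has the form $z\mapsto z^n$ with $n=S_\theta(1)\in\{1,-1\}$, and $\Lambda$ (resp. $-\Lambda$) is the unital positive choice precisely when $n=1$ (resp. $n=-1$). I would then compute the action of $\pi_1(\theta)$ on the class of the unit: under $K_0(A)\simeq\pi_1(U^0(A))$, the class $[1_A]_0$ is represented by the scalar loop $t\mapsto e^{2\pi i t}1_A$, which is exactly the path of Proposition \ref{prop:detFacts}(\ref{predetprojection}) for $p=1_A$, and
\begin{equation}
\theta\big(e^{2\pi i t}1_A\big)=e^{2\pi i n t}1_B,
\end{equation}
which represents $n[1_B]_0$ in $\pi_1(U^0(B))\simeq K_0(B)$. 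Hence $K_0(\theta)([1_A]_0)=n[1_B]_0$, so $\pm K_0(\theta)$ preserves the unit exactly when $\pm=\sgn(n)$ --- the same sign that makes $\pm\Lambda$ positive. Thus the signs match automatically.

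It then remains to verify the pairing square for the chosen sign. Corollary \ref{cor:pairing} gives $\rho_B\circ K_0(\theta)=\Lambda\circ\rho_A$, and multiplying both vertical maps by $-1$ preserves commutativity, so
\begin{equation}
\rho_B\circ(\pm K_0(\theta))=(\pm\Lambda)\circ\rho_A
\end{equation}
for the common sign. Together with the facts that $\pm\Lambda$ is unital and positive, that $\pm K_0(\theta)$ preserves the unit, and that $K_1(\theta)$ is a group homomorphism, this exhibits $(\pm K_0(\theta),K_1(\theta),\pm\Lambda)$ as a $KT_u$-morphism. I expect the main obstacle to be precisely this sign bookkeeping: one must confirm that the \emph{same} integer $n=S_\theta(1)$ governs both the positivity of the affine map and the behaviour of $K_0(\theta)$ on the unit, which is what the scalar-loop computation above secures.
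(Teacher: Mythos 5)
Your proposal follows the paper's proof essentially verbatim: restrict $\theta$ to $U^0(A)$ to obtain $\Lambda$, transport $\pi_1(\theta)$ and $\pi_0(\theta)$ along the hypothesized isomorphisms to get $K_0(\theta)$ and $K_1(\theta)$, choose the sign making $\pm\Lambda$ unital and positive, and invoke Corollary \ref{cor:pairing} (with both vertical maps negated if necessary) for the pairing square. The one place you go beyond the paper is the scalar-loop computation showing $K_0(\theta)([1_A]_0) = n[1_B]_0$ with $n = S_\theta(1) \in \{1,-1\}$, which verifies explicitly that the sign making $\pm\Lambda$ positive is the same sign making $\pm K_0(\theta)$ unit-preserving --- a necessary check (since $\rho_B$ need not be injective, it does not follow from the commuting square alone) that the paper compresses into the unproved assertion that $\pm K_0(\theta),\theta_1,\pm\Lambda$ ``are all appropriate morphisms.''
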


\begin{cor}\label{cor:KTu-isomorphism}
Let $A,B$ be unital C*-algebras, $\theta: U^0(A) \to U^0(B)$ is a contractive topological group isomorphism such that $\theta(\bT) = \bT$, and suppose that $T(A) \neq \emptyset$.  If 
\begin{equation}
\pi_i(U(A)) \simeq K_{i-1}(A)\text{ and } \pi_i(U(B)) \simeq K_{i-1}(B) \text{ for } i=0,1,
\end{equation}
via the canonical maps, then $KT_u(A) \simeq KT_u(B)$.
\begin{proof}
By Corollary \ref{thm:KTu-morphism-from-injection}, we have an induced $KT_u$-morphism. This map is necessarily an isomorphism since $\theta$ is. 
\end{proof}
\end{cor}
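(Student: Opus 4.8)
The plan is to leverage the machinery already assembled for injective contractive homomorphisms and simply observe that a topological group isomorphism $\theta$ is in particular an injective homomorphism, so Corollary \ref{thm:KTu-morphism-from-injection} applies directly to produce a $KT_u$-morphism $KT_u(\theta): KT_u(A) \to KT_u(B)$. The remaining work is to argue that this morphism is an isomorphism of invariants. First I would note that the hypotheses are symmetric in $A$ and $B$: since $\theta$ is a topological group isomorphism, its inverse $\theta^{-1}: U^0(B) \to U^0(A)$ is also a continuous group homomorphism, and because $\theta$ is contractive with $\theta(\bT)=\bT$, Lemma \ref{lem:lipschitzSbounded}(3) (together with the fact that $\theta^{-1}$ must also satisfy $\theta^{-1}(\bT)=\bT$) lets us form the companion morphism $KT_u(\theta^{-1}): KT_u(B) \to KT_u(A)$.

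Next I would verify that $KT_u(\theta)$ and $KT_u(\theta^{-1})$ are mutually inverse on each component of the invariant. On the $\Aff T(\cdot)$ component, Lemma \ref{lem:lipschitzSbounded}(3) already gives that $\Lambda_\theta$ is bijective with inverse $\Lambda_{\theta^{-1}}$, so the affine pieces compose to the identity. On the $K_0$ and $K_1$ components, the induced maps $K_i(\theta)$ arise functorially from the maps $\pi_1(\theta)$ and $\pi_0(\theta)$ on homotopy, which are themselves invertible with inverses induced by $\theta^{-1}$, since the canonical identifications (\ref{eq:k-regular-KTu-morph}) are natural. Thus $K_i(\theta) \circ K_i(\theta^{-1}) = \id$ and vice versa. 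The one point requiring care is the sign ambiguity: the morphism is only determined up to replacing $(\Lambda, K_0(\theta))$ by $(-\Lambda, -K_0(\theta))$, but the sign is pinned down consistently by which of $\Lambda_\theta$ or $-\Lambda_\theta$ is positive, and the same sign choice governs the inverse, so the signs cancel when composing.

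I expect the main obstacle to be purely bookkeeping rather than conceptual: one must confirm that the sign chosen for $\theta$ and the sign chosen for $\theta^{-1}$ are compatible, i.e.\ that if $\Lambda_\theta$ (rather than $-\Lambda_\theta$) is positive, then $\Lambda_{\theta^{-1}}$ is positive as well, so that the two morphisms genuinely compose to the identity rather than to a sign-twist. This follows because $S_{\theta^{-1}} = S_\theta^{-1}$ by Lemma \ref{lem:lipschitzSbounded}(3), and positivity of $\Lambda_\theta$ forces $S_\theta(1)=1$ (as opposed to $-1$) via Lemma \ref{lem:inj-st-pm1}, which in turn forces $S_{\theta^{-1}}(1)=1$ and hence positivity of $\Lambda_{\theta^{-1}}$. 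Once this compatibility is in hand, the proof reduces to the one-line observation in the author's proof: the induced $KT_u$-morphism is an isomorphism because $\theta$ is, with explicit inverse $KT_u(\theta^{-1})$.
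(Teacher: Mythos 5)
Your overall strategy coincides with the paper's: apply Theorem \ref{thm:KTu-morphism-from-injection} to get $KT_u(\theta)$, then invert it using $\theta^{-1}$ (the paper compresses this into one line, "necessarily an isomorphism since $\theta$ is"). You are also right that the crux is the affine component — the $K_0$ and $K_1$ parts invert by functoriality of $\pi_1$ and $\pi_0$ together with naturality of the canonical identifications. But your resolution of the crux has a genuine gap: you conclude ``positivity of $\Lambda_{\theta^{-1}}$'' from $S_{\theta^{-1}}(1)=1$, i.e.\ from unitality alone. Unitality of a linear map between $\Aff T(\cdot)$ spaces does not imply positivity: by Proposition \ref{prop:abelian-operator-systems}(1) one needs unital \emph{and contractive}, and contractivity of $\Lambda_{\theta^{-1}}=\Lambda_\theta^{-1}$ is precisely what is unavailable, since $\theta^{-1}$ is not assumed contractive and the inverse of a contraction need not be a contraction. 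As an abstract statement about ordered Banach spaces your inference is false: the map $T:\bR^2\to\bR^2$, $T(a,b)=\left(a,\tfrac{a+b}{2}\right)$, is a unital positive contractive bijection of $\Aff$ of the $1$-simplex whose inverse $T^{-1}(x,y)=(x,2y-x)$ is not positive. Note also that Lemma \ref{lem:tildeS-isometric}, applied to $\theta^{-1}$, makes the obstruction explicit: positivity of $\pm\Lambda_{\theta^{-1}}$ is \emph{equivalent} to contractivity of $\Lambda_{\theta^{-1}}$, so you cannot get the former for free; some input beyond unitality is required.

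The repair, within the paper's toolkit, is Theorem \ref{thm:trace-result}, proved immediately before this corollary and tailored to exactly this situation: for a contractive topological group isomorphism $\theta$ with $\theta(\bT)=\bT$, the induced map $T_\theta: T(B)\to T(A)$ is an affine homeomorphism, and by Theorem \ref{thm:affine-map-from-injection} one has $\pm\Lambda_\theta(f)=f\circ T_\theta$. Hence $(\pm\Lambda_\theta)^{-1}(g)=g\circ T_\theta^{-1}$ is manifestly unital and positive, and your sign bookkeeping (the same sign works for $\theta$ and $\theta^{-1}$, pinned down by $S_{\theta^{-1}}(1)=S_\theta^{-1}(1)$) then goes through. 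One further wrinkle, inherited from the paper's own statement rather than introduced by you: the corollary takes $\theta$ defined only on $U^0(A)$, whereas Theorem \ref{thm:KTu-morphism-from-injection} requires $\theta$ on all of $U(A)$ in order to induce the $K_1$-component via $\pi_0(U(\cdot))$; as literally stated, $\theta|_{U^0(A)}$ gives no map on $\pi_0(U(A))$, so the $K_1$ part needs $\theta$ to be defined on the full unitary group.
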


\begin{cor}\label{cor:reg-KT_u-morph}
Let $A,B$ be unital C*-algebras which are either $\cZ$-stable or of stable rank one and suppose that $T(A) \neq \emptyset$. Let $\theta: U(A) \to U(B)$ be a contractive injective homomorphism such that $\theta(\bT) = \bT$. Then there is an induced map
\[ KT_u(\theta): KT_u(A) \to KT_u(B). \]
\begin{proof}
C*-algebras which are $\cZ$-stable or have stable rank one satisfy the hypotheses of Theorem \ref{thm:KTu-morphism-from-injection} by \cite{Jiang97} and \cite{Rieffel87} respectively. So Theorem \ref{thm:KTu-morphism-from-injection} applies.
\end{proof}
\end{cor}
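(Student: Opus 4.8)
The plan is to deduce this directly from Theorem \ref{thm:KTu-morphism-from-injection}. Everything appearing in the hypotheses of that theorem is already assumed here---$A,B$ unital with $T(A) \neq \emptyset$, and $\theta: U(A) \to U(B)$ a contractive injective homomorphism with $\theta(\bT) = \bT$---\emph{except} the $K$-theoretic regularity condition (\ref{eq:k-regular-KTu-morph}), namely that the canonical maps $\pi_i(U(A)) \to K_{i-1}(A)$ and $\pi_i(U(B)) \to K_{i-1}(B)$ are isomorphisms for $i = 0,1$. So the entire content of the proof is to verify this regularity for each of the two classes of algebras, after which Theorem \ref{thm:KTu-morphism-from-injection} produces the desired morphism $KT_u(\theta)$ verbatim.

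Next I would recall precisely what (\ref{eq:k-regular-KTu-morph}) asks. Writing $K_{-1} = K_1$ via Bott periodicity, the two conditions are $\pi_0(U(A)) \simeq K_1(A)$ and $\pi_1(U^0(A)) \simeq K_0(A)$ (and likewise for $B$), via the canonical maps. At the fully stabilized level these are standard isomorphisms: $\pi_0(U_\infty(A)) \simeq K_1(A)$ and $\pi_1(U_\infty^0(A)) \simeq K_0(A)$ (see \cite[Chapter 11.4]{RordamKBook}). Thus the regularity condition is exactly the assertion that no matrix amplification is needed---that the inclusions $U(A) \into U_\infty(A)$ and $U^0(A) \into U_\infty^0(A)$ already induce isomorphisms on $\pi_0$ and $\pi_1$ respectively. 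Concretely, this says $K_1(A) = U(A)/U^0(A)$ (every $K_1$-class has a representative unitary in $A$ itself, with homotopy detecting the class) and that every class in $K_0(A)$ is realized by a loop of unitaries in $U^0(A)$.

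Finally I would invoke the two cited stability results to establish precisely these unamplified pictures. For stable rank one, Rieffel \cite{Rieffel87} gives $U(A)/U^0(A) \simeq K_1(A)$; since stable rank one passes to matrix algebras, the analogous stabilization yields $\pi_1(U^0(A)) \simeq K_0(A)$ as well. For $\cZ$-stable algebras, the corresponding unamplified description of $K$-theory follows from Jiang \cite{Jiang97}. In either case both $A$ and $B$ satisfy (\ref{eq:k-regular-KTu-morph}), so Theorem \ref{thm:KTu-morphism-from-injection} applies and delivers $KT_u(\theta): KT_u(A) \to KT_u(B)$. The only genuine difficulty is therefore not in this corollary but in these underlying regularity facts---the nontrivial point being that $K$-theory can be read off at the level of the unamplified unitary group---which is exactly what Rieffel's and Jiang's theorems provide.
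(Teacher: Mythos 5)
Your proposal is correct and follows exactly the paper's own argument: the corollary is a direct application of Theorem \ref{thm:KTu-morphism-from-injection}, with the regularity condition (\ref{eq:k-regular-KTu-morph}) supplied by Rieffel \cite{Rieffel87} in the stable rank one case and Jiang \cite{Jiang97} in the $\cZ$-stable case. Your unpacking of what the unamplified isomorphisms $\pi_0(U(A)) \simeq K_1(A)$ and $\pi_1(U^0(A)) \simeq K_0(A)$ mean is a helpful elaboration, but the underlying proof is the same.
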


\begin{remark}\label{rem:strict-order}
The strict ordering on $\Aff T(A)$ is given by $f \gg g$ if $f(\tau) > g(\tau)$ for all $\tau \in T(A)$. If $A,B$ are unital with $T(A) \neq \emptyset$ and $\theta:U^0(A) \to U^0(B)$ is a contractive injective homomorphism such that $\theta(\bT) = \bT$, then $\pm \Lambda_\theta: \Aff T(A) \to \Aff T(B)$ is a unital positive contraction by Lemma \ref{lem:tildeS-isometric} (again $\pm \Lambda_\theta$ is $\Lambda_\theta$ or $-\Lambda_\theta$ depending on which is positive). We moreover have that
\begin{equation}
\pm \Lambda_\theta(f) \gg \pm \Lambda_\theta(g) \iff f \gg g.
\end{equation}
Indeed, let us show that $f \gg 0$ if and only if its image is $\gg 0$. As $\pm \Lambda_\theta$ has the form $\pm \Lambda_\theta(\hat{a}) = \widehat{\pm S_\theta(a)}$, it suffices to show that if $\sigma(a) > 0$ for all $\sigma \in T(A)$, then $\tau(\pm S_\theta (a)) > 0$ for all $\tau \in T(B)$.
But this is trivial because $\tau \circ \pm S_{\theta}: A_{sa} \to \bR$ extends canonically to a tracial state $A \to \bC$ by Proposition \ref{prop:induced-aff-map}, so evaluating it against $a$ must give that it is strictly positive. 
\end{remark}

The above says the following: for certain C*-algebras, we can read off positivity in $K_0$, thinking of it as the fundamental group of the unitary group, from the strict positivity of the pre-determinant applied a the loop. Precisely, a non-zero element $x \in K_0(A)$, where $A$ is a unital, simple C*-algebra with strict comparison, is in the positive cone if and only if the corresponding loop $\xi_x$ satisfies $\tD_\tau(\xi_x) > 0$ for all $\tau \in T(A)$.

Although the following is known, for example by very strong results in \cite[Chapter 6]{AraMathieubook} pertaining to certain prime C*-algebras, we give the follow\hyp{}ing as a corollary by using $K$-theoretic classification results. 

\begin{cor}
Let $A,B$ be unital, separable, simple, nuclear $\cZ$-stable C*-algebras satisfying the UCT. Then $A \simeq B$ if and only if there is a contractive isomorphism $U(A) \simeq U(B)$.
\begin{proof}
Its clear that two isomorphic C*-algebras have isomorphic unitary groups. On the other hand, if $U(A) \simeq U(B)$, then since these C*-algebras are $\cZ$-stable, Corollary \ref{cor:KTu-isomorphism} applies. As $KT_u(\cdot)$ recovers the Elliott invariant, which is a complete invariant for the C*-algebras as in the statement of the theorem (by \cite[Corollary D]{CETWW21}, \cite{EGLN15,GongLinNiu20I,GongLinNiu20II} and the references therein), $A \simeq B$.
\end{proof}
\end{cor}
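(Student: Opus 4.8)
The plan is to prove both implications, with essentially all of the work in the converse. The forward direction is immediate: a $*$-isomorphism $\varphi : A \to B$ is unital and isometric, so its restriction $\varphi|_{U(A)} : U(A) \to U(B)$ is a group isomorphism that is isometric, hence contractive (and it carries $\bT 1_A$ onto $\bT 1_B$).

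For the converse, let $\theta : U(A) \to U(B)$ be a contractive topological group isomorphism. The strategy is to feed $\theta$ into the machinery of Section \ref{section:ugh-order-on-aff-classification}, specifically Theorem \ref{thm:KTu-morphism-from-injection}, which requires $\theta$ to be injective and contractive with $\theta(\bT) = \bT$, together with the $K$-regularity $\pi_i(U(\cdot)) \simeq K_{i-1}(\cdot)$ for $i = 0,1$. Injectivity and contractivity are part of the hypothesis, and the regularity is exactly what $\cZ$-stability buys via \cite{Jiang97}. The point I expect to matter most is that $\theta(\bT) = \bT$ need not be assumed here: since $A$ is simple, $Z(A) = \bC 1_A$, and because every element of a unital C*-algebra is a linear combination of unitaries, a unitary commuting with all of $U(A)$ commutes with all of $A$; hence the center of the group $U(A)$ is exactly $\bT 1_A$, and likewise $Z(U(B)) = \bT 1_B$. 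Any group isomorphism sends center to center, so automatically $\theta(\bT 1_A) = \bT 1_B$. This observation discharges the circle-to-circle hypothesis that the invariant-level statement (Corollary \ref{cor:KTu-isomorphism}) takes for granted.

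With the hypotheses verified, Theorem \ref{thm:KTu-morphism-from-injection} yields a $KT_u$-morphism $KT_u(\theta) : KT_u(A) \to KT_u(B)$, with the sign fixed (replacing $\Lambda_\theta$ and $K_0(\theta)$ by their negatives when $\theta|_\bT$ is $z \mapsto z^{-1}$) so that the $\Aff T$-component is unital and positive and $[1_A]_0 \mapsto [1_B]_0$. Because $\theta$ is a homeomorphism, each component of $KT_u(\theta)$ is bijective: $\pi_1(\theta)$ and $\pi_0(\theta)$ are isomorphisms, hence so are $K_0(\theta)$ and $K_1(\theta)$ after the regularity identifications, while $\Lambda_\theta$ is bijective with inverse $\Lambda_{\theta^{-1}}$ by Lemma \ref{lem:lipschitzSbounded}(3). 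Thus $KT_u(\theta)$ is an isomorphism, i.e. $KT_u(A) \simeq KT_u(B)$, which is Corollary \ref{cor:KTu-isomorphism}. Since $A,B$ are unital, separable, simple, nuclear, $\cZ$-stable and satisfy the UCT, the invariant $KT_u(\cdot)$ is the Elliott invariant, which is complete for this class by the classification theorem (\cite[Corollary D]{CETWW21}, \cite{EGLN15,GongLinNiu20I,GongLinNiu20II}), and therefore $A \simeq B$.

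Two points require care, and I would flag them explicitly. First, Theorem \ref{thm:KTu-morphism-from-injection} assumes $T(A) \neq \emptyset$, which fails in the purely infinite (Kirchberg) case; there $\Aff T(A)$ and $\Aff T(B)$ vanish and the pairings are zero, so the $KT_u$-isomorphism degenerates to a unit-preserving $K$-theory isomorphism assembled directly from $\pi_1(\theta) = K_0(\theta)$ and $\pi_0(\theta) = K_1(\theta)$, and the classification (now through Kirchberg--Phillips, subsumed in the cited results) still yields $A \simeq B$; one should first note $T(A) = \emptyset \iff T(B) = \emptyset$, which follows from the isomorphism. Second, the $\pm$ bookkeeping that guarantees $[1_A]_0 \mapsto [1_B]_0$ rather than $-[1_B]_0$ is the only genuinely delicate step, and it is already packaged inside Theorem \ref{thm:KTu-morphism-from-injection}.
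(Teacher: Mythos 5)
Your proof is correct and follows the same route as the paper's: feed the contractive isomorphism into the $KT_u$-machinery (Theorem \ref{thm:KTu-morphism-from-injection}, Corollary \ref{cor:KTu-isomorphism}) and then invoke the classification theorem via \cite[Corollary D]{CETWW21}. The genuine difference is that you discharge two hypotheses of Corollary \ref{cor:KTu-isomorphism} that the paper's one-line proof silently assumes. First, the circle condition: that corollary requires $\theta(\bT)=\bT$, which is not part of the statement being proved; your observation that simplicity forces $Z(U(A))=\bT 1_A$ (a unitary commuting with all unitaries commutes with $A=\mathrm{span}\, U(A)$, and $Z(A)=\bC 1_A$ by simplicity), combined with the fact that any group isomorphism maps center onto center, is exactly what is needed and does not appear in the paper. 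Second, the trace hypothesis: Corollary \ref{cor:KTu-isomorphism} assumes $T(A)\neq\emptyset$, yet the class in the statement contains traceless (Kirchberg) algebras; your reduction of that case to a unit-preserving $K$-theoretic isomorphism, still covered by the cited classification results, closes this gap as well. The one assertion you leave unproved is that $T(A)=\emptyset$ if and only if $T(B)=\emptyset$. This is true and provable with the paper's own tools, but it deserves an argument: if $\tau\in T(B)$, then $\tau\circ S_\theta$ is a bounded tracial functional on $A$ by Proposition \ref{prop:induced-aff-map}, and $\tau\circ S_\theta(1)=\tau(S_\theta(1))=\pm 1\neq 0$ by Lemma \ref{lem:inj-st-pm1}, whereas a traceless unital C*-algebra admits no nonzero bounded tracial functional (by Cuntz--Pedersen, $T(A)=\emptyset$ forces $A_{sa}/A_0=0$); so $T(B)\neq\emptyset$ implies $T(A)\neq\emptyset$, and the converse follows symmetrically since $\theta^{-1}$ is a group isomorphism with $S_{\theta^{-1}}=S_\theta^{-1}$. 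With that detail added, your write-up is a more complete proof than the one in the paper.
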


Using the state of the art classification of embeddings \cite{CGSTW23}, there is an enlarged invariant of $KT_u(\cdot)$ which is able to classify morphisms between certain C*-algebras. Any $KT_u$-morphisms automatically has a lift to this larger invariant \cite[Theorem 3.9]{CGSTW23}, and so under the assumption that the $KT_u$-morphism is faithful (i.e., the map $T(B) \to T(A)$ induced by the map $\Aff T(A) \to \Aff T(B)$ sends traces on $B$ to faithful traces on $A$), there is a *-homomorphism witnessing the $KT_u$-morphism. Therefore as a corollary of their main theorem, we have that for an abundance of C*-algebras, there is an (contractive) embedding of unitary groups if and only if there is an embedding of C*-algebras.

\begin{cor}\label{cor:ugh-subgroup-subalgebra}
Let $A$ be a unital, separable, simple nuclear C*-algebra sat\hyp{}isfying the UCT which is either $\cZ$-stable or of stable rank one, and $B$ be a unital, separable, simple, nuclear $\cZ$-stable C*-algebra. If there is a contractive injective homomorphism $\theta:U(A) \to U(B)$ such that $\theta(\bT) = \bT$, then there is an embedding $A \into B$. 
\begin{proof}
Assuming such a $\theta$ exists, it gives rise to a $KT_u$-morphism
\begin{equation}
KT_u(\theta): KT_u(A) \to KT_u(B)
\end{equation}
by Corollary \ref{cor:reg-KT_u-morph}. As $A,B$ are simple, the map $T_\theta: T(B) \to T(A)$ necessarily maps traces on $B$ to faithful traces on $A$, and thus the $KT_u$-morphism $KT_u(\theta)$ is ``faithful''. Therefore $KT_u(\theta)$ induces an embedding $A \into B$ by \cite[Theorem B]{CGSTW23}. 
\end{proof}
\end{cor}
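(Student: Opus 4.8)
The plan is to route the given unitary group homomorphism through the intermediate invariant $KT_u(\cdot)$ and then appeal to the classification of embeddings of \cite{CGSTW23}. First I would produce a $KT_u$-morphism from $\theta$. Since $A$ and $B$ are either $\cZ$-stable or of stable rank one, the canonical maps $\pi_i(U(\cdot)) \to K_{i-1}(\cdot)$ are isomorphisms for $i = 0,1$ by \cite{Jiang97} and \cite{Rieffel87}, so the $K$-theoretic regularity hypothesis of Theorem \ref{thm:KTu-morphism-from-injection} is in force. If $T(A) \neq \emptyset$ --- which holds automatically when $A$ has stable rank one, and in the $\cZ$-stable case whenever $A$ is not purely infinite --- then Corollary \ref{cor:reg-KT_u-morph} applies directly and yields a $KT_u$-morphism $KT_u(\theta) : KT_u(A) \to KT_u(B)$ whose trace part is $\pm\Lambda_\theta$ and whose $K_0$-part respects the pairing with $\rho_A, \rho_B$ by Corollary \ref{cor:pairing}. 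In the remaining purely infinite case $\Aff T(A) = 0$, so the trace part is vacuous and the construction reduces to assembling the induced maps $K_0(\theta), K_1(\theta)$ from $\pi_1(\theta)$ and $\pi_0(\theta)$ as in Theorem \ref{thm:KTu-morphism-from-injection}.

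The second step is to verify that $KT_u(\theta)$ is \emph{faithful}, i.e.\ that the induced continuous affine map $T_\theta : T(B) \to T(A)$ of Theorem \ref{thm:affine-map-from-injection} sends every $\tau \in T(B)$ to a faithful trace on $A$. This is where simplicity of $A$ is used, and it requires no computation: every tracial state on a unital simple C*-algebra is faithful, since its trace-kernel ideal $\{a : \tau(a^*a) = 0\}$ is a proper closed two-sided ideal and hence $\{0\}$. Thus each $\tau \circ T_\theta$ is automatically faithful, and the faithfulness hypothesis needed to invoke the embedding theorem holds for free (and is vacuous in the purely infinite case).

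Finally I would invoke \cite[Theorem B]{CGSTW23}: under the standing hypotheses --- separable, simple, nuclear, UCT, with the domain $\cZ$-stable or of stable rank one and the target $\cZ$-stable --- a faithful $KT_u$-morphism lifts to a unital $*$-homomorphism, producing the desired embedding $A \into B$.

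The analytic heavy lifting (the construction of $\Lambda_\theta$, its unitality and positivity up to a sign, and the assembly of the $K$-theory maps) has already been carried out in Corollary \ref{cor:reg-KT_u-morph}, Theorem \ref{thm:affine-map-from-injection} and Corollary \ref{cor:pairing}, so the only genuinely new content is the faithfulness observation. Accordingly, the main obstacle is not any estimate but rather confirming that the present data matches the precise input of \cite[Theorem B]{CGSTW23} --- in particular that the lift it provides is unital and that the purely infinite sub-case is correctly subsumed. Everything else is bookkeeping.
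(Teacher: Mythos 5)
Your argument in the case $T(A) \neq \emptyset$ is exactly the paper's proof: Corollary \ref{cor:reg-KT_u-morph} produces $KT_u(\theta)$, simplicity of $A$ makes every trace $\tau \circ T_\theta$ faithful (your trace-kernel-ideal argument correctly fills in what the paper merely asserts), and \cite[Theorem B]{CGSTW23} then gives the unital embedding. Your observations that stable rank one forces $T(A) \neq \emptyset$, and that $\cZ$-stability plus tracelessness forces pure infiniteness, are also correct.

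The genuine problem is your treatment of the purely infinite sub-case. You claim that when $\Aff T(A) = 0$ ``the trace part is vacuous and the construction reduces to assembling $K_0(\theta), K_1(\theta)$.'' That is false unless $\Aff T(B) = 0$ as well: a $KT_u$-morphism requires a \emph{unital} positive map $\Aff T(A) \to \Aff T(B)$, and the only linear map out of the zero space is the zero map, which is not unital when $T(B) \neq \emptyset$ --- the paper makes exactly this point in the discussion preceding Lemma \ref{lem:tildeS-isometric}. Worse, in the configuration $T(A) = \emptyset$, $T(B) \neq \emptyset$ the conclusion of the corollary would itself be false: every nonzero projection of a purely infinite simple $A$ is infinite, so its image under an embedding would be an infinite projection in $B$, contradicting the existence of a (necessarily faithful, by simplicity) trace on $B$. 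So this case must be \emph{ruled out}, not subsumed. It can be ruled out from the hypotheses: if $T(A) = \emptyset$ then $A_0 = A_{sa}$, so Proposition \ref{prop:induced-aff-map} gives $S_\theta(1) \in B_0$, while Lemma \ref{lem:inj-st-pm1} (using injectivity and $\theta(\bT) = \bT$) gives $S_\theta(1) = \pm 1$, and $\pm 1 \notin B_0$ whenever $T(B) \neq \emptyset$; hence $T(A) = \emptyset$ forces $T(B) = \emptyset$, and only then do your ``vacuous trace part'' reduction and the vacuity of faithfulness go through (one still needs $\pm K_0(\theta)[1_A]_0 = [1_B]_0$, which follows since the loop $t \mapsto e^{2\pi i t}1_A$ is sent to $t \mapsto e^{\pm 2\pi i t}1_B$). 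To be fair, the paper itself sidesteps this entirely --- its proof quotes Corollary \ref{cor:reg-KT_u-morph}, whose statement assumes $T(A) \neq \emptyset$ --- so you were right to flag the purely infinite sub-case as the point needing care; but the resolution you propose for it does not work as stated.
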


\section{A slight general linear variant}\label{section:ugh-general-linear-variants}

Here we briefly describe a slight general linear variant of some of the results above.
Unfortunately, the maps we get at the level of $A,B$ and complex-valued continuous affine functions are not $\bC$-linear in general (see Example \ref{example:non-C-linear}). 
In the presence of a continuous homomorphism $\theta: GL^0(A) \to GL^0(B)$, we have corresponding maps
\begin{equation}\label{eq:gch-commute}
\begin{tikzcd}
GL^0(A)/CGL^0(A) \arrow[r, "\simeq"] \arrow[d] & \left(A/\ov{[A,A]}\right)/\ov{\tD_A^1(\pi_1(U^0(A)))} \arrow[d] \\
GL^0(B)/CGL^0(B) \arrow[r, "\simeq"']          & \left(A/\ov{[A,A]}\right)/\ov{\tD_B^1(\pi_1(U^0(B)))}     .    
\end{tikzcd}
\end{equation}
Again, by modding out by algebraic commutator subgroups when $DGL^0(A) = \ker \Delta_A^1$ and $DGL^0(B) = \ker \Delta_B^1$ (both with respect to the general linear variant of the de la Harpe-Skandalis determinant, as originally introduced in \cite{dlHS84a}), instead of closures of derived groups, there is a purely algebraic variant of the above diagram:
\begin{equation}\label{eq:gh-commute}
\begin{tikzcd}
GL^0(A)/DGL^0(A) \arrow[r, "\simeq"] \arrow[d] &\left(A/\ov{[A,A]}\right)/\tD_A^1(\pi_1(GL^0(A))) \arrow[d] \\
GL^0(B)/DGL^0(B) \arrow[r, "\simeq"']          & \left(A/\ov{[B,B]}\right)/\tD_B^1(\pi_1(GL^0(B))).
\end{tikzcd}
\end{equation}

Thinking of $K_0(A)$ as the Grothendieck group of the semigroup of equivalence classes of idempotents and $K_0(A) \simeq \pi_1(GL_{\infty}^0(A))$, we would like to lift the maps on the right of (\ref{eq:gch-commute}) and (\ref{eq:gh-commute})
to a map
\begin{equation}
A/\ov{[A,A]} \to B/\ov{[B,B]}
\end{equation}
(the latter holding true when $A,B$ are C*-algebras whose determinant has appropriate kernel).

We can always achieve a bounded $\bR$-linear map. 

\begin{prop}\label{prop:gl-G-theta-lift}
Let $A,B$ be unital C*-algebras and $\theta: GL^0(A) \to GL^0(B)$ be a continuous group homomorphism. Then there is a continuous $\bR$-linear map
\begin{equation}
\tilde{G}_\theta: A/\ov{[A,A]} \to B/\ov{[B,B]}
\end{equation}
which lifts the maps on the right of  (\ref{eq:gch-commute}) and (\ref{eq:gh-commute}) (the latter holding when $DGL^0(A) = \ker\Delta_A^1$ and $DGL^0(B) = \ker\Delta_B^1$).
\begin{proof}

We define $G_\theta: A \to B$ given by
\begin{equation}
G_\theta(a) := \lim_n \frac{n}{2\pi i}\log \theta(e^{2\pi i \frac{a}{n}}).
\end{equation}
We note that the sequence on the right is eventually constant: choose $N$ large enough such that $n \geq N$ implies that 
\begin{equation}
\|\theta(e^{2\pi i \frac{a}{n}}) - 1\| < 1.
\end{equation}
We then have for $n \geq N$,
\begin{equation}
\begin{split}
\frac{n}{2\pi i}\log \theta(e^{2\pi i \frac{a}{n}}) &= \frac{n}{2\pi i}\log \theta(e^{2\pi i \frac{Na}{Nn}}) \\
&= \frac{n}{2\pi i}\log \theta(e^{2\pi i \frac{a}{Nn}})^N \\
&= \frac{Nn}{2\pi i}\log \theta(e^{2\pi i \frac{a}{Nn}}) \\
&= \frac{N}{2\pi i}\log \theta(e^{2\pi i \frac{a}{Nn}})^n \\
&= \frac{N}{2\pi i}\log \theta(e^{2\pi i \frac{na}{Nn}}) \\
&= \frac{N}{2\pi i}\log \theta(e^{2\pi i \frac{a}{N}}).
\end{split}
\end{equation}
To see that the map is additive, one can use the Lie product formula
\begin{equation}
e^{a+b} = \lim_k \left(e^{\frac{a}{k}}e^{\frac{b}{k}}\right)^k
\end{equation}
 (see for example \cite[Theorem 2.11]{HallBook}), along with the fact that $\theta$ is a continuous homomorphism. 
From here, it is clear that $G_\theta$ is continuous and $\bQ$-linear, hence $\bR$-linear. Moreover, one can use the formula 
\begin{equation}
e^{[a,b]} = \lim_k \left(e^{\frac{-a}{k}}e^{\frac{-b}{k}}e^{\frac{a}{k}}e^{\frac{b}{k}}\right)^{k^2}
\end{equation}
(a variation of the argument given in the proof of \cite[Theorem 2.11]{HallBook} will give the desired formula), together with $\theta$ being a continuous group homomorphism, to show that it respects commutation. Note that the same proof indeed shows that $S_\theta$ respects commutation, although we never explicitly used this.
From here, $G_\theta([A,A]) \subseteq [B,B]$, and consequently $G_\theta(\ov{[A,A]}) \subseteq \ov{[B,B]}$ by continuity.
Thus there is an induced $\bR$-linear map
\begin{equation}
\tilde{G}_\theta: A/\ov{[A,A]} \to B/\ov{[B,B]}. 
\end{equation}
The fact that $\tilde{G}_\theta$ is a lift of the maps on the right of (\ref{eq:gch-commute}) and (\ref{eq:gh-commute}) follows from the same arguments as in Propositions \ref{prop:S-theta-is-a-lift} and \ref{prop:S-theta-is-a-lift-H}.
\end{proof}
\end{prop}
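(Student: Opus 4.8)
The plan is to build the map at the level of the algebras first, establish that it is continuous, $\bR$-linear and commutator-preserving, and only then pass to the quotients and check the lifting property by a diagram chase. First I would define
\begin{equation}
G_\theta(a) := \lim_n \frac{n}{2\pi i}\log\theta\!\left(e^{2\pi i a/n}\right),
\end{equation}
and show the defining sequence is eventually constant: choosing $N$ with $\|\theta(e^{2\pi i a/n})-1\|<1$ for $n\geq N$ so that the principal logarithm is available, the homomorphism property of $\theta$ together with the identity $\log(X^m)=m\log X$ (valid near the identity) rewrites the $n$-th term as the $N$-th term, exactly as in the displayed chain of equalities. This makes $G_\theta$ well defined and records the fundamental relation $\theta(e^{2\pi i t a}) = e^{2\pi i t G_\theta(a)}$ for all $a\in A$: it holds for $t=1/n$ by construction, for rational $t$ by the homomorphism property, and for all real $t$ by continuity of both sides.

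Next I would prove linearity. Additivity follows from the Lie product formula $e^{x+y}=\lim_k (e^{x/k}e^{y/k})^k$: applying the continuous homomorphism $\theta$ to $e^{2\pi i(a+b)}=\lim_k(e^{2\pi i a/k}e^{2\pi i b/k})^k$ and using the fundamental relation on each factor converts it into $e^{2\pi i(G_\theta(a)+G_\theta(b))}$, whence $G_\theta(a+b)=G_\theta(a)+G_\theta(b)$ by uniqueness of the generator. With eventual constancy this gives $\bQ$-linearity, and continuity of $a\mapsto \frac{N}{2\pi i}\log\theta(e^{2\pi i a/N})$ (a composition of continuous maps on a neighbourhood on which a single $N$ works) upgrades this to $\bR$-linearity and boundedness.

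The delicate step, and the one I expect to be the main obstacle, is commutator preservation, for exactly the reason flagged at the start of the section: $G_\theta$ is only $\bR$-linear, so one cannot simply attach a factor $2\pi i$ to one of $a,b$ in the commutator identity $e^{[a,b]}=\lim_k (e^{-a/k}e^{-b/k}e^{a/k}e^{b/k})^{k^2}$ and control the remaining non-$2\pi i$ exponential through $G_\theta$. The clean way around this is to work with the companion map $L_\theta(a):=2\pi i\,G_\theta\!\left(a/(2\pi i)\right)$, which is again $\bR$-linear and satisfies the scalar-free relation $\theta(e^{ta})=e^{tL_\theta(a)}$ for all real $t$. Applying $\theta$ to the commutator identity then sends each factor $e^{\pm a/k},e^{\pm b/k}$ to $e^{\pm L_\theta(a)/k},e^{\pm L_\theta(b)/k}$, and continuity of $\theta$ identifies the limit as $e^{[L_\theta(a),L_\theta(b)]}$; rescaling $[a,b]\mapsto t[a,b]=[ta,b]$, comparing with $\theta(e^{t[a,b]})=e^{tL_\theta([a,b])}$, and differentiating at $t=0$ yields $L_\theta([a,b])=[L_\theta(a),L_\theta(b)]$. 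Since $L_\theta=m_{2\pi i}\circ G_\theta\circ m_{1/(2\pi i)}$ and each scalar multiplication $m_\lambda$ preserves $[A,A]$, $[B,B]$ and their closures, this transfers to $G_\theta([A,A])\subseteq [B,B]$, and hence $G_\theta(\overline{[A,A]})\subseteq\overline{[B,B]}$ by continuity.

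Finally I would pass to the induced $\bR$-linear map $\tilde{G}_\theta : A/\overline{[A,A]} \to B/\overline{[B,B]}$, $[a]\mapsto[G_\theta(a)]$, and verify that it lifts the maps on the right of (\ref{eq:gch-commute}) and (\ref{eq:gh-commute}). This is routine and parallels Propositions \ref{prop:S-theta-is-a-lift} and \ref{prop:S-theta-is-a-lift-H}: writing the quotient maps as $\delta_A^1([e^{2\pi i a}])=[\hat a]$ together with its general linear analogue and $\tilde\theta([u]):=[\theta(u)]$, the same computation as there shows that the map induced between the quotients agrees with the one induced by $\tilde{G}_\theta$, the Hausdorffized case following by further composing with the canonical surjections onto the closures exactly as in Proposition \ref{prop:S-theta-is-a-lift-H}.
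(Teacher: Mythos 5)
Your proposal is correct and takes essentially the same route as the paper's own proof: the same definition of $G_\theta$ via an eventually constant sequence of logarithms, the Lie product formula for additivity and hence $\bQ$- then $\bR$-linearity, the formula $e^{[a,b]}=\lim_k\bigl(e^{-a/k}e^{-b/k}e^{a/k}e^{b/k}\bigr)^{k^2}$ for commutator preservation, and the same diagram chase as in Propositions \ref{prop:S-theta-is-a-lift} and \ref{prop:S-theta-is-a-lift-H} for the lifting claim. Your auxiliary map $L_\theta(a)=2\pi i\,G_\theta\bigl(a/(2\pi i)\bigr)$, satisfying $\theta(e^{ta})=e^{tL_\theta(a)}$, is a careful (and correct) way of handling the $2\pi i$ rescaling issue arising from the mere $\bR$-linearity of $G_\theta$ --- a detail the paper's terse ``respects commutation'' step leaves implicit --- but it is a refinement of the same argument rather than a different approach.
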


\begin{remark}
We note that $G_\theta$ could have also been defined in the same manner as $S_\theta$. In particular, there is a correspondence
\begin{equation}\label{eq:G-theta-correspondence}
\theta(e^{2\pi i t a}) = e^{2\pi i t G_\theta(a)}, t \in \bR.
\end{equation}
One can use this to show that $\theta(\ker \Delta_A^1) \subseteq \ker \Delta_B^1$ as in Proposition \ref{prop:pre-det-commuting-diagrams} by using $G_\theta$ in place of $S_\theta$, along with the fact that $G_\theta(\ov{[A,A]}) \subseteq \ov{[B,B]}$. So there will always be a general linear variant of the commuting diagram (\ref{eq:h-commute-pre-det}). 
\end{remark}

Let us use (\ref{eq:G-theta-correspondence}) to show that the maps $G_\theta$ and $\tilde{G}_\theta$ in Proposition \ref{prop:gl-G-theta-lift} are not always $\bC$-linear.

\begin{example}\label{example:non-C-linear}
Consider $A = B = \bC$ and $\theta: \bC^\times \to \bC^\times$ given by $\theta(z) = |z|^{(\alpha + \beta i)}z^n$ where $\alpha,\beta \in \bR$ and $n \in \bZ$. It is easy to see that $\theta$ is a continuous group homomorphism. However, the map $G_\theta$ is not $\bC$-linear. Indeed, we have that
\begin{equation}
\begin{split}
\theta(e^{2\pi i t(a+bi)}) &= |e^{2\pi i t(a + bi)}|^{\alpha + \beta i}e^{2\pi i t n(a + bi)} \\
&= (e^{-2\pi t b})^{\alpha + \beta i}e^{2\pi i t n(a + bi)} \\
&= e^{-2\pi t b\alpha}e^{-2\pi i t\beta i b}e^{2\pi i t n(a + bi)} \\
&= e^{2\pi i t((na - \alpha b) + i(n - \beta)b}.
\end{split}
\end{equation}
In particular, thinking of $\bC$ as $\bR^2$ with 1 and $i$ corresponding to the basis vectors $(1,0)$ and $(0,1)$ respectively, we have that $G_\theta: \bR^2 \to \bR^2$ is the map
\begin{equation}
G_\theta\begin{pmatrix}
a \\ b
\end{pmatrix} = \begin{pmatrix}
n & -\alpha \\ 0 & n - \beta
\end{pmatrix}\begin{pmatrix}
a \\ b
\end{pmatrix}.
\end{equation}
In this example, the map $G_\theta$ is $\bC$-linear if and only if $\alpha = \beta = 0$. We do note, however, that $\theta$ sends unitaries to unitaries and $\theta|_{\bT}(z) = z^n$.
\end{example}

In general, it is clear that if $\theta: GL(A) \to GL(B)$ sends unitaries to unitaries, then we can use techniques in Section \ref{section:ugh-order-on-aff-classification} to get maps between spaces of continuous affine functions on the trace simplices. If one had that $\theta$ was the restriction of a *-homomorphism or a conjugate-linear *-homomorphism, then this would be true. 

\section{Final remarks and open questions}\label{section:ugh-final-remarks}

An alternate way to construct the map $\Lambda_\theta$, using duality of traces, is as follows. Denote by $\fT_s(A)$ the set of all tracial functionals on $A$. Suppose that $A,B$ are unital C*-algebras with $T(A),T(B) \neq \emptyset$ and $\theta: U^0(A) \to U^0(B)$ is a continuous homomorphism. Define, for $a \in A_{sa}$ and $\tau \in \fT_s(B)$,
\begin{equation}\label{eq:F-tau-def}
F(\tau)(a) := \lim_{n \to \infty} \tau\left(\frac{n}{2\pi i}\log \theta(e^{2\pi i \frac{a}{n}})\right).
\end{equation}

\begin{prop}
For $\tau \in \fT_s(B)$, the map $F(\tau): A_{sa} \to \bR$ is a well-defined, bounded, self-adjoint, tracial functional. Moreover, $F: \fT_s(B) \to \fT_s(A)$ given by $\tau \mapsto F(\tau)$ is a bounded $\bR$-linear map. 
\end{prop}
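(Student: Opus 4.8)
The plan is to recognize that the limit defining $F(\tau)$ is, for self-adjoint arguments, nothing but $\tau \circ S_\theta$, and then to harvest all of the asserted properties from results already established. First I would fix $a \in A_{sa}$ and unwind the expression inside the limit. By the defining correspondence of $S_\theta$ in Lemma \ref{lem:stone-theorem-cons}, we have $\theta(e^{2\pi i a/n}) = e^{2\pi i S_\theta(a)/n}$ for every $n$. Since $\|S_\theta(a)/n\| \to 0$, for all sufficiently large $n$ we have $\|e^{2\pi i S_\theta(a)/n} - 1\| < 1$, so the principal logarithm returns the obvious value and
\begin{equation}
\frac{n}{2\pi i}\log\theta\!\left(e^{2\pi i a/n}\right) = \frac{n}{2\pi i}\cdot\frac{2\pi i\, S_\theta(a)}{n} = S_\theta(a).
\end{equation}
Thus the sequence is eventually constant (this is exactly the eventual-constancy argument of Proposition \ref{prop:gl-G-theta-lift}, with $G_\theta$ restricted to $A_{sa}$ coinciding with $S_\theta$), the limit trivially exists, and $F(\tau)(a) = \tau(S_\theta(a))$. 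This identification is the only real content; everything below is bookkeeping.

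With $F(\tau) = \tau \circ S_\theta$ on $A_{sa}$ in hand, the individual claims follow. Well-definedness is immediate from eventual constancy. Boundedness follows from boundedness of $S_\theta$ (Lemma \ref{lem:sa-bijective}), since $|F(\tau)(a)| = |\tau(S_\theta(a))| \le \|\tau\|\,\|S_\theta\|\,\|a\|$. As $a \in A_{sa}$ forces $S_\theta(a) \in B_{sa}$ and $\tau$ is real-valued on $B_{sa}$, we get $F(\tau)(a) \in \bR$, i.e. $F(\tau)$ is self-adjoint. Traciality is precisely Proposition \ref{prop:induced-aff-map} applied with the real Banach space $E = \bR$ and the bounded trace $\tau|_{B_{sa}} : B_{sa} \to \bR$: that result gives that $\tau \circ S_\theta : A_{sa} \to \bR$ is again a bounded trace. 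Finally I would extend $F(\tau)$ from $A_{sa}$ to all of $A$ by complexification, setting $F(\tau)(a) := F(\tau)(\mathrm{Re}\,a) + iF(\tau)(\mathrm{Im}\,a)$, which yields a self-adjoint tracial functional on $A$, so $F(\tau) \in \fT_s(A)$.

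For the second assertion, $\bR$-linearity of $F$ is transparent from the formula $F(\tau) = \tau \circ S_\theta$: precomposition with the fixed map $S_\theta$ commutes with the pointwise vector-space operations on functionals. Boundedness of $F$ follows from the same estimate, giving $\|F(\tau)\| \le \|S_\theta\|\,\|\tau\|$ and hence $\|F\| \le \|S_\theta\| < \infty$. I expect no genuine obstacle here: the whole proposition is the dual (trace-side) incarnation of the map $\Lambda_\theta$, since for $\tau \in T(B)$ one has $\Lambda_\theta(\hat a)(\tau) = \widehat{S_\theta(a)}(\tau) = \tau(S_\theta(a)) = F(\tau)(a)$. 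The only point demanding any care is the initial identification of the limit with $S_\theta(a)$, i.e. checking that the defining sequence stabilizes at $S_\theta(a)$ for self-adjoint arguments; once this is noted, the statement reduces entirely to Proposition \ref{prop:induced-aff-map} and the boundedness of $S_\theta$.
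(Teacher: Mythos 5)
Your proof is correct, but it takes a genuinely different route from the paper's. The paper disposes of this proposition in one line by appealing to the arguments of Proposition \ref{prop:gl-G-theta-lift}: eventual constancy of the defining sequence via the power trick, additivity via the Lie product formula, and traciality via the commutator formula $e^{[a,b]} = \lim_k \left(e^{-a/k}e^{-b/k}e^{a/k}e^{b/k}\right)^{k^2}$. You instead observe that, since the limit is only ever applied to self-adjoint arguments, the Stone's theorem correspondence $\theta(e^{2\pi i t a}) = e^{2\pi i t S_\theta(a)}$ of Lemma \ref{lem:stone-theorem-cons} identifies the sequence as eventually \emph{exactly} $S_\theta(a)$, so that $F(\tau) = \tau \circ S_\theta$ on $A_{sa}$; every claimed property then falls out of results already proved in Section \ref{section:ugh-ctugh-and-traces}, namely boundedness of $S_\theta$ (Lemma \ref{lem:sa-bijective}) and traciality of $\tau \circ S_\theta$ (Proposition \ref{prop:induced-aff-map} with $E = \bR$). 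This is arguably cleaner: it avoids the Lie-theoretic formulas entirely, and it makes explicit the identity $F(\tau)(a) = \tau(S_\theta(a))$, which is precisely what the paper's subsequent remark needs when identifying the restriction of the dual map $F^*$ to $A_{sa}/A_0$ with $\Lambda_\theta$. What the paper's route buys in exchange is uniformity with the general linear variant, where no $S_\theta$ is available and the Lie-product and commutator formulas are genuinely needed. One small imprecision in your write-up: the condition $\|e^{2\pi i S_\theta(a)/n} - 1\| < 1$ by itself does not guarantee that the principal logarithm inverts the exponential (a self-adjoint exponent with spectrum containing points of $2\pi\bZ$ gives a counterexample); what you need, and do have since $\|S_\theta(a)\|/n \to 0$, is that the spectrum of $2\pi S_\theta(a)/n$ lies in $(-\pi,\pi)$ for all large $n$. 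This is a cosmetic fix, not a gap.
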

\begin{proof}
Using the same arguments as in Proposition \ref{prop:gl-G-theta-lift}, it is clear the formula (\ref{eq:F-tau-def}) is well-defined and gives rise to a bounded $\bR$-linear map $F: \fT_s(B) \to \fT_s(A)$. 
\end{proof}

One can identify $\left(A_{sa}/A_0\right)^* \simeq \fT_s(A)$ (see, for example \cite{CuntzPedersen79}), and so we can use duality to get a map $\tilde{\Lambda}_\theta:= F^*: \fT_s(A)^* \to \fT_s(B)^*$ and restrict it to the dense set $A_{sa}/A_0$. One can check that the image lies in $B_{sa}/B_0$ and that the restriction is just the map $\Lambda_\theta$ that we got before. 

We finish by listing some open problems.

\begin{enumerate}
\item There are classes where topological isomorphisms between $U(A)$ and $U(B)$ (or even $U^0(A)$ and $U^0(B)$) come from *-isomorphisms or anti-*-isomorphisms. For example, if $A,B$ are prime traceless C*-algebras containing full square zero elements, this is true by results in \cite{ChandRobert23}.

If $A$ is a unital, separable, nuclear C*-algebra satisfying the UCT and $B$ is a unital simple separable nuclear $\cZ$-stable C*-algebra, then unital embeddings $A \into B$ are classified by an invariant $\uv{K}T_u(\cdot)$ which is an enlargement of $KT_u$ \cite{CGSTW23}. Thus any isometric unitary group homomorphism $U(A) \to U(B)$ will give a $KT_u$-morphism $KT_u(\theta)$ and therefore there will be an embedding $\phi:A \into B$ such that $KT_u(\phi) = KT_u(\theta)$. However it is not clear that $\phi$ satisfies $\phi|_{U(A)} = \theta$. More generally though -- in the tracial setting -- are there continuous group homomorphisms which do not have lifts to *-homomorphisms or anti-*-homomorphisms?

Note that in \cite[Chapter 6]{AraMathieubook}, Lie isomorphisms between certain C*-algebras are shown to be the sum of a Jordan *-isomorphism and a center-valued trace. Is there a result for general (injective) Lie homomorphisms between certain classes of C*-algebras?

\item This enlargement of $KT_u$ discussed in \cite{CGSTW23} contains $K$-theory with coefficients (along with appropriate pairing maps -- the Bosckstein maps discussed in \cite{SchochetIV}). So we ask: do continuous group homo\hyp{}morphisms induce maps between $K$-theory with coefficients?

\item For a general continuous homomorphism $\theta: U^0(A) \to U^0(B)$, does the norm $\|S_\theta\|$ determine a Lipschitz constant for $\theta$? We clearly have that
\begin{equation}
\|S_\theta\| \leq \inf\{ K \mid \theta \text{ is }K\text{-Lipschitz}\}
\end{equation}
by Lemma \ref{lem:lipschitzSbounded}. Is this equality?
\item For $A$ simple (or prime), is it true that any continuous injective homo\hyp{}morphism $\theta: U^0(A) \to U^0(B)$ is isometric? Contractive? What if $B$ is simple (or prime)?

\item In the initial draft of this paper, we claimed that any continuous group homomorphism in Proposition \ref{prop:gl-G-theta-lift} gave rise to a $\bC$-linear $\tilde{G}_\theta$. This is clearly false by Example \ref{example:non-C-linear}. Is there a way to guarantee that the map $\tilde{G}_\theta$ is $\bC$-linear? Or can one alter it accordingly for this to happen? Or alter it to get a map between unitary groups, which would then allow one to use the results in Section \ref{section:ugh-order-on-aff-classification}? Maybe if one starts with an injective, contractive group homomorphism $GL^0(A) \to GL^0(B)$ which sends $\bC^\times$ to $\bC^\times$, one can say something. 
\end{enumerate}

  \bibliographystyle{amsalpha}
  \bibliography{biblio}

\end{document}